\newtheorem{theorem}{Theorem}[section]
\newtheorem{lemma}[theorem]{Lemma}
\newtheorem{proposition}[theorem]{Proposition}
\newtheorem{corollary}[theorem]{Corollary}
\theoremstyle{definition}
\newtheorem{definition}[theorem]{Definition}
\theoremstyle{remark}
\newtheorem{remark}[theorem]{Remark}
\numberwithin{equation}{section}
\begin{document}

\title[ Generalized Morrey spaces]
{Schr\"odinger type operators on generalized Morrey spaces}

\author[Pengtao Li]{Pengtao\ Li}
\address{College of Mathematics, Qingdao University, Qingdao, Shandong 266071, China}

\email{ptli@qdu.edu.cn}

\author[Xin Wan]{Xin Wan}
\address{Department of Mathematics, Shantou University, Shantou, Guangdong 515063, China.}
\email{13xwan@stu.edu.cn}

\author[Chuangyuan Zhang]{Chuangyuan Zhang}
\address{Department of Mathematics, Shantou University, Shantou, Guangdong 515063, China.}
\email{12cyzhang@stu.edu.cn}


\subjclass[2010]{Primary 42B35, 42B20.}

\date{}

\dedicatory{}

\keywords{Generalized Morrey spaces, Schr\"odinger operator, Commutator, reverse H\"older class.}

\begin{abstract}
In this paper we introduce a class of generalized Morrey spaces associated with Schr\"odinger operator $L=-\Delta+V$. Via a pointwise estimate, we obtain the boundedness of the operators $V^{\beta_{2}}(-\Delta+V)^{-\beta_{1}}$ and their dual operators on these Morrey spaces.
\end{abstract}

\maketitle \tableofcontents \pagenumbering{arabic}
\section{Introduction}

The investigation of Schr\"odinger operators on the Euclidean space
$\mathbb{R}^{n}$ with nonnegative potentials which belong to
the reverse H\"older class has attracted attention of many authors. Shen \cite{Sh}  studied the Schr\"odinger
operator $L=-\Delta+ V$, assuming the nonnegative potential $V$ belongs to the reverse H\"older class $B_{q}, q\geq\frac{n}{2}$.
In \cite{Sh}, Shen proved the $L^{p}$-boundedness of the operators $(-\Delta+V)^{i\gamma}$, $\nabla^{2}(-\Delta+V)^{-1}$, $\nabla(-\Delta+V)^{-1/2}$ and $\nabla(-\Delta+V)^{-1}\nabla$.
For further information, we refer the reader to Guo-Li-Peng \cite{GLP}, Liu \cite{LiuY-1}, Liu-Huang-Dong \cite{LHD}, Liu-Wang-Dong \cite{LWD}, Tang-Dong \cite{TD}, Yang-Yang-Zhou \cite{YYZ, YYZ2} and the references therein.

The purpose of this paper is to generalize the results of  Shen \cite{Sh} and Sugano \cite{Su} to a class of  Morrey spaces associated with $L$, denoted by $L_{\alpha,\theta,V}^{p,q,\lambda}(\mathbb{R}^{n})$. See Definition \ref{def-HMorrey} below. The significance of these spaces is that for particular choices of the
parameters $p$, $q$, $\lambda$, $\theta$ and $\alpha$, one obtains many classical function spaces. In particular,
\begin{table}[h]
\centering
\begin{tabular}{|l|c|} \hline

 $\theta=0, \alpha=0, p=q$, $0<\lambda<1$,
  & Morrey space $L^{p,\lambda}(\mathbb{R}^{n})$ \cite{P}\\ \hline
  $\theta=0$, $p=q$, $0<\lambda<1$,
  & Morrey type space $L_{\alpha,V}^{p,\lambda}(\mathbb{R}^{n})$ \cite{TD} \\ \hline
  $\alpha=\lambda=0$,  $\theta\in\mathbb{R}$, $0<p,q<\infty$,
  & Herz spaces $K_{p}^{\theta,q}$ \cite{Herz} \\ \hline
  $\alpha=0, \lambda\geq0$,  $\theta\in\mathbb{R}$, $0<p,q<\infty$,
  & Morrey-Herz spaces $MK_{p,q}^{\theta,\lambda}$ \cite{CC, SL} \\ \hline
\end{tabular}
\end{table}\par

In Section \ref{sec2}, let $T$ be one of the Schrodinger type operators $\nabla(-\Delta+V)^{-1}\nabla$, $\nabla(-\Delta+V)^{-1/2}$ and $(-\Delta+V)^{-1/2}\nabla$. With the help of the $L^{p}$-boundedness of $T$, it is easy to verify that $T$ is bounded on $L_{\alpha,\theta, V}^{p,q, \lambda}(\mathbb{R}^{n})$. For $b\in BMO(\mathbb{R}^{n})$, we can also obtain the boundedness of the commutator $[b, T]$ on $L_{\alpha,\theta, V}^{p,q, \lambda}(\mathbb{R}^{n})$. See Theorem \ref{th-CZ} \& \ref{th-commutator}. For  $\theta=0$, $p=q$ and $0<\lambda<1$, $L_{\alpha,0, V}^{p,p, \lambda}(\mathbb{R}^{n})$ becomes the spaces $L_{\alpha,V}^{p,\lambda}(\mathbb{R}^{n})$ introduced by Tang-Dong \cite{TD}. Hence, the results are  generalizations of \cite[Theorems 1 \& 2]{TD}.

In recent years, the fractional integral operator $I_{\alpha}=(-\Delta+V)^{-\alpha}$ has been studied extensively. We refer to Duong-Yan \cite{DY}, Jiang \cite{J}, Tang-Dong \cite{TD} and Yang-Yang-Zhou \cite{YYZ} for details. Suppose that $V\in B_{s}, s\geq\frac{n}{2}$. For $0\leq\beta_{2}\leq\beta_{1}<\frac{n}{2}$, let
\begin{equation}\nonumber
\begin{cases}
T_{\beta_{1},\beta_{2}}=:V^{\beta_{2}}(-\Delta+V)^{-\beta_{1}},\\
T^{\ast}_{\beta_{1},\beta_{2}}=:(-\Delta+V)^{-\beta_{1}}V^{\beta_{2}}.
\end{cases}
\end{equation}
Sugano \cite{Su} obtained the weighted estimates for $T_{\beta_{1}, \beta_{2}}, T^{\ast}_{\beta_{1},\beta_{2}}, 0<\beta_{2}\leq\beta_{1}<1$.
If $\beta_{2}=0$, we can see that $T_{\beta_{1},0}=I_{\beta_{1}}$. So $T_{\beta_{1},\beta_{2}}$ and $T_{\beta_{1},\beta_{2}}^{\ast}$ can be seen as generalizations of $I_{\alpha}$. Moreover, for $(\beta_{1}, \beta_{2})=(1,1)$ and $(1/2,1/2)$, $T_{1,1}^{\ast}=(-\Delta+V)^{-1}V$ and $T_{1/2,1/2}^{\ast}=(-\Delta+V)^{-1/2}V^{1/2}$, respectively, which are studied by Shen \cite{Sh} thoroughly.
In Section \ref{sec3}, assume that $1<p_{1}<\infty$, $1<p_{2}<{s}/{\beta_{2}}$ and $1<q<\infty$. If the index $(q, \beta_{1},\beta_{2},\lambda,\alpha,\theta)$ satisfies:
\begin{equation}\nonumber
\begin{cases}
{1}/{p_{2}}={1}/{p_{1}}-{2(\beta_{1}-\beta_{2})}/{n},\\
\alpha\in(-\infty,0]\ \&\ \lambda\in(0,n),\\
{\lambda}/{q}-{1}/{p_{1}}+{2\beta_{1}}/{n}<\theta<{\lambda}/{q}+1-{1}/{p_{1}},
\end{cases}
\end{equation}
   we prove that  $T_{\beta_{1},\beta_{2}}$ is bounded from $L^{p_{1},q,\lambda}_{\alpha,\theta, V}(\mathbb{R}^{n})$ to $L^{p_{2},q,\lambda}_{\alpha,\theta,V}(\mathbb{R}^{n}).$
Specially, we know that $(-\Delta+V)^{-1}V$ and $(-\Delta+V)^{-1/2}V^{1/2}$ are bounded on $L_{\alpha,\theta, V}^{p,q, \lambda}(\mathbb{R}^{n})$. See Theorems \ref{th-operator} \& \ref{th-dual-operator} for the details.

In the research of harmonic analysis and partial differential equations, the commutators play an important role. If $T$ is a Calder\'on-Zygmund operator, $b\in BMO(\mathbb{R}^{n})$, the $L^{p}-$boundedness of $[b,T ]$ was first
discovered by Coifman-Rochberg-Weiss \cite{CRW}. Later, Str\"omberg [4] gave a simple proof, adopting the idea of
relating commutators with the sharp maximal operator of Fefferman and Stein. In 2008, Guo-Li-Peng \cite{GLP} introduced a condition $H(m)$ and obtained that $L^{p}-$boundedness of the commutator of Riesz transforms associated with $L$, where $b\in BMO(\mathbb{R}^{n})$. For further information, we refer to Liu \cite{Liu2}, Liu-Huang-Dong \cite{LHD}, Liu-Wang-Dong \cite{LWD}, Yang-Yang-Zhou \cite{YYZ2} and the references therein.

In Section \ref{sec-5}, by the boundedness of $I_{\alpha}$ and $(-\Delta+V)^{-\beta}V^{-\beta}$, we can deduce that the commutators $[b, T_{\beta_{1},\beta_{2}}]$ and $[b, T^{\ast}_{\beta_{1},\beta_{2}}]$
are bounded from $L^{p_{1}}(\mathbb{R}^{n})$ to $L^{p_{2}}(\mathbb{R}^{n})$. See Theorem \ref{th-bdd-Tbeta}.  Theorem \ref{th-bdd-Tbeta} together with Lemmas \ref{le3.2} \& \ref{le3.3} can be used to prove that the commutators $[b, T_{\beta_{1},\beta_{2}}]$ and $[b, T^{\ast}_{\beta_{1},\beta_{2}}]$ are bounded from $L_{\alpha,\theta,V}^{p_{1},q,\lambda}(\mathbb{R}^{n})$ to $L_{\alpha,\theta,V}^{p_{2},q,\lambda}(\mathbb{R}^{n})$, respectively. See Theorems \ref{th-5.2} \& \ref{th-5.3}.
\begin{remark}
Unlike the setting of the Lebesgue spaces, it is well-known that the dual of $L^{p,\lambda}(\mathbb{R}^{n})$ is not $L^{p',-\lambda}(\mathbb{R}^{n})$.   Hence, after obtaining Theorem \ref{th-operator}, we can not deduce Theorem \ref{th-dual-operator} via the method of duality used by Guo-Li-Peng \cite{GLP}.
\end{remark}

\section{Preliminaries}

\subsection{Schr\"odinger operator and the auxiliary function}In this paper, we consider the Schr\"{o}dinger differential operator $L=-\Delta+V$ on $\mathbb{R}^{n}$, $n\geq 3$, where $V$ is a nonnegative potential belonging to the reverse H\"{o}lder class $B_{s}, s\geq\frac{n}{2},$ whch is defined as follows.
\begin{definition} Let $V$ be a nonnegative function.
\item{(i)} We say  $V\in B_{s}, s>1$, if there exists $C>0$ such that for every ball $B\subset\mathbb{R}^{n}$, the reverse H\"{o}lder inequality
$$\left(\frac{1}{|B|}\int_{B}V^{s}(x)dx\right)^{\frac{1}{s}}\lesssim\left(\frac{1}{|B|}\int_{B}V(x)dx\right)$$
holds.
\item{(ii)} We say $V\in B_{\infty}$ if there exists a constant $C$ such that for every ball $B\subset \mathbb{R}^{n}$,
$$\|V\|_{L^{\infty}(B)}=\frac{1}{|B|}\int_{B}V(x)dx.$$
\end{definition}
\begin{remark}
Assume $V\in B_{s}$, $1<s<\infty$. Then $V(y)dy$ is a doubling measure. Namely, there exists a constant $C_{0}$ such that for any $r>0$ and $y\in \mathbb{R}^{n}$,
\begin{equation}\label{eq-double}
\int_{B(x,2r)}V(y)dy\lesssim C_{0}\int_{B(x,r)}V(y)dy.
\end{equation}
\end{remark}

\begin{definition}(Shen \cite{Sh})
For $x\in\mathbb{R}^{n}$, the function $m_{V}(x)$ is defined as
\begin{eqnarray*}
\frac{1}{m_{V}(x)}=:\sup\left\{ r>0 :\frac{1}{r^{n-2}}\int_{B(x,r)}V(y)dy\leq 1\right\}.
\end{eqnarray*}
\end{definition}
\begin{remark}
The function $m_{V}$ reflects the scale of $V$ essentially, but behaves better. It is deeply studied in Shen \cite{Sh} and  play a crucial role in our proof. We list a property of $m_{V}$ which will be used in the sequel and refer the reader to Guo-Li-Peng \cite {GLP} for the details.
\end{remark}

We state some notations and properties of  $m_{V}$.
\begin{lemma}\label{le2.1}
{\rm (\cite[Lemma 1.4]{Sh})} Suppose that $V \in B_{s}$ with $s\geq \frac{n}{2}$. Then there exist positive constants $C$ and $k_{0}$ such that
\begin{itemize}
\item[(a)]  if $|x-y|\leq \frac{C}{m_{V}(x)}$, $m_{V}(x)\sim m_{V}(y)$;
\item[(b)] $m_{V}(y)\lesssim(1+|x-y| m_{V}(x))^{k_{0}}m_{V}(x);$
\item[(c)] $m_{V}(y)\geq {Cm_{V}(x)}/\{1+|x-y|m_{V}(x)\}^{k_{0}/(k_{0}+1)}.$
\end{itemize}
\end{lemma}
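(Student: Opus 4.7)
My plan is to work with the auxiliary function $\varphi(x,r):=r^{2-n}\int_{B(x,r)}V(y)\,dy$, whose defining feature is $\varphi(x,1/m_V(x))=1$. Two quantitative ingredients drive everything. The reverse H\"older hypothesis $V\in B_{s}$ with $s\geq n/2$, combined with H\"older's inequality, gives for $r_{1}<r_{2}$ that $\int_{B(x,r_{1})}V\lesssim (r_{1}/r_{2})^{n-n/s}\int_{B(x,r_{2})}V$; since $2-n/s>0$, this makes $\varphi(x,\cdot)$ essentially increasing at rate $r^{2-n/s}$, continuous, and forced to cross the level $1$ uniquely. Complementarily, the doubling property \eqref{eq-double} of the measure $V\,dy$ supplies the upper control $\int_{B(x,r_{2})}V\lesssim (r_{2}/r_{1})^{\gamma}\int_{B(x,r_{1})}V$ for $r_{2}>r_{1}$, with $\gamma:=\log_{2}C_{0}$. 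Together these confirm that $m_V$ is well defined and that $\varphi(x,r)$ clusters around $1$ at the scale $r\sim 1/m_V(x)$.

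For (a), I would set $r_{0}:=1/m_V(x)$ and assume $|x-y|\leq Cr_{0}$. From the inclusion $B(x,r_{0})\subset B(y,(C+1)r_{0})$ one has $\int_{B(y,(C+1)r_{0})}V\geq r_{0}^{n-2}$, hence $\varphi(y,(C+1)r_{0})\geq (C+1)^{2-n}$; the quantified monotonicity of $\varphi(y,\cdot)$ then lifts $\varphi$ above $1$ at some scale comparable to $r_{0}$, yielding $m_V(y)\gtrsim m_V(x)$. Reversing, $B(y,\varepsilon r_{0})\subset B(x,(C+\varepsilon)r_{0})$ combined with the doubling estimate gives $\varphi(y,\varepsilon r_{0})\lesssim \varepsilon^{2-n}$, which is $\leq 1$ once $\varepsilon$ is chosen small in terms of $C$ and $C_{0}$; this forces $m_V(y)\lesssim m_V(x)$ and completes (a).

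For (b) and (c), part (a) reduces matters to the regime $R:=|x-y|\geq r_{0}$. Setting $R_{y}:=1/m_V(y)$ so that $R_{y}^{n-2}=\int_{B(y,R_{y})}V$ by definition, two complementary inclusions of balls drive the proofs. For (c), $B(y,R_{y})\subset B(x,R+R_{y})$ combined with doubling around $x$ yields the algebraic inequality $R_{y}^{n-2}\lesssim ((R+R_{y})/r_{0})^{\gamma}r_{0}^{n-2}$. For (b), $B(x,R)\subset B(y,2R)$ combined with doubling around $y$ yields $r_{0}^{n-2}\leq \int_{B(x,R)}V\leq \int_{B(y,2R)}V\lesssim (1+R/R_{y})^{\gamma}R_{y}^{n-2}$. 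Solving each inequality under the case split $R_{y}\lessgtr R$ produces the lower bound of (c) and the upper bound of (b), with $k_{0}$ expressible in terms of $\gamma$ and $n$. The main obstacle is the bookkeeping: one must verify that a single value of $k_{0}$ serves both (b) and (c) across all parameter regimes, and that the doubling rate $\gamma$ and the $B_{s}$-rate $2-n/s$ interact so the resulting algebraic inequalities close cleanly---a point handled in Shen \cite{Sh} by exploiting that both rates are governed by the same hypothesis $s\geq n/2$.
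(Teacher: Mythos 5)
The paper itself gives no proof of this lemma---it simply cites Shen's Lemma 1.4---so I am judging your reconstruction on its own terms. Your framework (locate $1/m_V$ as the radius where $\varphi(x,r)=r^{2-n}\int_{B(x,r)}V$ crosses $1$, and transfer information between $x$ and $y$ by ball inclusions) is the right one, but two of the estimates you actually write down fail. In part (a), you claim $\varphi(y,\varepsilon r_0)\lesssim \varepsilon^{2-n}$, ``which is $\leq 1$ once $\varepsilon$ is chosen small.'' Since $n\geq 3$ the exponent $2-n$ is negative, so $\varepsilon^{2-n}\to\infty$ as $\varepsilon\to 0$: shrinking $\varepsilon$ makes your bound worse, not better. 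Doubling only controls enlarged balls; to control $\int_{B(y,\varepsilon r_0)}V$ on a shrunken ball you must use the H\"older/reverse-H\"older small-ball estimate from your own first paragraph, which gives $\int_{B(y,\varepsilon r_0)}V\lesssim \varepsilon^{\,n-n/s}\int_{B(y,r_0)}V\lesssim_{C}\varepsilon^{\,n-n/s}r_0^{\,n-2}$ and hence $\varphi(y,\varepsilon r_0)\lesssim \varepsilon^{\,2-n/s}$, which is small precisely because $2-n/s>0$.

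More seriously, your arguments for (b) and (c) rely only on the doubling exponent $\gamma=\log_2 C_0$, and the resulting algebra closes only when $\gamma<n-2$. For (b), in the case $R_y\leq 2R$ your inequality reads $r_0^{\,n-2}\lesssim R^{\gamma}R_y^{\,n-2-\gamma}$; when $\gamma\geq n-2$ this yields an upper bound on $R_y$ (or nothing), whereas (b) needs a lower bound on $R_y$. Likewise your (c) inequality $R_y^{\,n-2}\lesssim ((R+R_y)/r_0)^{\gamma}r_0^{\,n-2}$ is satisfied by arbitrarily large $R_y$ once $\gamma\geq n-2$. And $\gamma\geq n-2$ genuinely occurs: $V(x)=|x|^{\sigma}$ with $\sigma>0$ lies in every $B_s$ and has $\gamma=n+\sigma$. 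So this is not deferrable bookkeeping; the missing ingredient is the reverse-H\"older growth estimate (Lemma \ref{le3.3.0}) applied on balls centered at $y$. The standard route for (b) is: bound $\varphi(y,R)\lesssim (1+Rm_V(x))^{N_0}$ by doubling around $x$ (as in Lemma \ref{le3.3}), then use $\varphi(y,\rho)\lesssim(\rho/R)^{2-n/s}\varphi(y,R)$ for $\rho<R$ to conclude $\varphi(y,\rho)\leq 1$ for all $\rho\leq cR(1+Rm_V(x))^{-N_0/(2-n/s)}$, which is the required lower bound on $1/m_V(y)$. Part (c) then follows from (b) purely algebraically by exchanging $x$ and $y$ and solving $|x-y|m_V(x)\leq C(1+|x-y|m_V(y))^{k_0}|x-y|m_V(y)$ in the two regimes $|x-y|m_V(y)\leq 1$ and $|x-y|m_V(y)\geq 1$; this also explains why the exponent in (c) is exactly $k_0/(k_0+1)$, a point your separate geometric argument for (c) does not deliver.
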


\begin{lemma}\label{le3.3.0}
{\rm (\cite[Lemma 1.2]{Sh})} Suppose that $V\in B_{s}, s>\frac{n}{2}$. There exists a constant $C$ such that for $0<r<R<\infty$,
$$\frac{1}{r^{n-2}}\int_{B(x,r)}V(y)dy\lesssim\Big(\frac{R}{r}\Big)^{\frac{n}{s}-2}\cdot\frac{1}{R^{n-2}}\int_{B(x,R)}V(y)dy.$$
\end{lemma}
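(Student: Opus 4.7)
The plan is to chain together three elementary estimates: a forward Hölder inequality on the smaller ball, monotonicity of the $L^s$-integral when enlarging from $B(x,r)$ to $B(x,R)$, and the reverse Hölder property of $V$ on the larger ball.

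First, since $s > 1$, applying Hölder's inequality with exponents $s$ and $s'$ gives
\begin{equation*}
\int_{B(x,r)} V(y)\,dy \leq |B(x,r)|^{1-1/s} \left(\int_{B(x,r)} V^s(y)\,dy\right)^{1/s}.
\end{equation*}
Because $B(x,r) \subset B(x,R)$, the $L^s$-integral is bounded by $\bigl(\int_{B(x,R)} V^s(y)\,dy\bigr)^{1/s}$. At this point I invoke $V \in B_s$ on the ball $B(x,R)$, which yields
\begin{equation*}
\left(\int_{B(x,R)} V^s(y)\,dy\right)^{1/s} \lesssim |B(x,R)|^{1/s - 1} \int_{B(x,R)} V(y)\,dy.
\end{equation*}

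Combining these three steps produces
\begin{equation*}
\int_{B(x,r)} V(y)\,dy \lesssim \left(\frac{|B(x,r)|}{|B(x,R)|}\right)^{1-1/s} \int_{B(x,R)} V(y)\,dy = \left(\frac{r}{R}\right)^{n-n/s} \int_{B(x,R)} V(y)\,dy.
\end{equation*}
Dividing both sides by $r^{n-2}$ and rewriting $r^{n-n/s}/r^{n-2} = r^{2-n/s}$, then inserting the compensating factor $R^{n-2}/R^{n-2}$, one rearranges to
\begin{equation*}
\frac{1}{r^{n-2}} \int_{B(x,r)} V(y)\,dy \lesssim \left(\frac{r}{R}\right)^{2-n/s} \cdot \frac{1}{R^{n-2}} \int_{B(x,R)} V(y)\,dy = \left(\frac{R}{r}\right)^{n/s - 2} \cdot \frac{1}{R^{n-2}} \int_{B(x,R)} V(y)\,dy,
\end{equation*}
which is precisely the asserted inequality.

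There is no real obstacle here; the proof is three lines of inequalities and one arithmetic check on the exponents. The only subtle point is that the hypothesis $s > n/2 \geq 1$ is needed merely to guarantee that both Hölder and the reverse Hölder condition are available, while the specific combination $n/s - 2$ in the conclusion comes out automatically from computing $(1 - 1/s) \cdot n - (n-2) = 2 - n/s$.
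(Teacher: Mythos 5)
Your proof is correct and is essentially the standard argument (the paper cites this as Lemma 1.2 of Shen without proof, and Shen's proof is exactly this chain of H\"older, set inclusion, and reverse H\"older, followed by the exponent bookkeeping you carry out). The only cosmetic remark is that the inequality itself only needs $s>1$; the hypothesis $s>\frac{n}{2}$ is what makes the exponent $\frac{n}{s}-2$ negative and hence the estimate useful in the later lemmas.
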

\begin{lemma}\label{le3.3}
{\rm (\cite[Lemma 2.3]{GLP})} Suppose $V\in B_{s}, s>\frac{n}{2}$. Then for any $N>\log_{2}C_{0}+1$, there exists a constant $C_{N}$ such that for any $x\in\mathbb{R}^{n}$ and $r>0$,
$$\frac{1}{(1+rm_{V}(x))^{N}}\int_{B(x,r)}V(y)dy\lesssim C_{N}r^{n-2}.$$
\end{lemma}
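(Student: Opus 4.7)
The plan is to split on the size of $rm_V(x)$. Set $r_0 := 1/m_V(x)$. In the easy regime $r \leq r_0$, the very definition of the auxiliary function gives $\int_{B(x,r)} V(y)\,dy \leq r^{n-2}$, while the denominator satisfies $1 \leq (1+rm_V(x))^N \leq 2^N$; the desired inequality is then immediate with $C_N = 2^N$.

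The substantive regime is $r > r_0$. I would pick the unique integer $k \geq 1$ with $2^{k-1} < rm_V(x) \leq 2^k$, so that $B(x,r) \subset B(x, 2^k r_0)$. Iterating the doubling estimate \eqref{eq-double} exactly $k$ times produces
\begin{equation*}
\int_{B(x,r)} V(y)\,dy \leq \int_{B(x, 2^k r_0)} V(y)\,dy \lesssim C_0^{k} \int_{B(x, r_0)} V(y)\,dy \leq C_0^{k} r_0^{n-2},
\end{equation*}
where the last step is again the defining property of $m_V(x)$. The denominator admits the crude lower bound $(1+rm_V(x))^N \geq (rm_V(x))^N > 2^{(k-1)N}$.

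Combining the two bounds yields an upper estimate of the form $2^N (C_0/2^N)^{k} r_0^{n-2}$. The hypothesis $N > \log_2 C_0 + 1$ forces $C_0/2^N < 1/2$, so the geometric factor $(C_0/2^N)^k$ is uniformly bounded in $k \geq 0$. Finally, since $r > r_0$ and $n \geq 3$, we have $r_0^{n-2} \leq r^{n-2}$, which yields the claimed inequality.

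I do not anticipate any real obstacle. The one subtle point is extracting $\int_{B(x, r_0)} V(y)\,dy \leq r_0^{n-2}$ from the supremum definition of $m_V(x)$; this is handled either by applying the definition to radii $r_0 - \varepsilon$ and letting $\varepsilon \to 0$, or more robustly by replacing $r_0$ with a slightly larger radius and absorbing one extra application of \eqref{eq-double} into the multiplicative constant.
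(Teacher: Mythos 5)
The paper gives no proof of this lemma (it is quoted from \cite[Lemma 2.3]{GLP}), and your argument is the standard one behind it: compare $r$ dyadically with $r_{0}=1/m_{V}(x)$, iterate the doubling property \eqref{eq-double} $k$ times, and absorb $C_{0}^{k}$ into $2^{(k-1)N}$ using $N>\log_{2}C_{0}+1$; that part is correct. The one point to tighten is the easy regime $r\le r_{0}$: the bound $\int_{B(x,r)}V(y)\,dy\lesssim r^{n-2}$ does \emph{not} follow from ``the very definition'' of $m_{V}$, because the set $\{\rho>0:\rho^{2-n}\int_{B(x,\rho)}V\le 1\}$ whose supremum defines $1/m_{V}(x)$ is not a priori an interval, so a radius $r<r_{0}$ need not belong to it. You need Lemma \ref{le3.3} -- rather, Lemma \ref{le3.3.0} -- here: picking $\rho$ in that set with $r<\rho\le r_{0}$, it gives $r^{2-n}\int_{B(x,r)}V\lesssim(\rho/r)^{\frac{n}{s}-2}\cdot\rho^{2-n}\int_{B(x,\rho)}V\le 1$ since $\frac{n}{s}-2<0$; this is where the reverse H\"older hypothesis actually enters. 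Your $\varepsilon$-limiting remark only legitimizes the endpoint estimate $\int_{B(x,r_{0})}V\lesssim r_{0}^{n-2}$ used in the second regime (where monotonicity of $\rho\mapsto\int_{B(x,\rho)}V$ suffices), not the estimate for arbitrary $r<r_{0}$; with Lemma \ref{le3.3.0} invoked for the first regime, the rest of the proof stands as written.
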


\subsection{Generalized Morrey spaces associated with $L$}  Suppose that $V\in B_{s}$, $s>1$.  Let $L=-\Delta+V$ be the Schr\"odinger operator. Now we introduce a class of generalized Morrey spaces associated with  $L$.
For $k\in \mathbb{Z}$, let $E_{k}=B(x_{0},2^{k}r)\backslash B(x_{0},2^{k-1}r)$ and $\chi_{k}$  be the characteristic function of $E_{k}$.
\begin{definition}\label{def-HMorrey}
Suppose that $V\in B_{s}$, $s>1$.  Let $p\in [1,+\infty)$, $q\in [1,+\infty)$, $\alpha\in (-\infty,+\infty)$ and $\lambda\in (0,n)$, $\theta\in (-\infty,+\infty)$. For $f\in L_{loc}^{q}(\mathbb{R}^{n})$, we say $f\in L_{\alpha,\theta,V}^{p,q,\lambda}(\mathbb{R}^{n})$ provided that
$$\|f\|^{q}_{L_{\alpha,\theta,V}^{p,q,\lambda}(\mathbb{R}^{n})}=\sup_{B(x_{0},r)\subset \mathbb{R}^{n}}\frac{(1+rm_{V}(x_{0}))^{\alpha}}{r^{\lambda n}}\sum^{0}_{k=-\infty}|E_{k}|^{\theta q}\|\chi_{k}f\|^{q}_{L^{p}(\mathbb{R}^{n})}<\infty,$$
where $B(x_{0},r)$ denotes a ball  centered at $x_{0}$ and with radius r.
\end{definition}

\begin{proposition}
\item{\rm (i)}  For $\alpha_{1}>\alpha_{2}$, $L_{\alpha_{1},\theta, V}^{p,q,\lambda}(\mathbb{R}^{n})\subseteq L_{\alpha_{2},\theta, V}^{p,\lambda,q}(\mathbb{R}^{n})$;
\item{\rm (ii)} If $\theta=0$, $p=q$ and $\alpha<0$, $L^{p,\lambda}(\mathbb{R}^{n}) \subset L_{\alpha,\theta,V}^{p,q, \lambda}(\mathbb{R}^{n})$;
 \item{\rm (iii)}If $\theta=0$, $p=q$ and $\alpha>0$, $L_{\alpha,\theta,V}^{p,q, \lambda}(\mathbb{R}^{n}) \subset L^{p,\lambda}(\mathbb{R}^{n})$.
\end{proposition}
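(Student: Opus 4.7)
The plan is to observe that all three statements follow directly from comparing the size of the factor $(1+rm_V(x_0))^{\alpha}$ with $1$, since $1+rm_V(x_0)\geq 1$ for every ball $B(x_0,r)$, together with the fact that when $\theta=0$ and $p=q$ the dyadic-annular sum inside the norm telescopes to the $L^p$-norm on the whole ball $B(x_0,r)$.

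For part (i), I would simply note that $\alpha_{1}>\alpha_{2}$ and $1+rm_V(x_0)\geq 1$ imply $(1+rm_V(x_0))^{\alpha_{1}}\geq (1+rm_V(x_0))^{\alpha_{2}}$. Plugging this into the definition of the norm, every supremand for the $\alpha_{2}$-norm is dominated by the corresponding supremand for the $\alpha_{1}$-norm, and taking the supremum over $B(x_0,r)$ gives $\|f\|_{L_{\alpha_{2},\theta,V}^{p,q,\lambda}}\leq \|f\|_{L_{\alpha_{1},\theta,V}^{p,q,\lambda}}$, whence the inclusion (the second space in the display is a typo and should read $L_{\alpha_{2},\theta,V}^{p,q,\lambda}$).

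For parts (ii) and (iii), the preliminary step is to rewrite the norm when $\theta=0$ and $p=q$. Since the annuli $E_k=B(x_0,2^kr)\setminus B(x_0,2^{k-1}r)$ with $k\leq 0$ partition $B(x_0,r)$ up to a null set, one has
$$\sum_{k=-\infty}^{0}\|\chi_k f\|_{L^p(\mathbb{R}^n)}^{p}=\int_{B(x_0,r)}|f(y)|^{p}\,dy,$$
so that
$$\|f\|_{L_{\alpha,0,V}^{p,p,\lambda}(\mathbb{R}^n)}^{p}=\sup_{B(x_0,r)}\frac{(1+rm_V(x_0))^{\alpha}}{r^{\lambda n}}\int_{B(x_0,r)}|f(y)|^{p}\,dy.$$
For (ii), with $\alpha<0$, the factor $(1+rm_V(x_0))^{\alpha}\leq 1$, so $\|f\|_{L_{\alpha,0,V}^{p,p,\lambda}}\leq \|f\|_{L^{p,\lambda}}$ and therefore $L^{p,\lambda}\subset L_{\alpha,0,V}^{p,p,\lambda}$. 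For (iii), with $\alpha>0$, the factor $(1+rm_V(x_0))^{\alpha}\geq 1$, so $\|f\|_{L^{p,\lambda}}\leq\|f\|_{L_{\alpha,0,V}^{p,p,\lambda}}$ and the reversed inclusion holds.

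There is essentially no obstacle; the proof is a one-line monotonicity argument once the sum has been identified with the $L^p$-integral over the ball. The only mild point to be careful about is the order of the inclusions versus the order of the norms (a larger norm corresponds to a smaller space), which is used in going from the norm inequalities to the inclusions in (ii) and (iii).
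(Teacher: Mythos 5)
Your proof is correct: the paper states this proposition without any proof, and your monotonicity argument (comparing $(1+rm_V(x_0))^{\alpha}$ with $1$, plus the observation that for $\theta=0$, $p=q$ the annular sum over $k\le 0$ reassembles into $\int_{B(x_0,r)}|f|^p\,dy$) is exactly the intended, and essentially the only, argument. Your parenthetical remarks — that the exponent order in (i) is a typo and that larger norms give smaller spaces — are also the right points to flag.
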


\subsection{Calder\'on-Zygmund operators}We say that an operator $T$ taking $C_{c}^{\infty}(\mathbb{R}^{n})$ into $L_{loc}^{1}(\mathbb{R}^{n})$ is called a Calder\'{o}n-Zygmund operator if
\begin{itemize}
\item[(a)] $T$ extends to a bounded linear operator on $L^{2}(\mathbb{R}^{n})$;
\item[(b)] There exists a kernel $K$ such that for every $f\in L_{loc}^{1}(\mathbb{R}^{n})$;
$$Tf(x)=\int_{\mathbb{R}^{n}}K(x,y)f(y)dy \ \  a.e.\ on\ \{\text{ supp }f\}^{c},$$
\item[(c)] The kernel $K(x,y)$ satisfies the Calder\'{o}n-Zygmund estimate
$$|K(x,y)|\leq \frac{C}{|x-y|^{n}};$$
$$|K(x+h,y)-K(x,y)|+|K(x,y+h)-K(x,y)|\leq \frac{C|h|^{\delta}}{|x-y|^{n+\delta}}$$
\end{itemize}
for $x,y\in \mathbb{R}^{n}$, $|h|<\frac{|x-y|}{2}$ and for some $\delta>0$.

Shen \cite{Sh} obtained the following result.
\begin{theorem}\label{theorem-CZ}{\rm \cite[Theorem 0.8]{Sh})}
Suppose $V\in B_{n}$. Then
$$\nabla(-\Delta+V)^{-1}\nabla, \nabla(-\Delta+V)^{-\frac{1}{2}}\text{ and }(-\Delta+V)^{-\frac{1}{2}}\nabla$$
are Calder\'{o}n-Zygmund operators.
\end{theorem}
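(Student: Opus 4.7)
The plan is to verify, for each of the three operators, the three defining properties of a Calder\'on--Zygmund operator in turn: $L^{2}$-boundedness, an off-diagonal integral kernel representation, and the pointwise size and H\"older regularity bounds on that kernel. The $L^{2}$-boundedness is the easiest ingredient: using the quadratic form identity $\langle Lu,u\rangle=\|\nabla u\|_{2}^{2}+\int V|u|^{2}$, one gets $\|\nabla u\|_{2}\leq\|L^{1/2}u\|_{2}$; setting $u=L^{-1/2}f$ yields the bound for $\nabla L^{-1/2}$, taking adjoints gives it for $L^{-1/2}\nabla$, and iterating the identity handles $\nabla L^{-1}\nabla$.

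The real content is the kernel analysis, which I would route through the fundamental solution $\Gamma(x,y)$ of $L=-\Delta+V$. When $V\in B_{n}$, one can establish the pointwise bounds $|\Gamma(x,y)|\lesssim |x-y|^{2-n}$, $|\nabla_{x}\Gamma(x,y)|\lesssim |x-y|^{1-n}$, and $|\nabla_{x}\nabla_{y}\Gamma(x,y)|\lesssim |x-y|^{-n}$, starting from a maximum-principle or Feynman--Kac comparison with the free Laplacian kernel and absorbing the $V$-contribution through Lemmas~\ref{le3.3.0} and \ref{le3.3}, which bound $V$-averages over balls by $r^{n-2}$-powers. The kernel of $\nabla L^{-1}\nabla$ is then $\nabla_{x}\nabla_{y}\Gamma(x,y)$ directly. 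For $\nabla L^{-1/2}$ and $L^{-1/2}\nabla$ I would use the subordination formula $L^{-1/2}=\pi^{-1/2}\int_{0}^{\infty}e^{-tL}t^{-1/2}\,dt$, differentiate under the integral sign, insert Gaussian gradient bounds for the heat semigroup of $L$ (available since $V\geq 0$ and the Feynman--Kac representation dominates $p_{t}^{L}$ by the Gaussian kernel), and balance the ranges $t\lesssim|x-y|^{2}$ and $t\gtrsim|x-y|^{2}$ to produce the required $|x-y|^{-n}$ size.

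The main obstacle, and the step where the stronger hypothesis $V\in B_{n}$ (as opposed to merely $V\in B_{n/2}$) becomes essential, is the H\"older regularity estimate
\[
|K(x+h,y)-K(x,y)|\lesssim \frac{|h|^{\delta}}{|x-y|^{n+\delta}},\qquad |h|<|x-y|/2,
\]
together with its $y$-analogue. I would derive this from interior Schauder-type estimates for weak solutions of $Lu=0$ on a ball of radius $\sim |x-y|$ disjoint from the pole: the self-improvement $B_{n}\Rightarrow B_{n+\varepsilon}$ places $V$ in a Morrey class with enough decay for standard elliptic Calder\'on--Zygmund together with De Giorgi--Nash--Moser theory to deliver uniform $C^{1,\delta}$ bounds on $\Gamma(\cdot,y)$ independent of $m_{V}(y)$. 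The $B_{n}$ integrability is sharp here, since the Sobolev exponent for $\nabla u$ in dimension $n$ must pair with the reverse H\"older integrability of $V$ so that the perturbation $Vu$ stays in the class where the free-Laplacian $C^{1,\delta}$ theory applies; any weaker assumption would degrade the modulus of continuity below the Calder\'on--Zygmund threshold. Once (c) is established for all three kernels, combining it with steps (a) and (b) completes the theorem.
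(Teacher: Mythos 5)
The paper does not actually prove Theorem \ref{theorem-CZ}; it is quoted verbatim from Shen's Theorem 0.8, so the only meaningful comparison is with Shen's original argument. Your outline matches its architecture for $\nabla L^{-1}\nabla$: the $L^{2}$ bound from the quadratic form, the fundamental-solution estimates $|\Gamma(x,y)|\lesssim|x-y|^{2-n}$, $|\nabla_{x}\Gamma(x,y)|\lesssim|x-y|^{1-n}$, $|\nabla_{x}\nabla_{y}\Gamma(x,y)|\lesssim|x-y|^{-n}$, and the interior $C^{1,\delta}$ regularity coming from the self-improvement $B_{n}\Rightarrow B_{n+\varepsilon}$ (so that $Vu\in L^{n+\varepsilon}_{loc}$ and $W^{2,n+\varepsilon}\hookrightarrow C^{1,\delta}$) are exactly the ingredients Shen assembles. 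One small imprecision there: only the size bound $0\le\Gamma(x,y)\lesssim|x-y|^{2-n}$ follows from maximum-principle or Feynman--Kac comparison; the gradient bounds do not, and are themselves consequences of the local $C^{1,\delta}$ estimate combined with Lemmas \ref{le3.3.0} and \ref{le3.3}.

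The genuine gap is in your treatment of $\nabla L^{-1/2}$ and $L^{-1/2}\nabla$. You differentiate the subordination formula and invoke ``Gaussian gradient bounds for the heat semigroup of $L$,'' justified by the fact that $V\ge0$ and Feynman--Kac dominates $K_{t}(x,y)$ by the free Gaussian (Lemma \ref{le3.1}). That domination controls the kernel itself, not its spatial gradient: positivity and an upper bound on $K_{t}$ give no information whatsoever about $\nabla_{x}K_{t}$, and pointwise Gaussian bounds for $\nabla_{x}K_{t}$ are precisely the hard estimate that requires $V\in B_{n}$ (they fail in this pointwise form for general $V\in B_{n/2}$). As written, the argument is circular at exactly the point where the hypothesis must do its work. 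The standard repair --- and what Shen actually does --- is to bypass the semigroup and write $L^{-1/2}=\pi^{-1}\int_{0}^{\infty}\lambda^{-1/2}(L+\lambda)^{-1}\,d\lambda$, so the kernel of $\nabla L^{-1/2}$ is $\pi^{-1}\int_{0}^{\infty}\lambda^{-1/2}\,\nabla_{x}\Gamma_{\lambda}(x,y)\,d\lambda$ with $\Gamma_{\lambda}$ the fundamental solution of $-\Delta+V+\lambda$. The gradient and H\"older estimates for $\Gamma_{\lambda}$, with rapid decay in $\lambda|x-y|^{2}$, then come from the same elliptic $W^{2,n+\varepsilon}\hookrightarrow C^{1,\delta}$ bootstrap you already set up for $\Gamma$ (treating $(V+\lambda)u$ as the inhomogeneity), and the $\lambda$-integration produces the required $|x-y|^{-n}$ size and $|h|^{\delta}|x-y|^{-n-\delta}$ smoothness. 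With that substitution your outline becomes a faithful sketch of Shen's proof.
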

\begin{corollary}\label{coro-commutator-bdd}
Suppose that $V\in B_{n}$ and $b\in BM(\mathbb{R}^{n})$. The commutator $[b, T]$ is bounded on $L^{p}(\mathbb{R}^{n})$.
\end{corollary}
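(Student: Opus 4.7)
The plan is to combine Theorem \ref{theorem-CZ} with the classical commutator theorem of Coifman-Rochberg-Weiss \cite{CRW}. Since $V\in B_{n}$, Theorem \ref{theorem-CZ} guarantees that each of the three operators $\nabla(-\Delta+V)^{-1}\nabla$, $\nabla(-\Delta+V)^{-1/2}$, and $(-\Delta+V)^{-1/2}\nabla$ fits the Calder\'on-Zygmund framework of subsection 2.3: it is $L^{2}$-bounded and, off the diagonal, is given by integration against a kernel $K(x,y)$ satisfying the standard size bound and H\"older regularity of some order $\delta>0$. These are precisely the hypotheses on $T$ in the Coifman-Rochberg-Weiss commutator theorem, so for $b\in BMO(\mathbb{R}^{n})$ and $1<p<\infty$ the $L^{p}$-boundedness of $[b,T]$ follows directly.

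For the underlying argument I would follow the Fefferman-Stein sharp maximal function approach popularized by Str\"omberg and cited in the introduction. The key step is the pointwise estimate
\[
M^{\sharp}([b,T]f)(x)\lesssim \|b\|_{BMO}\bigl(M_{r}(Tf)(x)+M_{r}f(x)\bigr),
\]
for any fixed $r>1$, where $M_{r}g=(M|g|^{r})^{1/r}$. To prove it, fix a cube $Q\ni x$, split $f=f\chi_{2Q}+f\chi_{(2Q)^{c}}$, and expand
\[
[b,T]f=(b-b_{Q})\,Tf-T\bigl((b-b_{Q})f\chi_{2Q}\bigr)-T\bigl((b-b_{Q})f\chi_{(2Q)^{c}}\bigr).
\]
The first summand is controlled by H\"older's inequality together with the John-Nirenberg inequality; the second by the $L^{r}$-boundedness of $T$ (inherited from its $L^{2}$-boundedness and the kernel estimates via classical Calder\'on-Zygmund theory); and the third by exploiting the H\"older continuity of $K$ in its second variable together with the oscillation estimate $|b_{2^{k+1}Q}-b_{Q}|\lesssim k\|b\|_{BMO}$, which renders the resulting series over the annuli $2^{k+1}Q\setminus 2^{k}Q$ summable.

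The main technical obstacle is precisely this last tail term, since one must balance the geometric decay $2^{-k\delta}$ coming from the kernel's H\"older exponent $\delta$ against the linear-in-$k$ growth of the BMO averages; as Theorem \ref{theorem-CZ} supplies such a $\delta>0$, the series converges with no Schr\"odinger-specific input required. Combining the pointwise bound with the Fefferman-Stein sharp maximal inequality, the $L^{p}$-boundedness of $T$, and the $L^{p/r}$-boundedness of the Hardy-Littlewood maximal operator $M$ (valid for any $r<p$), one concludes $\|[b,T]f\|_{L^{p}}\lesssim \|b\|_{BMO}\|f\|_{L^{p}}$, which is the desired assertion.
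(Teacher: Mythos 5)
Your proposal is correct and follows exactly the route the paper intends: the corollary is stated without proof precisely because it is the classical Coifman--Rochberg--Weiss commutator theorem applied to the operators that Theorem \ref{theorem-CZ} certifies as Calder\'on--Zygmund operators, and your sharp-maximal-function argument is the standard (Str\"omberg-style) proof of that classical theorem, with the balance between the kernel's H\"older decay $2^{-k\delta}$ and the linear growth of $|b_{2^{k+1}Q}-b_{Q}|$ handled correctly. No gaps.
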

 In particular, let $K$ denote the kernel of one of the above operators. Then $K$ satisfies the following estimate:
\begin{equation}\label{eq2.1}
|K(x,y)|\leq \frac{C_{N}}{(1+\mid x-y\mid m_{V}(x))^{N}}\frac{1}{|x-y|^{n}}
\end{equation}
for any $N\in\mathbb{N}$. See (6.5) of Shen \cite{Sh} for the details.

Suppose $V\in B_{s}$ for $s\geq\frac{n}{2}$.  Let $L=-\Delta+V$. The semigroup generated by $L$ is defined as:
\begin{equation}\label{eq3.1}
T_{t}f(x)=e^{-tL}f(x)=\int_{\mathbb{R}^{n}}K_{t}(x,y)f(y)dy, f\in L^{2}(\mathbb{R}^{n}),\ t>0,
\end{equation}
where $K_{t}$ is the kernel of $e^{-tL}$.

\begin{lemma}\label{le3.1}
{\rm (\cite{DZ})} Let $K_{t}(x,y)$ be as in (\ref{eq3.1}). For every nonnegative integer k, there is a constant $C_{k}$ such that
$$0\leq K_{t}(x,y)\leq C_{k}t^{-\frac{n}{2}}\exp(-{\mid x-y \mid^{2}}/{5t})(1+\sqrt{t}\ m_{V}(x)+\sqrt{t}\ m_{V}(y))^{-k}.$$
\end{lemma}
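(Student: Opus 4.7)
The plan is to establish the estimate in two steps: first obtain the Gaussian upper bound, then extract the polynomial decay in $m_V$. For the Gaussian piece, since $V\geq 0$, the Feynman--Kac representation gives
$$K_{t}(x,y)=\mathbb{E}^{x,y}_{0,t}\!\left[\exp\!\left(-\int_{0}^{t}V(\omega(s))\,ds\right)\right]p_{t}(x-y),$$
where $p_{t}(z)=(4\pi t)^{-n/2}e^{-|z|^{2}/(4t)}$ is the free heat kernel. The exponential weight is bounded by $1$, so $0\leq K_{t}(x,y)\leq p_{t}(x-y)$. Absorbing a small loss into a larger constant yields
$$K_{t}(x,y)\leq Ct^{-n/2}\exp(-|x-y|^{2}/(5t)),$$
which settles the case $k=0$ and disposes of the regime where $\sqrt{t}\,m_{V}(x)+\sqrt{t}\,m_{V}(y)\lesssim 1$.

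For the $m_V$-decay when $\sqrt{t}\,m_{V}(x)$ is large, I would use the Duhamel identity
$$K_{t}(x,y)=p_{t}(x-y)-\int_{0}^{t}\!\!\int_{\mathbb{R}^{n}}p_{t-s}(x-z)V(z)K_{s}(z,y)\,dz\,ds,$$
and iterate it $k$ times. The key tool for bounding each perturbative factor is Lemma \ref{le3.3}: for $N>\log_{2}C_{0}+1$,
$$\int_{B(x,\sqrt{t})}V(z)\,dz\lesssim t^{(n-2)/2}(1+\sqrt{t}\,m_{V}(x))^{N}.$$
After splitting the $z$-integral into dyadic shells around $x$ and using the Gaussian decay of $p_{t-s}$ to tame the far shells, each iteration injects an additional factor of $\sqrt{t}\,m_{V}(x)$ in the denominator, because the natural scale on which $V$ ceases to be negligible is precisely $1/m_{V}(x)$. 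Symmetry of $K_{t}$ in $(x,y)$ then delivers the analogous factor involving $m_{V}(y)$.

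The main obstacle is to propagate the Gaussian exponent cleanly through the iteration. Each Duhamel substitution forces one to use the semigroup identity $\int p_{t-s}(x-z)p_{s}(z-y)\,dz=p_{t}(x-y)$, but to keep a single Gaussian factor in the final bound one has to budget a small amount of exponential decay, via elementary inequalities of the form
$$e^{-|x-z|^{2}/(4(t-s))}e^{-|z-y|^{2}/(4s)}\leq e^{-|x-y|^{2}/(5t)}\,e^{-c|x-z|^{2}/(t-s)}.$$
One also needs Lemma \ref{le2.1}(b) to trade $m_{V}(z)$ for $m_{V}(x)$ (up to the polynomial correction $(1+|x-z|m_{V}(x))^{k_{0}}$) when $z$ lies in a ball around $x$. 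Because the argument is lengthy but standard, a complete write-up would indicate the above structure and then cite Dziubański--Zienkiewicz for the technical computation.
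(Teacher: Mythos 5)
The paper does not prove this lemma; it quotes it from Dziuba\'nski--Zienkiewicz \cite{DZ}, so your sketch is being measured against the standard argument there. Your first step is correct: Feynman--Kac (or the Trotter product formula) with $V\ge 0$ gives $0\le K_t(x,y)\le (4\pi t)^{-n/2}e^{-|x-y|^2/(4t)}$, which settles $k=0$ and the regime $\sqrt t\,(m_V(x)+m_V(y))\lesssim 1$; the final symmetrization is also essentially fine (take geometric means of the two one-sided bounds with exponent $2k$, using $(1+a)(1+b)\ge 1+a+b$).

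The gap is in the mechanism you propose for the decay when $\sqrt t\,m_V(x)\gg 1$. The Duhamel identity reads
$$p_t(x-y)-K_t(x,y)=\int_0^t\!\!\int_{\mathbb{R}^n}p_{t-s}(x-z)V(z)K_s(z,y)\,dz\,ds\;\ge\;0,$$
so $K_t\ll p_t$ holds precisely because the correction term is \emph{large}: it nearly cancels $p_t$. An argument in which each Duhamel substitution ``injects an additional factor of $\sqrt t\,m_V(x)$ in the denominator'' would be proving that the correction is \emph{small}, i.e.\ that $K_t\approx p_t$ --- the opposite of what is needed; moreover the iterated Duhamel (Neumann) series is not absolutely convergent in this regime, so term-by-term bounds on $k$ iterates cannot close. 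The proof in \cite{DZ} is structured differently: (i) a single application of Duhamel, together with Lemma \ref{le3.3} and a matching \emph{lower} bound for $e^{-sL}\mathbf{1}(z)$ at scales $s\lesssim m_V(z)^{-2}$, shows $e^{-tL}\mathbf{1}(x)\le 1-\epsilon_0$ once $\sqrt t\,m_V(x)\ge 1$; (ii) the semigroup property in time, combined with Lemma \ref{le2.1} to compare $m_V$ at nearby points, upgrades this to $e^{-tL}\mathbf{1}(x)\le C(1+\sqrt t\,m_V(x))^{-\delta}$ for some fixed $\delta>0$; (iii) writing $K_t=K_{t/2}\circ K_{t/2}$ converts this $L^1$-type decay into a pointwise bound, and interpolating with the Gaussian estimate and iterating boosts $\delta$ to an arbitrary $k$ (this is where the exponent degrades from $1/(4t)$ to $1/(5t)$). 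As written, your central step does not go through and the sketch would need to be rebuilt along these lines before deferring the remaining computations to \cite{DZ}.
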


\noindent{\bf Some notations.}Throughout the paper, $c$ and $C$ will denote unspecified positive constants, possibly different at each occurence. The constants are independence of the functions. ${\mathsf U}\approx{\mathsf V}$ represents that
there is a constant $c>0$ such that $c^{-1}{\mathsf V}\le{\mathsf
U}\le c{\mathsf V}$ whose right inequality is also written as
${\mathsf U}\lesssim{\mathsf V}$. Similarly, if ${\mathsf V}\ge
c{\mathsf U}$, we denote ${\mathsf V}\gtrsim{\mathsf U}$.

\section{Riesz transforms and the commutators on $L^{p,q,\lambda}_{\alpha,\theta,V}(\mathbb{R}^{n})$}\label{sec2}
Throughout this paper, for $p\in (1, \infty)$, denote by $p'$ the conjugate of $p$, that is, $\frac{1}{p}+\frac{1}{p'}=1.$ Let $V\in B_{n}$. In this section, we assume that $T$ is one of the Schrodinger type operators $\nabla(-\Delta+V)^{-1}\nabla$, $\nabla(-\Delta+V)^{-1/2}$ and $(-\Delta+V)^{-1/2}\nabla$.
  We study the boundedness on  $L^{p,q,\lambda}_{\alpha,\theta,V}(\mathbb{R}^{n})$ of $T$ and its commutator $[b,\ T]$ with $b\in BMO(\mathbb{R}^{n})$. The bounded mean oscillation space $BMO(\mathbb{R}^{n})$ is defined as follows.
 \begin{definition}
A locally integrable function $b$ is said to belong to $BMO(\mathbb{R}^{n})$ if
 \begin{equation*}
\|b\|_{BMO}=:\sup_{B}\frac{1}{|B|}\int_{B}|b(x)-b_{B}|dx<\infty,
\end{equation*}
where the supremum is taken over all balls $B$ in $\mathbb{R}^{n}$. Here $b_{B}=\frac{1}{|B|}\int_{B}b(x)dx$ stands for the mean value of $b$ over the ball $B$ and $|B|$ means the measure of $B$.
\end{definition}
  We first prove that $T$ is bounded on $L^{p,q,\lambda}_{\alpha,\theta,V}(\mathbb{R}^{n})$.

\begin{theorem}\label{th-CZ}
Suppose that $\alpha \in (-\infty,0]$, $\lambda \in (0,n)$ and $1<q<\infty$.
If $1<p<\infty,\frac{\lambda}{q}-\frac{1}{p}<\theta<\frac{\lambda}{q}+1-\frac{1}{p}$, then the operators $T$ are bounded on $L^{p,q,\lambda}_{\alpha,\theta,V}(\mathbb{R}^{n})$.
\end{theorem}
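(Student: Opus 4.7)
Fix a ball $B(x_0,r)$ and extend the annular family $E_j=B(x_0,2^jr)\setminus B(x_0,2^{j-1}r)$ to all $j\in\mathbb{Z}$. Decompose $f=\sum_{j\in\mathbb{Z}}f_j$ with $f_j=f\chi_{E_j}$. For each $k\leq 0$, write
$$\chi_k Tf=\sum_{|j-k|\leq 1}\chi_k Tf_j+\sum_{j\leq k-2}\chi_k Tf_j+\sum_{j\geq k+2}\chi_k Tf_j=:\mathrm{I}_k+\mathrm{II}_k+\mathrm{III}_k.$$
Theorem \ref{theorem-CZ} gives the $L^p$-boundedness of $T$, so $\|\mathrm{I}_k\|_{L^p}\lesssim\sum_{|j-k|\leq 1}\|f_j\|_{L^p}$.

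For $\mathrm{II}_k$ one has $|x-y|\gtrsim 2^kr$ whenever $x\in E_k$, $y\in E_j$, $j\leq k-2$; the pure Calder\'on--Zygmund size bound $|K(x,y)|\lesssim|x-y|^{-n}$ together with H\"older's inequality in $y$ yields $\|\mathrm{II}_k\|_{L^p}\lesssim\sum_{j\leq k-2}2^{(j-k)n/p'}\|f_j\|_{L^p}$. For $\mathrm{III}_k$ one has $|x-y|\gtrsim 2^jr$. Here one invokes the refined kernel estimate \eqref{eq2.1} with $N$ large, together with Lemma \ref{le2.1}(c), which gives $m_V(x)\gtrsim m_V(x_0)(1+rm_V(x_0))^{-k_0/(k_0+1)}$ uniformly on $B(x_0,r)$, obtaining
$$\|\mathrm{III}_k\|_{L^p}\lesssim\sum_{j\geq k+2}2^{(k-j)n/p}\bigl(1+2^jrm_V(x_0)\bigr)^{-N'}\|f_j\|_{L^p}$$
for an $N'>0$ as large as we wish.

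Next, raise each estimate to the $q$-th power, multiply by $|E_k|^{\theta q}\approx(2^kr)^{n\theta q}$, and sum over $k\leq 0$. A weighted H\"older inequality applied inside each $j$-sum factors out a convergent geometric series in $|j-k|$; after exchanging the order of summation, what remains is a sum over $j$ of $(2^jr)^{n\theta q}\|f_j\|_{L^p}^q$ multiplied by a residual weight. The terms with $j\leq 0$ are bounded directly by $\|f\|^q_{L^{p,q,\lambda}_{\alpha,\theta,V}(\mathbb{R}^n)}$ tested at the ball $B(x_0,r)$ itself; the terms with $j>0$ are bounded by testing instead at $B(x_0,2^jr)$, which produces an extra factor $(1+2^jrm_V(x_0))^{-\alpha}\leq 2^{j|\alpha|}(1+rm_V(x_0))^{-\alpha}$ since $\alpha\leq 0$. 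The lower bound $\theta>\lambda/q-1/p$ provides geometric decay in the large-$j$ tail of $\mathrm{III}_k$, and the upper bound $\theta<\lambda/q+1/p'$ plays the analogous role for $\mathrm{II}_k$.

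The main obstacle is precisely the large-$j$ tail of $\mathrm{III}_k$, where the ball-comparison at scale $2^jr$ blows up by the factor $2^{j|\alpha|}$. This is where the refined kernel estimate \eqref{eq2.1} becomes decisive: choosing $N'>|\alpha|$, the decay $(1+2^jrm_V(x_0))^{-N'}$ swallows $2^{j|\alpha|}$, and the residual $j$-series converges under the stated range of $\theta$. A plain Calder\'on--Zygmund size bound would leave too slow a decay to close this step; the $m_V$-enhanced kernel bound of Shen is genuinely needed.
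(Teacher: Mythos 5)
Your decomposition and overall strategy coincide with the paper's: the same annular splitting into a diagonal part, an inner part $\mathrm{II}_k$ and an outer part $\mathrm{III}_k$, the diagonal handled by Shen's $L^p$-boundedness (Theorem \ref{theorem-CZ}), the off-diagonal parts by the kernel bound (\ref{eq2.1}), and each $\|f_j\|_{L^p}$ recovered by testing the norm at the dilated ball $B(x_0,2^jr)$, which costs a factor $(1+2^jrm_V(x_0))^{-\alpha/q}$. Your remark that the $m_V$-refined kernel estimate is only indispensable for $\mathrm{III}_k$ is correct and slightly sharper than the paper's write-up: for $j\le k-2\le 0$ the test balls shrink, so $(1+2^jrm_V(x_0))^{-\alpha/q}\le(1+rm_V(x_0))^{-\alpha/q}$ is cancelled by the outer weight $(1+rm_V(x_0))^{\alpha/q}$, and the plain Calder\'on--Zygmund size bound suffices there.

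However, the final absorption step for $\mathrm{III}_k$ is false as written. You first enlarge $(1+2^jrm_V(x_0))^{-\alpha}$ to $2^{j|\alpha|}(1+rm_V(x_0))^{-\alpha}$ and then assert that $(1+2^jrm_V(x_0))^{-N'}$ with $N'>|\alpha|$ swallows $2^{j|\alpha|}$. It does not: if $rm_V(x_0)=2^{-M}$ with $M$ large, then $(1+2^jrm_V(x_0))^{-N'}\approx 1$ for all $0\le j\le M$ while $2^{j|\alpha|}$ climbs to $2^{M|\alpha|}$, so the product is not bounded uniformly over balls. Insisting on your chain would force the extra (unwanted) hypothesis $\theta>\frac{\lambda}{q}-\frac{1}{p}+\frac{|\alpha|}{n}$ to restore summability. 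The repair, which is what the paper's proof implicitly does, is never to trade the growth for a power of $2^j$: pair the two powers of the \emph{same} base, $(1+2^jrm_V(x_0))^{-\alpha/q}\,(1+2^jrm_V(x_0))^{-N'}\le 1$ once $N'\ge -\alpha/q$, and bound the leftover outer weight simply by $(1+rm_V(x_0))^{\alpha}\le 1$ since $\alpha\le 0$. (Alternatively, split into $2^jrm_V(x_0)\le 1$, where $(1+2^jrm_V(x_0))^{-\alpha/q}$ is already bounded, and $2^jrm_V(x_0)>1$, where the kernel decay is genuinely geometric.) With that one-line change your argument closes under exactly the stated range of $\theta$.
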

\begin{proof}
For any ball $B(x_{0},r)$, write
$$f(y)=\sum^{\infty}_{j=-\infty}f(y)\chi_{j}(y)=\sum^{\infty}_{j=-\infty}f_{j}(y),$$
where $E_{j}=B(x_{0},2^{j}r)\backslash B(x_{0},2^{j-1}r)$. Hence, we have

\begin{eqnarray}
&&{(1+rm_{V}(x_{0}))^{\alpha}}{r^{\lambda n}}\sum^{0}_{k=-\infty}|E_{k}|^{\theta q}\|\chi_{k}Tf\|^{q}_{L^{p}(\mathbb{R}^{n})}\nonumber\\
&&\lesssim\ {(1+rm_{V}(x_{0}))^{\alpha}}{r^{-\lambda n}}\sum^{0}_{k=-\infty}|E_{k}|^{\theta q}\left(\sum^{k-2}_{j=-\infty}\|\chi_{k}Tf_{j}\|_{L^{p}(\mathbb{R}^{n})}\right)^{q}\nonumber\\
&&\quad + {(1+rm_{V}(x_{0}))^{\alpha}}{r^{-\lambda n}}\sum^{0}_{k=-\infty}|E_{k}|^{\theta q}\left(\sum^{k+1}_{j=k-1}\|\chi_{k}Tf_{j}\|_{L^{p}(\mathbb{R}^{n})}\right)^{q}\nonumber\\
&&\quad+ {(1+rm_{V}(x_{0}))^{\alpha}}{r^{-\lambda n}}\sum^{0}_{k=-\infty}|E_{k}|^{\theta q}\left(\sum^{\infty}_{j=k+2}\|\chi_{k}Tf_{j}\|_{L^{p}(\mathbb{R}^{n})}\right)^{q}\nonumber\\
&&= A_{1}+A_{2}+A_{3}.\nonumber
\end{eqnarray}
For $A_{2}$, by Theorem \ref{theorem-CZ}, we have
\begin{eqnarray*}
A_{2}&\lesssim& (1+rm_{V}(x_{0}))^{\alpha}r^{-\lambda n}\sum^{0}_{k=-\infty}|E_{k}|^{\theta q}\left(\sum^{k+1}_{j=k-1}\|Tf_{j}\|_{L^{p}(\mathbb{R}^{n})}\right)^{q}\\
&\lesssim& (1+rm_{V}(x_{0}))^{\alpha}r^{-\lambda n}\sum^{0}_{k=-\infty}|E_{k}|^{\theta q}\left(\sum^{k+1}_{j=k-1}\|f_{j}\|_{L^{p}(\mathbb{R}^{n})}\right)^{q}\\
&\lesssim& \|f\|^{q}_{L^{p,q,\lambda}_{\alpha,\theta, V}}.
\end{eqnarray*}
We first estimate the term $E_{1}$.  Note that if $x\in E_{k}$, $y\in E_{j}$ and $j\leq k-2$, then $|x-y|\sim 2^{k}r$. By Lemma \ref{le2.1} and (\ref{eq2.1}), we can get
\begin{eqnarray*}
\|\chi_{k}Tf_{j}\|_{L^{p}(\mathbb{R}^{n})}
&\lesssim&\left(\int_{E_{k}}\mid\int_{\mathbb{R}^{n}}\frac{1}{(1+\mid x-y\mid m_{V}(x))^{N}}\frac{1}{|x-y|^{n}}|f_{j}(y)|dy\mid^{p}dx\right)^{\frac{1}{p}}\\
&\lesssim&\frac{1}{(1+2^{k}rm_{V}(x_{0}))^{N/k_{0}+1}}\frac{1}{(2^{k}r)^{n}}|E_{k}|^{\frac{1}{p}}\int_{E_{j}}|f(y)|dy\\
&\lesssim&\frac{1}{(1+2^{k}rm_{V}(x_{0}))^{N/k_{0}+1}}|E_{k}|^{\frac{1}{p}-1}|E_{j}|^{\frac{1}{p'}}\left(\int_{E_{j}}|f(y)|^{p}dy\right)^{\frac{1}{p}},
\end{eqnarray*}
where $\frac{1}{p}+\frac{1}{p'}=1$.
Since $- \frac{1}{p}+ \frac{\lambda}{q} <\theta<(1- \frac{1}{p})+ \frac{\lambda}{q}$, we obtain
\begin{eqnarray*}
A_{1}&\lesssim&(1+rm_{V}(x_{0}))^{\alpha}r^{-\lambda n}\sum^{0}_{k=-\infty}|E_{k}|^{\theta q}\left(\sum^{k-2}_{j=-\infty}\frac{|E_{k}|^{\frac{1}{p}-1}|E_{j}|^{\frac{1}{p'}}\|\chi_{j}f\|_{L^{p}(\mathbb{R}^{n})}}{(1+2^{k}rm_{V}(x_{0}))^{N/k_{0}+1}}\right)^{q}\\
&\lesssim&(1+rm_{V}(x_{0}))^{\alpha}r^{-\lambda n}\sum^{0}_{k=-\infty}|E_{k}|^{\theta q}\Big(\sum^{k-2}_{j=-\infty}\frac{2^{\frac{n(j-k)}{p'}}(1+2^{j}rm_{V}(x_{0}))^{-\frac{\alpha}{q}}}{(1+2^{k}rm_{V}(x_{0}))^{N/k_{0}+1}}\\
&&\times(2^{j}r)^{\frac{\lambda n}{q}}|E_{j}|^{-\theta}(1+2^{j}rm_{V}(x_{0}))^{\frac{\alpha}{q}}(2^{j}r)^{-\frac{\lambda n}{q}}(|E_{j}|^{\theta q}\|\chi_{j}f\|^{q}_{L^{p}(\mathbb{R}^{n})})^{\frac{1}{q}}\Big)^{q}\\
&\lesssim&(1+rm_{V}(x_{0}))^{\alpha}r^{-\lambda n}\sum^{0}_{k=-\infty}|E_{k}|^{\lambda}\left(\sum^{k-2}_{j=-\infty}2^{\frac{n(j-k)}{p'}}|E_{k}|^{\theta-\frac{\lambda}{q}}|E_{j}|^{\frac{\lambda}{q}-\theta}\right)^{q}\|f\|^{q}_{L^{p,q,\lambda}_{\alpha,\theta, V}(\mathbb{R}^{n})}\\
&\lesssim&(1+rm_{V}(x_{0}))^{\alpha}r^{-\lambda n}\sum^{0}_{k=-\infty}|E_{k}|^{\lambda}\left(\sum^{k-2}_{j=-\infty}2^{(j-k)n(1-\frac{1}{p}+\frac{\lambda}{q}-\theta)}\right)^{q}\|f\|^{q}_{L^{p,q,\lambda}_{\alpha,\theta, V}(\mathbb{R}^{n})}\\
&\lesssim&\|f\|^{q}_{L^{p,q,\lambda}_{\alpha,\theta, V}(\mathbb{R}^{n})}.
\end{eqnarray*}

For $A_{3}$, we can see that when $x\in E_{k}$, $y\in E_{j}$, then $|x-y|\sim2^{j}r$ for $j\geq k+2$. Similar to $E_{1}$, we have
\begin{eqnarray*}
\|\chi_{k}Tf_{j}\|_{L^{p}(\mathbb{R}^{n})}&\lesssim&\frac{1}{(1+2^{j}rm_{V}(x_{0}))^{N/k_{0}+1}}\frac{1}{(2^{j}r)^{n}}|E_{k}|^{\frac{1}{p}}\int_{E_{j}}|f(y)|dy\\
&\lesssim&\frac{1}{(1+2^{j}rm_{V}(x_{0}))^{N/k_{0}+1}}\frac{1}{(2^{j}r)^{n}}|E_{k}|^{\frac{1}{p}}|E_{j}|^{\frac{1}{p'}}\left(\int_{E_{j}}|f(y)|^{p}dy\right)^{\frac{1}{p}}\\
&\lesssim&\frac{1}{(1+2^{j}rm_{V}(x_{0}))^{N/k_{0}+1}}|E_{k}|^{\frac{1}{p}}|E_{j}|^{-\frac{1}{p}}\|\chi_{j}f\|_{L^{p}(\mathbb{R}^{n})}.
\end{eqnarray*}
Since $-\frac{1}{p}+\frac{\lambda}{q}<\theta<(1-\frac{1}{p})+\frac{\lambda}{q}$, choosing $N$ large enough, we obtain
\begin{eqnarray*}
A_{3}&\lesssim&(1+rm_{V}(x_{0}))^{\alpha}r^{-\lambda n}\sum^{0}_{k=-\infty}|E_{k}|^{\theta q}\left(\sum^{\infty}_{j=k+2}\frac{|E_{k}|^{\frac{1}{p}}|E_{j}|^{-\frac{1}{p}}\|\chi_{j}f\|_{L^{p}(\mathbb{R}^{n})}}{(1+2^{j}rm_{V}(x_{0}))^{N/k_{0}+1}}\right)^{q}\\
&\lesssim&(1+rm_{V}(x_{0}))^{\alpha}r^{-\lambda n}\sum^{0}_{k=-\infty}|E_{k}|^{\theta q}\Big\{\sum^{\infty}_{j=k+2}\frac{(1+2^{j}rm_{V}(x_{0}))^{-\frac{\alpha}{q}}(2^{j}r)^{\frac{\lambda n}{q}}|E_{j}|^{-\alpha}}{(1+2^{j}rm_{V}(x_{0}))^{N/k_{0}+1}}\\
&&\times 2^{(k-j)\frac{n}{p}}(1+2^{j}rm_{V}(x_{0}))^{\frac{\alpha}{q}}(2^{j}r)^{-\frac{\lambda n}{q}}(|E_{j}|^{\theta q}\|\chi_{j}f\|^{q}_{L^{p}(\mathbb{R}^{n})})^{\frac{1}{q}}\Big\}^{q}\\
&\lesssim&(1+rm_{V}(x_{0}))^{\alpha}r^{-\lambda n}\sum^{0}_{k=-\infty}|E_{k}|^{\theta q}\left(\sum^{\infty}_{j=k+2}2^{(k-j)\frac{n}{p}}|E_{j}|^{\frac{\lambda}{q}-\theta}\right)^{q}\|f\|^{q}_{L^{p,q,\lambda}_{\alpha,\theta, V}(\mathbb{R}^{n})}\\
&\lesssim&\|f\|^{q}_{L^{p,q,\lambda}_{\alpha,\theta, V}(\mathbb{R}^{n})}.
\end{eqnarray*}
Let $N=[-\frac{\alpha}{q}+1](k_{0}+1)$. Finally,
$\|Tf\|_{L^{p,q,\lambda}_{\alpha,\theta, V}(\mathbb{R}^{n})}\lesssim\|f\|_{L^{p,q,\lambda}_{\alpha,\theta, V}(\mathbb{R}^{n})}.$ This completes the proof of Theorem \ref{th-CZ}.
\end{proof}

Suppose that $b\in BMO(\mathbb{R}^{n})$ and $V\in B_{n}$.  Let $T$ be one of the Schrodinger type operators $\nabla(-\Delta+V)^{-1}\nabla$, $\nabla(-\Delta+V)^{-1/2}$ and $(-\Delta+V)^{-1/2}\nabla$. The commutator  $[b, T]$ is defined as
$$[b,T]f=bT(f)-T(bf).$$
\begin{theorem}\label{th-commutator}
Suppose that $V\in B_{n}$ and $b\in BMO(\mathbb{R}^{n})$. Let $1<p<\infty$, $1<q<\infty$, $\alpha\in (-\infty, 0], \lambda \in (0,n)$.
If the index $(p,q,\theta,\lambda)$ satisfies $\frac{\lambda}{q}-\frac{1}{p}<\theta<\frac{\lambda}{q}+1-\frac{1}{p}$, then
$$ \|[b,T]f\|_{L^{p,q,\lambda}_{\alpha,\theta,V}}\leq C\|f\|_{L^{p,q,\lambda}_{\alpha,\theta,V}}\|b\|_{BMO}.$$
\end{theorem}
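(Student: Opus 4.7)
The plan is to adapt the argument used for Theorem \ref{th-CZ}, replacing the pointwise kernel estimates with their BMO-weighted analogues and invoking Corollary \ref{coro-commutator-bdd} in place of Theorem \ref{theorem-CZ} on the diagonal. Fix a ball $B(x_{0},r)$, decompose $f=\sum_{j\in\mathbb{Z}}f_{j}$ with $f_{j}=f\chi_{E_{j}}$, and split
$$(1+rm_{V}(x_{0}))^{\alpha}r^{-\lambda n}\sum_{k=-\infty}^{0}|E_{k}|^{\theta q}\|\chi_{k}[b,T]f\|^{q}_{L^{p}(\mathbb{R}^{n})}\leq B_{1}+B_{2}+B_{3},$$
where $B_{1},B_{2},B_{3}$ correspond to $j\leq k-2$, $|j-k|\leq 1$, and $j\geq k+2$ respectively, paralleling the decomposition $A_{1}+A_{2}+A_{3}$ in the proof of Theorem \ref{th-CZ}. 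The diagonal piece $B_{2}$ is handled at once by Corollary \ref{coro-commutator-bdd}: $\|\chi_{k}[b,T]f_{j}\|_{L^{p}}\lesssim\|b\|_{BMO}\|f_{j}\|_{L^{p}}$, after which the summation proceeds exactly as for $A_{2}$.

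For the off-diagonal pieces I would use the integral representation
$$[b,T]f_{j}(x)=\int_{E_{j}}(b(x)-b(y))K(x,y)f(y)\,dy,$$
and insert $b(x)-b(y)=(b(x)-b_{B^{k}})-(b(y)-b_{B^{j}})+(b_{B^{j}}-b_{B^{k}})$, where $B^{k}=B(x_{0},2^{k}r)$. The three resulting summands are controlled by (a) the John--Nirenberg estimate $\|(b-b_{B^{k}})\chi_{E_{k}}\|_{L^{p}}\lesssim |E_{k}|^{1/p}\|b\|_{BMO}$; (b) H\"older's inequality combined with $\|(b-b_{B^{j}})\chi_{E_{j}}\|_{L^{p'}}\lesssim |E_{j}|^{1/p'}\|b\|_{BMO}$; and (c) the nested-ball estimate $|b_{B^{k}}-b_{B^{j}}|\lesssim |k-j|\|b\|_{BMO}$. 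Combined with the kernel bound (\ref{eq2.1}) and Lemma \ref{le2.1} (used to replace $m_{V}(x)$ by $m_{V}(x_{0})$), each contribution is bounded by the corresponding term of $A_{1}$ or $A_{3}$ multiplied by the extra factor $(1+|k-j|)\|b\|_{BMO}$.

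Since the open condition $\lambda/q-1/p<\theta<\lambda/q+1-1/p$ yields a strict geometric decay $2^{(j-k)\delta}$ with $\delta>0$ in the inner sums of $A_{1}$ and $A_{3}$, the polynomial factor $(1+|k-j|)$ is absorbed without any loss of summability. Taking $N$ large enough (analogous to the choice $N=[-\alpha/q+1](k_{0}+1)$) again compensates the weight $(1+rm_{V}(x_{0}))^{\alpha}$. The main technical subtlety lies in the term carrying $b(x)-b_{B^{k}}$: this factor is unbounded and cannot be pulled out of the outer $L^{p}$-norm, so it must be retained inside and estimated via John--Nirenberg after the $y$-integral is pointwise controlled by the kernel inequality, which is essentially constant in $x\in E_{k}$ once $|j-k|\geq 2$. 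Verifying that this step preserves the strict geometric decay is the crux of the argument; the rest is bookkeeping identical to the proof of Theorem \ref{th-CZ}.
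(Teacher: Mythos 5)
Your proposal is correct and follows essentially the same route as the paper's proof: the same annular decomposition into $B_{1}+B_{2}+B_{3}$, Corollary \ref{coro-commutator-bdd} on the diagonal, and off the diagonal the kernel bound (\ref{eq2.1}) with Lemma \ref{le2.1} combined with John--Nirenberg and the nested-ball telescoping bound, producing the extra factor $|k-j|$ that is absorbed by the strict geometric decay guaranteed by the hypothesis on $\theta$. The only cosmetic difference is that you split $b(x)-b(y)$ into three terms centered at $b_{B^{k}}$ and $b_{B^{j}}$, whereas the paper centers both oscillations at $b_{2^{k}r}$ and folds the telescoping factor into the estimate of $\int_{E_{j}}|b(y)-b_{2^{k}r}|^{p'}dy$.
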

\begin{proof}
For any ball $B=B(x_{0},r)$, we can get
$$f(y)=\sum^{\infty}_{j=-\infty}f(y)\chi_{E_{j}}(y)=\sum^{\infty}_{j=-\infty}f_{j}(y),$$
where $E_{j}=B(x_{0},2^{j}r)\backslash B(x_{0},2^{j-1}r)$. Hence, we have
\begin{eqnarray*}
&&(1+rm_{V}(x_{0}))^{\alpha}r^{-\lambda n}\sum^{0}_{k=-\infty}|E_{k}|^{\theta q}\|\chi_{k}[b,T]f\|^{q}_{L^{p}(\mathbb{R}^{n})} \\
&&\lesssim (1+rm_{V}(x_{0}))^{\alpha}r^{-\lambda n}\sum^{0}_{k=-\infty}|E_{k}|^{\theta q}\left(\sum^{k-2}_{j=-\infty}\|\chi_{k}[b,T]f_{j}\|_{L^{p}(\mathbb{R}^{n})}\right)^{q} \\
&&+(1+rm_{V}(x_{0}))^{\alpha}r^{-\lambda n}\sum^{0}_{k=-\infty}|E_{k}|^{\theta q}\left(\sum^{k+1}_{j=k-1}\|\chi_{k}[b,T]f_{j}\|_{L^{p}(\mathbb{R}^{n})}\right)^{q} \\
&&+(1+rm_{V}(x_{0}))^{\alpha}r^{-\lambda n}\sum^{0}_{k=-\infty}|E_{k}|^{\theta q}\left(\sum^{\infty}_{j=k+2}\|\chi_{k}[b,T]f_{j}\|_{L^{p}(\mathbb{R}^{n})}\right)^{q} \\
&&=: B_{1}+B_{2}+B_{3}.
\end{eqnarray*}
For $B_{2}$, by Corollary \ref{coro-commutator-bdd}, we have
\begin{eqnarray*}
B_{2}&\lesssim&(1+rm_{V}(x_{0}))^{\alpha}r^{-\lambda n}\sum^{0}_{k=-\infty}|E_{k}|^{\theta q}\left(\sum^{k+1}_{j=k-1}\|[b,T]f_{j}\|_{L^{p}(\mathbb{R}^{n})}\right)^{q}\\
&\lesssim& (1+rm_{V}(x_{0}))^{\alpha}r^{-\lambda n}\sum^{0}_{k=-\infty}|E_{k}|^{\theta q}\left(\sum^{k+1}_{j=k-1}\|f_{j}\|_{L^{p}(\mathbb{R}^{n})}\right)^{q}\|b\|^{q}_{BMO}\\
&\lesssim& \|f\|^{q}_{L^{p,q,\lambda}_{\alpha,\theta,V}}\|b\|^{q}_{BMO}.
\end{eqnarray*}
Denote by $b_{2^{k}r}$ the mean value of $b$ on the ball $B(x_{0}, 2^{k}r)$. For $B_{1}$, by Lemma \ref{le2.1} and (\ref{eq2.1}), we have
\begin{eqnarray*}
&&\|\chi_{k}[b,T]f_{j}\|_{L^{p}(\mathbb{R}^{n})}\\
&&\lesssim\ \frac{1}{(1+2^{k}rm_{V}(x_{0}))^{N/k_{0}+1}}\frac{1}{(2^{k}r)^{n}}\left[\int_{E_{k}}\Big(\int_{E_{j}}|b(x)-b(y)||f(y)|dy\Big)^{p}dx\right]^{\frac{1}{p}}\\
&&\lesssim\ \frac{1}{(1+2^{k}rm_{V}(x_{0}))^{N/k_{0}+1}}\frac{1}{(2^{k}r)^{n}}\Big[\Big(\int_{E_{k}}|b(x)-b_{2^{k}r}|^{p}dx\Big)^{\frac{1}{p}}\int_{E_{j}}|f(y)|dy\\
&&+|E_{k}|^{\frac{1}{p}}\int_{E_{j}}|b(y)-b_{2^{k}r}||f(y)|dy\Big]\\
&&\lesssim\ \frac{1}{(1+2^{k}rm_{V}(x_{0}))^{N/k_{0}+1}}\frac{1}{(2^{k}r)^{n}}\Big[|E_{k}|^{\frac{1}{p}}|E_{j}|^{1-\frac{1}{p}}\|b\|_{BMO}\|f_{j}\|_{L^{p}(\mathbb{R}^{n})}\\
&&\quad +|E_{k}|^{\frac{1}{p}}\|f_{j}\|_{L^{p}(\mathbb{R}^{n})}\Big(\int_{E_{j}}|b(y)-b_{2^{k}r}|^{p'}dx\Big)^{\frac{1}{p'}}\Big]\\
&&\lesssim
\frac{1}{(1+2^{k}rm_{V}(x_{0}))^{N/k_{0}+1}}\frac{|E_{j}|^{1-\frac{1}{p}}}{|E_{k}|^{1-\frac{1}{p}}}(k-j)\|f_{j}\|_{L^{p}(\mathbb{R}^{n})}\|b\|_{BMO},
\end{eqnarray*}
where in the third inequality, we have used John-Nirenberg's inequality (\cite{JN}).
Since $- \frac{1}{p}+ \frac{\lambda}{q} <\theta<(1- \frac{1}{p})+ \frac{\lambda}{q}$, we obtain
\begin{eqnarray*}
B_{1}&\lesssim& \frac{(1+rm_{V}(x_{0}))^{\alpha}}{r^{\lambda n}}\sum^{0}_{k=-\infty}|E_{k}|^{\theta q}\left(\sum^{k-2}_{j=-\infty}\frac{(k-j)\|f_{j}\|_{L^{p}(\mathbb{R}^{n})}}{(1+2^{k}rm_{V}(x_{0}))^{N/k_{0}+1}}
\frac{|E_{j}|^{1-\frac{1}{p}}}{|E_{k}|^{1-\frac{1}{p}}}\right)^{q}\|b\|^{q}_{BMO}\\
&\lesssim& \frac{(1+rm_{V}(x_{0}))^{\alpha}}{r^{\lambda n}}\sum^{0}_{k=-\infty}|E_{k}|^{\theta q}\Big[\sum^{k-2}_{j=-\infty}\frac{(1+2^{j}rm_{V}(x_{0}))^{-\frac{\alpha}{q}}}{(1+2^{k}rm_{V}(x_{0}))^{N/k_{0}+1}}\|b\|^{q}_{BMO}\\
&&\times(k-j)(2^{j}r)^{\frac{\lambda n}{q}}|E_{j}|^{-\theta}\frac{|E_{j}|^{1-\frac{1}{p}}}{|E_{k}|^{1-\frac{1}{p}}}\Big]^{q}\|f\|^{q}_{L_{\alpha,\theta ,V}^{p,q,\lambda}}\\
&\lesssim& \frac{(1+rm_{V}(x_{0}))^{\alpha}}{r^{\lambda n}}\sum^{0}_{k=-\infty}|E_{k}|^{\lambda}\left(\sum^{k-2}_{j=-\infty}(k-j)2^{(k-j)n(\theta-\frac{\alpha}{q}+\frac{1}{p}-1)}\right)^{q}
\|f\|^{q}_{L^{p,q,\lambda}_{\alpha,\theta,V}}\|b\|^{q}_{BMO}\\
&\lesssim& \|f\|^{q}_{L^{p,q,\lambda}_{\alpha,\theta,V}}\|b\|^{q}_{BMO}.
\end{eqnarray*}
For $B_{3}$, similar to $B_{1}$, we have
\begin{eqnarray*}
&&\|\chi_{k}[b,T]f_{j}\|_{L^{p}(\mathbb{R}^{n})}\\
&&\lesssim\ \frac{1}{(1+2^{j}rm_{V}(x_{0}))^{N/k_{0}+1}}\frac{1}{(2^{j}r)^{n}}\left(\int_{E_{k}}\mid\int_{E_{j}}|(b(x)-b(y))f(y)|dy\mid^{p}dx\right)^{\frac{1}{p}}\\
&&\lesssim\ \frac{j-k}{(1+2^{j}rm_{V}(x_{0}))^{N/k_{0}+1}}|E_{k}|^{\frac{1}{p}}|E_{j}|^{-\frac{1}{p}}\|f_{j}\|_{L^{p}(\mathbb{R}^{n})}\|b\|_{BMO}.
\end{eqnarray*}
Since $-\frac{1}{p}+\frac{\lambda}{q}<\theta<(1-\frac{1}{p})+\frac{\lambda}{q}$, choosing $N$ large enough, we obtain
\begin{eqnarray*}
B_{3}&\lesssim& (1+rm_{V}(x_{0}))^{\alpha}r^{-\lambda n}\sum^{0}_{k=-\infty}|E_{k}|^{\theta q}\left(\sum^{\infty}_{j=k+2}\frac{|E_{k}|^{\frac{1}{p}}|E_{j}|^{-\frac{1}{p}}(j-k)\|f_{j}\|_{L^{p}
(\mathbb{R}^{n})}}{(1+2^{j}rm_{V}(x_{0}))^{N/k_{0}+1}}\right)^{q}\|b\|^{q}_{BMO}\\
&\lesssim& (1+rm_{V}(x_{0}))^{\alpha}r^{-\lambda n}\sum^{0}_{k=-\infty}|E_{k}|^{\theta q}\Big[\sum^{\infty}_{j=k+2}\frac{(1+2^{j}rm_{V}(x_{0}))^{-\frac{\alpha}{q}}}{(1+2^{j}rm_{V}(x_{0}))^{N/k_{0}+1}}\\
&&\times (j-k)(2^{j}r)^{\frac{\lambda n}{q}}|E_{j}|^{-\theta}|E_{k}|^{\frac{1}{p}}|E_{j}|^{-\frac{1}{p}}\Big]^{q}\|f\|^{q}_{L^{p,q,\lambda}_{\alpha,\theta,V}(\mathbb{R}^{n})}\|b\|^{q}_{BMO}\\
&\lesssim& (1+rm_{V}(x_{0}))^{\alpha}r^{-\lambda n}\sum^{0}_{k=-\infty}|E_{k}|^{\lambda}\left(\sum^{\infty}_{j=k+2}2^{(k-j)n(\frac{1}{p}-\frac{\lambda}{q}+\theta)}\right)^{q}
\|f\|^{q}_{L^{p,q,\lambda}_{\alpha,\theta,V}(\mathbb{R}^{n})}\|b\|^{q}_{BMO}\\
&\lesssim& \|f\|^{q}_{L^{p,q,\lambda}_{\alpha,\theta,V}(\mathbb{R}^{n})}\|b\|^{q}_{BMO}.
\end{eqnarray*}
Let $N=[-\frac{\alpha}{q}+1](k_{0}+1)$. We finally get
$$\|[b,T]f\|_{L^{p,q,\lambda}_{\alpha,\theta,V}(\mathbb{R}^{n})}\lesssim \|f\|_{L^{p,q,\lambda}_{\alpha,\theta,V}(\mathbb{R}^{n})}\|b\|_{BMO}.$$
\end{proof}

\section{Schr\"odinger type operators on $L^{p,q,\lambda}_{\alpha,\theta,V}(\mathbb{R}^{n})$}\label{sec3}
Let $L=-\Delta+V$ be the Schr\"odinger operator, where $V\in B_{s}, s>n/2$. For $0<\beta<\frac{n}{2}$, the fractional integral operator associated with $L$ is defined by
$$L^{-\beta}(f)(x)=\int^{\infty}_{0}e^{-tL}(f)(x)t^{\beta-1}dt.$$
Denote by $K_{\beta}(x,y)$ the kernel  of $L^{-\beta}$.
By Lemma \ref{le3.1}, Bui \cite{Bui} obtained the following pointwise estimate.
\begin{lemma}\label{le3.2}{\rm (\cite[Proposition 3.3]{Bui})}
Let $0<\beta<\frac{n}{2}$. For  $N\in\mathbb{N}$, there is a constant $C_{N}$ such that
\begin{equation}\label{eq-Kbeta}
 K_{\beta}(x,y)=\int_{0}^{\infty}K_{t}(x,y)t^{\beta-1}dt\leq \frac{C_{N}}{(1+\mid x-y\mid m_{V}(x))^{N}}\frac{1}{|x-y|^{n-2\beta}},
 \end{equation}
 where $K_{t}(\cdot, \cdot)$ is the kernel of the semigroup $e^{-tL}$.
\end{lemma}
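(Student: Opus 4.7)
The plan is to start from the definition of $K_\beta$, substitute the pointwise heat kernel bound from Lemma \ref{le3.1} (dropping the $m_V(y)$ factor, which is not needed), and reduce the problem to a one-variable integral estimate. Concretely, with $k=N$ in Lemma \ref{le3.1},
\begin{equation*}
K_\beta(x,y) \;\leq\; C_N \int_0^\infty t^{\beta-\frac{n}{2}-1}\,\exp\!\bigl(-|x-y|^2/5t\bigr)\bigl(1+\sqrt{t}\,m_V(x)\bigr)^{-N}\,dt.
\end{equation*}
So the task becomes: extract the factor $(1+|x-y|m_V(x))^{-N}$ and show that the leftover integral is $\lesssim |x-y|^{2\beta-n}$.

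The key algebraic trick I would use is the elementary inequality $1+ab \leq (1+a)(1+b)$ for $a,b\geq 0$, applied with $a=\sqrt{t}\,m_V(x)$ and $b=|x-y|/\sqrt{t}$, which gives
\begin{equation*}
1+|x-y|m_V(x)\;\leq\;\bigl(1+\sqrt{t}\,m_V(x)\bigr)\bigl(1+|x-y|/\sqrt{t}\bigr),
\end{equation*}
and hence after raising to the $N$-th power and rearranging,
\begin{equation*}
\bigl(1+\sqrt{t}\,m_V(x)\bigr)^{-N}\;\leq\;\frac{\bigl(1+|x-y|/\sqrt{t}\bigr)^{N}}{\bigl(1+|x-y|m_V(x)\bigr)^{N}}.
\end{equation*}
Inserting this into the integral representation pulls the desired factor $(1+|x-y|m_V(x))^{-N}$ outside the integral.

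What remains is to verify
\begin{equation*}
\int_0^\infty t^{\beta-\frac{n}{2}-1}\exp\!\bigl(-|x-y|^2/5t\bigr)\bigl(1+|x-y|/\sqrt{t}\bigr)^{N}\,dt\;\lesssim\;|x-y|^{2\beta-n}.
\end{equation*}
I would handle this by the change of variable $t=|x-y|^2 s$, which rescales the expression to $|x-y|^{2\beta-n}$ times a constant integral in $s$. Convergence of that $s$-integral is straightforward: near $s=0$ the Gaussian factor $e^{-1/5s}$ beats any polynomial divergence from $(1+1/\sqrt{s})^N$ and from $s^{\beta-n/2-1}$, while near $s=\infty$ the decay $s^{\beta-n/2-1}$ is integrable thanks to the hypothesis $\beta<n/2$.

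I do not expect a real obstacle here; the only minor subtlety is the algebraic decomposition trading $\sqrt{t}\,m_V(x)$ for $|x-y|m_V(x)$, which is the step that makes the Gaussian-times-polynomial integral factor cleanly. Everything else is a textbook computation with a Gaussian weight, and the strict inequality $\beta<n/2$ is exactly what keeps the final integral finite.
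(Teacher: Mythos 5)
Your proof is correct. The paper itself gives no argument for this lemma---it simply cites Bui's Proposition 3.3---so there is nothing internal to compare against, but your derivation (drop the $m_V(y)$ term in Lemma \ref{le3.1}, trade $(1+\sqrt{t}\,m_V(x))^{-N}$ for $(1+|x-y|m_V(x))^{-N}$ via $1+ab\le(1+a)(1+b)$, then rescale $t=|x-y|^2s$ and use $\beta<n/2$ together with the Gaussian factor for convergence at $\infty$ and $0$ respectively) is exactly the standard proof of this estimate and is complete as written.
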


\begin{definition}
Let $f\in L_{loc}^{q}(\mathbb{R}^{n})$. Denote by $|B|$  the Lebesgue measure of the ball $B\subset \mathbb{R}^{n}$. The fractional Hardy-Littlewood  maximal function $M_{\sigma,\gamma}$ is defined by
$$M_{\sigma,\gamma}f(x)=\sup_{x\in B}\left(\frac{1}{|B|^{1-\frac{\sigma \gamma}{n}}}\int_{B}|f(y)|^{\gamma}dy\right)^{\frac{1}{\gamma}}.$$
\end{definition}

\begin{lemma}\label{le3.5}
{\rm (\cite{CRW})} Suppose  $1<\gamma<p_{1}<\frac{n}{\sigma}$ and $\frac{1}{p_{2}}=\frac{1}{p_{1}}-\frac{\sigma}{n}$.  Then
$$\|M_{\sigma,\gamma}f\|_{L^{p_{2}}(\mathbb{R}^{n})}\lesssim\|f\|_{L^{p_{1}}(\mathbb{R}^{n})}.$$
\end{lemma}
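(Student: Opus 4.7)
The strategy is to reduce $M_{\sigma,\gamma}$ to the classical (unweighted) fractional Hardy--Littlewood maximal operator $M_\tau g(x)=\sup_{x\in B}|B|^{\tau/n-1}\int_B|g(y)|\,dy$, defined for $\tau\in(0,n)$. A direct unwinding of the definition of $M_{\sigma,\gamma}$ gives the pointwise identity
$$ M_{\sigma,\gamma}f(x)=\bigl(M_{\sigma\gamma}(|f|^\gamma)(x)\bigr)^{1/\gamma}, $$
so the entire lemma will fall out of the classical $L^a$--$L^b$ mapping property of $M_\tau$ applied to $g=|f|^\gamma$ on a shifted exponent pair.

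Next I would verify that the hypotheses of Lemma~\ref{le3.5} translate exactly to the classical hypotheses under the substitution $\tau=\sigma\gamma$, $a=p_1/\gamma$, $b=p_2/\gamma$. From $1<\gamma<p_1<n/\sigma$ one immediately reads off $0<\sigma\gamma<n$ and $1<a<n/\tau$, and multiplying the Sobolev relation $1/p_2=1/p_1-\sigma/n$ by $\gamma$ gives precisely $1/b=1/a-\tau/n$. Invoking the classical Hardy--Littlewood--Sobolev-type bound $\|M_\tau g\|_{L^b(\mathbb{R}^n)}\lesssim\|g\|_{L^a(\mathbb{R}^n)}$ with $g=|f|^\gamma$, raising the pointwise identity to the $p_2$-th power, integrating, and taking $p_2$-th roots yields
$$ \|M_{\sigma,\gamma}f\|_{L^{p_2}(\mathbb{R}^n)}=\|M_{\sigma\gamma}(|f|^\gamma)\|_{L^{p_2/\gamma}(\mathbb{R}^n)}^{1/\gamma}\lesssim\||f|^\gamma\|_{L^{p_1/\gamma}(\mathbb{R}^n)}^{1/\gamma}=\|f\|_{L^{p_1}(\mathbb{R}^n)}, $$
which is the desired estimate.

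The only substantive ingredient beyond algebraic bookkeeping is the classical $L^a$--$L^b$ bound for $M_\tau$, which is well-known. If a self-contained proof were wanted, I would derive it from the pointwise majorization $M_\tau g(x)\lesssim(Mg(x))^{1-a\tau/n}\|g\|_{L^a}^{a\tau/n}$, obtained by splitting the supremum defining $M_\tau g(x)$ at the radius $R$ balancing the small-ball contribution (bounded by $R^\tau Mg(x)$) against the large-ball contribution (bounded by $R^{\tau-n/a}\|g\|_{L^a}$ via H\"older), then raising to the $b$-th power, using the identity $b(1-a\tau/n)=a$, and invoking the $L^a$-boundedness of the Hardy--Littlewood maximal operator. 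Since this classical fractional maximal inequality is standard, the only ``obstacle'' in the present lemma is purely algebraic: confirming that the shifted exponents $(a,b)=(p_1/\gamma,p_2/\gamma)$ sit in the admissible range. I expect no genuine difficulty.
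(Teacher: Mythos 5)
Your proposal is correct. Note that the paper does not prove this lemma at all --- it is stated as a citation to \cite{CRW} --- so there is no in-paper argument to compare against; what you have supplied is a complete, standard derivation. The key identity $M_{\sigma,\gamma}f=\bigl(M_{\sigma\gamma}(|f|^{\gamma})\bigr)^{1/\gamma}$ follows immediately from the definition, and your exponent bookkeeping checks out: $a=p_{1}/\gamma>1$ because $p_{1}>\gamma$, $a<n/\tau$ with $\tau=\sigma\gamma$ because $p_{1}<n/\sigma$, and multiplying $1/p_{2}=1/p_{1}-\sigma/n$ by $\gamma$ gives $1/b=1/a-\tau/n$. The Hedberg-type pointwise bound $M_{\tau}g\lesssim (Mg)^{1-a\tau/n}\|g\|_{L^{a}}^{a\tau/n}$ together with $b(1-a\tau/n)=a$ and the $L^{a}$-boundedness of the Hardy--Littlewood maximal operator then closes the argument exactly as you indicate.
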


As a generalization of the fractional integral associated with $L$, the operators $V^{\beta_{2}}(-\Delta+V)^{-\beta_{1}}, 0\leq\beta_{2}\leq\beta_{1}\leq1$, have been studied by Sugano \cite{Su} systematically. Applying the method of Sugano \cite{Su} together with Lemma \ref{le3.2}, we can obtain the following result for $V^{\beta_{2}}(-\Delta+V)^{-\beta_{1}}, 0\leq\beta_{2}\leq\beta_{1}\leq n/2$. We omit the proof.
\begin{theorem}
 Suppose that $V\in B_{\infty}$. Let $1<\beta_{2}\leq\beta_{1}<\frac{n}{2}$. Then
$$|V^{\beta_{2}}(-\Delta+V)^{-\beta_{1}}f(x)| \lesssim  M_{2(\beta_{1}-\beta_{2}),1}f(x).$$
\end{theorem}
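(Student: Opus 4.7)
The plan is to combine the pointwise kernel estimate in Lemma \ref{le3.2} with the pointwise bound $V(x)\lesssim m_{V}(x)^{2}$ that is available under the hypothesis $V\in B_{\infty}$, and then to estimate the resulting integral by dyadic annular decomposition around $x$ at scale $1/m_{V}(x)$.

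First I would record two ingredients. By Lemma \ref{le3.2},
$$ |L^{-\beta_{1}}f(x)|\le C_{N}\int_{\mathbb{R}^{n}}\frac{1}{(1+|x-y|m_{V}(x))^{N}}\cdot\frac{|f(y)|}{|x-y|^{n-2\beta_{1}}}\,dy. $$
Next, since $V\in B_{\infty}$, for the ball $B=B(x,1/m_{V}(x))$ one has $V(x)\le\|V\|_{L^{\infty}(B)}\lesssim |B|^{-1}\int_{B}V\lesssim m_{V}(x)^{2}$ by the very definition of $m_{V}$, so $V^{\beta_{2}}(x)\lesssim m_{V}(x)^{2\beta_{2}}$.

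The main step is the dyadic decomposition. Writing $A_{k}=\{y:2^{k-1}/m_{V}(x)\le|x-y|<2^{k}/m_{V}(x)\}$ for $k\in\mathbb{Z}$, I would split the integral as $\sum_{k\in\mathbb{Z}}\int_{A_{k}}$ and use $|x-y|\sim 2^{k}/m_{V}(x)$ on $A_{k}$ to get
$$ V^{\beta_{2}}(x)|L^{-\beta_{1}}f(x)|\lesssim m_{V}(x)^{2\beta_{2}}\sum_{k\in\mathbb{Z}}\frac{1}{(1+2^{k})^{N}}\cdot\frac{m_{V}(x)^{n-2\beta_{1}}}{2^{k(n-2\beta_{1})}}\int_{B(x,2^{k}/m_{V}(x))}|f(y)|\,dy. $$
For each ball $B(x,2^{k}/m_{V}(x))$, the definition of $M_{2(\beta_{1}-\beta_{2}),1}$ gives
$$ \int_{B(x,2^{k}/m_{V}(x))}|f(y)|\,dy\lesssim \Bigl(\frac{2^{k}}{m_{V}(x)}\Bigr)^{n-2(\beta_{1}-\beta_{2})}M_{2(\beta_{1}-\beta_{2}),1}f(x). $$
Plugging this in, the powers of $m_{V}(x)$ telescope to $0$, and a quick bookkeeping shows the $k$-th term equals $2^{2k\beta_{2}}(1+2^{k})^{-N}$ times $M_{2(\beta_{1}-\beta_{2}),1}f(x)$.

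Finally I would choose $N>2\beta_{2}$ so that the series $\sum_{k\in\mathbb{Z}}2^{2k\beta_{2}}(1+2^{k})^{-N}$ converges: the tail $k\to+\infty$ is controlled by $N>2\beta_{2}$, while the tail $k\to-\infty$ converges because $\beta_{2}>0$ makes $2^{2k\beta_{2}}$ summable at $-\infty$. The main obstacle is really just the arithmetic of the exponents: getting the $m_{V}(x)$ weights and the dyadic factor $2^{k}$ to cancel correctly so that the bound is genuinely $M_{2(\beta_{1}-\beta_{2}),1}f(x)$ without any leftover $m_{V}$ factor. The hypothesis $V\in B_{\infty}$ is essential precisely for the pointwise conversion $V^{\beta_{2}}(x)\lesssim m_{V}(x)^{2\beta_{2}}$; without it one would only have an averaged version of this bound and the argument would have to be reorganized.
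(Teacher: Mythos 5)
Your proposal is correct and is exactly the argument the paper has in mind: the paper omits the proof, citing ``the method of Sugano together with Lemma \ref{le3.2},'' and its written proof of the companion Theorem \ref{th-4.5} follows the same scheme (kernel bound from Lemma \ref{le3.2}, dyadic annuli at scale $1/m_{V}(x)$, conversion of each annular integral into the fractional maximal function). The only structural difference is the one you correctly identify: since $V^{\beta_{2}}$ here acts at the fixed point $x$, the hypothesis $V\in B_{\infty}$ yields the pointwise bound $V(x)\lesssim m_{V}(x)^{2}$, which replaces the H\"older/reverse-H\"older step used in Theorem \ref{th-4.5}, and your exponent bookkeeping and the convergence of $\sum_{k}2^{2k\beta_{2}}(1+2^{k})^{-N}$ for $N>2\beta_{2}$ check out.
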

In a similar way,  by (\ref{eq-Kbeta}), we can get the following estimate for the operators $(-\Delta+V)^{-\beta_{1}}V^{\beta_{2}}$, $0\leq\beta_{2}\leq\beta_{1}<\frac{n}{2}$.
\begin{theorem}\label{th-4.5}
Suppose that $V\in B_{s}$ for $s>\frac{n}{2}$. Let $0\leq\beta_{2}\leq\beta_{1}<\frac{n}{2}$. Then
$$|(-\Delta+V)^{-\beta_{1}}(V^{\beta_{2}}f)(x)| \lesssim  M_{2(\beta_{1}-\beta_{2})},$$
where $(\frac{s}{\beta_{2}})'$ is the conjugate of $(\frac{s}{\beta_{2}})$.
\end{theorem}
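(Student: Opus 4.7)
The plan is to bound the kernel of $(-\Delta+V)^{-\beta_{1}}$ pointwise via Lemma \ref{le3.2}, split the resulting integral into dyadic annuli around $x$ at the critical scale $r_{0}=1/m_{V}(x)$, and on each annulus apply H\"older's inequality with exponents $(s/\beta_{2})$ and $(s/\beta_{2})'$ to separate $V^{\beta_{2}}$ from $f$. The factor involving $V$ will be controlled by the reverse H\"older property $V\in B_{s}$ together with Lemma \ref{le3.3.0} (for small annuli, $k\leq 0$) and Lemma \ref{le3.3} (for large annuli, $k\geq 0$), while the factor involving $f$ will be estimated by the fractional maximal function $M_{2(\beta_{1}-\beta_{2}),(s/\beta_{2})'}f(x)$.

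Concretely, I first write
$$(-\Delta+V)^{-\beta_{1}}(V^{\beta_{2}}f)(x)=\int_{\mathbb{R}^{n}}K_{\beta_{1}}(x,y)V^{\beta_{2}}(y)f(y)\,dy,$$
apply Lemma \ref{le3.2} with a large parameter $N_{0}$, and decompose the integral over the annuli $A_{k}=\{y:2^{k-1}r_{0}\leq|x-y|<2^{k}r_{0}\}$, $k\in\mathbb{Z}$. On $A_{k}$, H\"older gives
$$\int_{A_{k}}V^{\beta_{2}}|f|\leq\Bigl(\int_{A_{k}}V^{s}\Bigr)^{\beta_{2}/s}\Bigl(\int_{A_{k}}|f|^{(s/\beta_{2})'}\Bigr)^{1/(s/\beta_{2})'},$$
and then $V\in B_{s}$ turns $(|A_{k}|^{-1}\int V^{s})^{1/s}$ into $|A_{k}|^{-1}\int V$. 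For $k\leq 0$, Lemma \ref{le3.3.0} yields $\int_{A_{k}}V\lesssim 2^{k(n-n/s)}r_{0}^{n-2}$, while for $k\geq 0$ Lemma \ref{le3.3} gives $\int_{A_{k}}V\lesssim(1+2^{k})^{N_{1}}(2^{k}r_{0})^{n-2}$. The $f$-factor is estimated in the standard way by $(2^{k}r_{0})^{n/(s/\beta_{2})'-2(\beta_{1}-\beta_{2})}M_{2(\beta_{1}-\beta_{2}),(s/\beta_{2})'}f(x)$.

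A direct computation then shows that all powers of $r_{0}$ cancel on each annulus: indeed $n\beta_{2}/s+n/(s/\beta_{2})'=n$, and combining this with the exponents coming from the kernel and from $\int V$ produces exactly $0$. What remains in each summand is a pure power of $2^{k}$: for $k\leq 0$ one gets $2^{k\beta_{2}(2-n/s)}$, which is summable since $s>n/2$; for $k\geq 0$ one gets a factor bounded by $2^{k(N_{1}\beta_{2}-N_{0})}$, which is summable provided $N_{0}$ is chosen strictly larger than $N_{1}\beta_{2}$. Summing over $k$ therefore yields the asserted bound by $M_{2(\beta_{1}-\beta_{2}),(s/\beta_{2})'}f(x)$.

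The main obstacle will be the careful bookkeeping of the many scaling exponents and verifying their exact cancellation, together with splitting the two regimes $k\leq 0$ and $k>0$ and invoking the correct estimate on $\int_{A_{k}}V$ in each: Lemma \ref{le3.3.0} is essential on small scales (to avoid a spurious divergence at $k\to-\infty$), while Lemma \ref{le3.3} combined with the decay $(1+|x-y|m_{V}(x))^{-N_{0}}$ from Lemma \ref{le3.2} is what drives convergence as $k\to+\infty$. A minor point is that the hypothesis $\beta_{2}\leq\beta_{1}<n/2\leq s$ guarantees $s/\beta_{2}>1$, so H\"older is legitimate and the conjugate exponent $(s/\beta_{2})'$ is well-defined.
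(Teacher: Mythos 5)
Your proposal is correct and follows essentially the same route as the paper: kernel bound from Lemma \ref{le3.2}, dyadic annuli at the scale $1/m_{V}(x)$, H\"older with exponents $s/\beta_{2}$ and $(s/\beta_{2})'$, Lemma \ref{le3.3.0} for the small annuli, and a large-$N$ choice to absorb the growth on the large annuli. The only (immaterial) difference is that for $k\geq 0$ the paper controls $\int_{B(x,2^{k}r)}V$ directly by the doubling property (\ref{eq-double}) rather than by Lemma \ref{le3.3}, which gives an equivalent bound.
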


\begin{proof}
Let $r={1}/{m_{V}(x)}$. By Lemma \ref{le3.2} and H\"older's inequality, we have
\begin{eqnarray*}
&&|(-\Delta+V)^{-\beta_{1}}V^{\beta_{2}}(x)f(x)|\\
&&\lesssim\sum^{\infty}_{k=-\infty}\int_{2^{k-1}r \leq|x-y|\leq 2^{k}r}\frac{1}{(1+2^{k}rm_{V}(x_{0}))^{N}}\frac{1}{(2^{k}r)^{n-2{\beta_{1}}}}V(y)^{\beta_{2}}|f(y)|dy\\
&&\lesssim \sum^{\infty}_{k=-\infty}\frac{(2^{k}r)^{2\beta_{2}}}{(1+2^{k})^{N}}
\left(\frac{1}{(2^{k}r)^{n}}\int_{B(x,2^{k}r)}V(y)dy\right)^{\beta_{2}}M_{2(\beta_{1}-\beta_{2}),(\frac{s}{\beta_{2}})'}(f)(x).\\
\end{eqnarray*}
For $k\geq 1$, because $V(y)dy$ is a doubling measure, we have
\begin{eqnarray*}
\frac{(2^{k}r)^{2}}{(2^{k}r)^{n}}\int_{B(x,2^{k}r)}V(y)dy &\lesssim& C_{0}^{k}\cdot 2^{(2-n)k} \frac{r^{2}}{r^{n}}\int_{B(x,r)}V(y)dy\\
&\lesssim&(2^{k})^{k_{0}},
\end{eqnarray*}
where $k_{0}=2-n+\log_{2}C_{0}$. For $k\leq 0$, Lemma \ref{le3.3.0} implies that
\begin{eqnarray*}
\frac{(2^{k}r)^{2}}{(2^{k}r)^{n}}\int_{B(x,2^{k}r)}V(y)dy&\lesssim&\Big(\frac{r}{2^{k}r}\Big)^{\frac{n}{s}-2}\frac{r^{2}}{r^{n}}\int_{B(x,r)}V(y)dy\\
&\lesssim&(2^{k})^{2-\frac{n}{s}}.
\end{eqnarray*}
Taking $N$ large enough, we get
$$|(-\Delta+V)^{-\beta_{1}}V^{\beta_{2}}f(x)| \lesssim M_{2(\beta_{1}-\beta_{2}),(\frac{s}{\beta_{2}})^{'}}f(x).$$
\end{proof}
By Theorem \ref{th-4.5} and the duality, we can obtain
\begin{corollary}\label{coro3.6}
Suppose $V\in B_{s}$ for $s>\frac{n}{2}$.
\begin{itemize}
\item[(1)] If $1<(\frac{s}{\beta_{2}})'<p_{1}<\frac{n}{2\beta_{1}-2\beta_{2}}$ and $\frac{1}{p_{2}}=\frac{1}{p_{1}}-\frac{2\beta_{1}-2\beta_{2}}{n}$, then
$$\|(-\Delta+V)^{-\beta_{1}}V^{\beta_{2}}f\|_{L^{p_{2}}(\mathbb{R}^{n})}\lesssim\|f\|_{L^{p_{1}}(\mathbb{R}^{n})},$$
where $\frac{s}{\beta_{2}}+(\frac{s}{\beta_{2}})'=1$.
\item[(2)] If $1<p_{2}<\frac{s}{\beta_{2}}$ and $\frac{1}{p_{2}}=\frac{1}{p_{1}}-\frac{2\beta_{1}-2\beta_{2}}{n}$, then
$$\|V^{\beta_{2}}(-\Delta+V)^{-\beta_{1}}f\|_{L^{p_{2}}(\mathbb{R}^{n})}\lesssim\|f\|_{L^{p_{1}}(\mathbb{R}^{n})}.$$
\end{itemize}
\end{corollary}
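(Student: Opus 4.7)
Part (1) will follow directly from combining the pointwise estimate in Theorem \ref{th-4.5} with the Hardy--Littlewood--Sobolev type bound for the fractional maximal operator stated in Lemma \ref{le3.5}. Indeed, Theorem \ref{th-4.5} gives
\[
\bigl|(-\Delta+V)^{-\beta_{1}}V^{\beta_{2}} f(x)\bigr| \lesssim M_{2(\beta_{1}-\beta_{2}),(s/\beta_{2})'} f(x),
\]
and with the identifications $\sigma=2(\beta_{1}-\beta_{2})$ and $\gamma=(s/\beta_{2})'$, the hypothesis $1<(s/\beta_{2})'<p_{1}<n/(2\beta_{1}-2\beta_{2})$ is exactly the assumption $1<\gamma<p_{1}<n/\sigma$ of Lemma \ref{le3.5}. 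Taking $L^{p_{2}}$-norms in the pointwise bound and invoking Lemma \ref{le3.5} with $1/p_{2}=1/p_{1}-\sigma/n$ yields (1).

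For (2), I will argue by duality. Since $-\Delta+V$ is nonnegative self-adjoint on $L^{2}(\mathbb{R}^{n})$, the functional calculus produces a self-adjoint $(-\Delta+V)^{-\beta_{1}}$; multiplication by the real nonnegative function $V^{\beta_{2}}$ is also self-adjoint. Hence, tested against $f,g\in C_{c}^{\infty}(\mathbb{R}^{n})$---where the kernel bound in Lemma \ref{le3.2} guarantees absolute convergence of all integrals involved---
\[
\bigl\langle V^{\beta_{2}}(-\Delta+V)^{-\beta_{1}} f,\,g\bigr\rangle
=\bigl\langle f,\,(-\Delta+V)^{-\beta_{1}}V^{\beta_{2}} g\bigr\rangle,
\]
so $V^{\beta_{2}}(-\Delta+V)^{-\beta_{1}}$ and $(-\Delta+V)^{-\beta_{1}}V^{\beta_{2}}$ are formal adjoints. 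Applying part (1) with $(p_{1},p_{2})$ replaced by $(p_{2}',p_{1}')$ and using the standard identity $\|T\|_{L^{p_{1}}\to L^{p_{2}}}=\|T^{*}\|_{L^{p_{2}'}\to L^{p_{1}'}}$ delivers the desired bound once the exponent check $1<(s/\beta_{2})'<p_{2}'<n/(2\beta_{1}-2\beta_{2})$ is verified; the left inequality rearranges to the hypothesis $p_{2}<s/\beta_{2}$, and the right one follows from $1/p_{2}=1/p_{1}-2(\beta_{1}-\beta_{2})/n$ together with the natural assumption $p_{1}>1$.

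The main subtlety I anticipate is the rigorous upgrade of the adjoint identity from a sesquilinear-form identity on $C_{c}^{\infty}(\mathbb{R}^{n})$ to a bona fide $L^{p}$-$L^{p'}$ duality. This is handled by standard density arguments combined with the Gaussian estimate of Lemma \ref{le3.1} and the doubling property \eqref{eq-double} of $V(y)\,dy$, both of which let one control the relevant kernels on compactly supported functions and pass to the limit in the Lebesgue norms. Once the adjoint identity is secured, part (2) reduces to the exponent bookkeeping above and an appeal to part (1).
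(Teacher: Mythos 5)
Your proposal is correct and matches the paper's (very terse) derivation: the paper likewise obtains (1) by combining the pointwise bound of Theorem \ref{th-4.5} with Lemma \ref{le3.5}, and (2) by duality from (1). Your exponent bookkeeping for the dualized indices $(p_{2}',p_{1}')$ is accurate, so nothing is missing.
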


\begin{theorem}\label{th-operator}
Suppose that $V\in B_{s}$, $s\geq\frac{n}{2}$, $\alpha\in(-\infty,0]$, $\lambda\in(0,n)$. Let $1<q<\infty$,
 $1<\beta_{2}\leq\beta_{1}<\frac{n}{2}$ and  $1<p_{2}<\frac{s}{\beta_{2}}$ with $\frac{1}{p_{1}}-\frac{1}{p_{2}}=\frac{2\beta_{1}-2\beta_{2}}{n}$. If $\frac{\lambda}{q}-\frac{1}{p_{1}}+\frac{2\beta_{1}}{n}<\theta<\frac{\lambda}{q}+1-\frac{1}{p_{1}}$, then
$$ \|V^{\beta_{2}}(-\Delta+V)^{-\beta_{1}}f\|_{L^{p_{2},q,\lambda}_{\alpha,\theta, V}}\lesssim\|f\|_{L^{p_{1},q,\lambda}_{\alpha,\theta, V}}.$$
\end{theorem}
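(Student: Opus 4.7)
The proof will follow the blueprint of Theorem \ref{th-CZ}: fix an arbitrary ball $B(x_{0},r)$, write $f=\sum_{j\in\mathbb{Z}}f_{j}$ with $f_{j}=f\chi_{E_{j}}$, and split
$$\sum_{k\leq 0}|E_{k}|^{\theta q}\|\chi_{k}V^{\beta_{2}}(-\Delta+V)^{-\beta_{1}}f\|^{q}_{L^{p_{2}}}\leq A_{1}+A_{2}+A_{3},$$
corresponding to $j\leq k-2$, $k-1\leq j\leq k+1$, $j\geq k+2$. We then multiply by $(1+rm_{V}(x_{0}))^{\alpha}r^{-\lambda n}$ and dominate each piece by $\|f\|_{L^{p_{1},q,\lambda}_{\alpha,\theta,V}}^{q}$. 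For any index $j$, applying the Morrey norm at the ball $B(x_{0},2^{j+1}r)$ yields the pointwise bound
$$\|\chi_{j}f\|_{L^{p_{1}}}\lesssim (1+2^{j}rm_{V}(x_{0}))^{-\alpha/q}(2^{j}r)^{\lambda n/q}|E_{j}|^{-\theta}\|f\|_{L^{p_{1},q,\lambda}_{\alpha,\theta,V}},$$
which is the workhorse for inserting the Morrey norm.

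For the diagonal term $A_{2}$, I would simply invoke Corollary \ref{coro3.6}(2) (valid since $1<p_{2}<s/\beta_{2}$) to get $\|\chi_{k}V^{\beta_{2}}(-\Delta+V)^{-\beta_{1}}f_{j}\|_{L^{p_{2}}}\lesssim \|\chi_{j}f\|_{L^{p_{1}}}$, and then use the pointwise bound above together with $|E_{j}|\sim|E_{k}|$ and $(1+2^{j}rm_{V}(x_{0}))\sim(1+2^{k}rm_{V}(x_{0}))$ for $|j-k|\leq 1$. Since $\alpha\leq 0$ and $k\leq 0$, the factor $(1+rm_{V}(x_{0}))^{\alpha}(1+2^{k}rm_{V}(x_{0}))^{-\alpha}\leq 1$, leaving a convergent geometric series $\sum_{k\leq 0}2^{k\lambda n}$.

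For the off-diagonal parts, I would use Lemma \ref{le3.2} to bound the kernel of $(-\Delta+V)^{-\beta_{1}}$, and Lemma \ref{le2.1}(c) to replace $m_{V}(x)$ by $m_{V}(x_{0})$ (losing a power $1/(k_{0}+1)$) or $m_{V}(x_{0})$ at the larger scale $2^{j}r$. The crux is to handle the extra factor $V^{\beta_{2}}(x)$ inside the $L^{p_{2}}$-norm. Using the reverse H\"older inequality with $p_{2}\beta_{2}\leq s$, I expect
$$\|\chi_{k}V^{\beta_{2}}\|_{L^{p_{2}}}\lesssim |E_{k}|^{1/p_{2}-\beta_{2}}\Bigl(\int_{E_{k}}V\,dy\Bigr)^{\beta_{2}}\lesssim (2^{k}r)^{n/p_{2}-2\beta_{2}}(1+2^{k}rm_{V}(x_{0}))^{N_{0}\beta_{2}},$$
where the second estimate uses Lemma \ref{le3.3}. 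Combining this with the kernel bound and the Hardy--Littlewood--Sobolev relation $n/p_{2}-2\beta_{2}=n/p_{1}-2\beta_{1}$, the resulting exponent of $2^{k}r$ collapses cleanly; the $m_{V}$ powers can be absorbed by choosing $N$ in Lemma \ref{le3.2} as large as needed.

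After these reductions, $A_{1}$ and $A_{3}$ become essentially geometric sums in the relative index $j-k$. The summation in $A_{1}$ produces the exponent $n(1-1/p_{1}+\lambda/q-\theta)$, requiring $\theta<\lambda/q+1-1/p_{1}$, while $A_{3}$ produces the exponent $n(\lambda/q-\theta-1/p_{1}+2\beta_{1}/n)$, requiring $\theta>\lambda/q-1/p_{1}+2\beta_{1}/n$, which are exactly the hypotheses. The main obstacle I anticipate is bookkeeping the three different ``small'' powers (the $m_{V}(x)$ vs.\ $m_{V}(x_{0})$ mismatch from Lemma \ref{le2.1}, the $(1+2^{k}rm_{V}(x_{0}))^{N_{0}\beta_{2}}$ arising from the $V^{\beta_{2}}$ estimate, and the $(1+2^{j}rm_{V}(x_{0}))^{-\alpha/q}$ coming in from the Morrey identity); choosing $N=[-\alpha/q+N_{0}\beta_{2}+2](k_{0}+1)$ at the end suffices to render all of them harmless.
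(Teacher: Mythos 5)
Your proposal follows essentially the same route as the paper's proof: the same dyadic annular decomposition into diagonal and two off-diagonal pieces, Corollary \ref{coro3.6}(2) for the diagonal term, Lemma \ref{le3.2} for the kernel together with the reverse H\"older inequality (using $\beta_{2}p_{2}\leq s$) and Lemma \ref{le3.3} to convert $\|\chi_{k}V^{\beta_{2}}\|_{L^{p_{2}}}$ into $(2^{k}r)^{n/p_{2}-2\beta_{2}}$ times a harmless power of $(1+2^{k}rm_{V}(x_{0}))$, and finally the same geometric-series conditions on $\theta$. The details (the identity $n/p_{2}-2\beta_{2}=n/p_{1}-2\beta_{1}$, the insertion of the Morrey norm, and the choice of large $N$) all match the paper's argument, so the proposal is correct.
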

\begin{proof}
For any ball $B(x_{0},r)$, write
$$f(y)=\sum^{\infty}_{j=-\infty}f(y)\chi_{E_{j}}(y)=\sum^{\infty}_{j=-\infty}f_{j}(y),$$
where $E_{j}=B(x_{0},2^{j}r)\backslash B(x_{0},2^{j-1}r)$. Hence, we have
\begin{eqnarray*}
&&(1+rm_{V}(x_{0}))^{\alpha}r^{-\lambda n}\sum^{0}_{k=-\infty}|E_{k}|^{\theta q}\|\chi_{k}V^{\beta_{2}}(-\Delta+V)^{-\beta_{1}}f\|^{q}_{L^{p_{2}}(\mathbb{R}^{n})}\\
&&\lesssim(1+rm_{V}(x_{0}))^{\alpha}r^{-\lambda n}\sum^{0}_{k=-\infty}|E_{k}|^{\theta q}\left(\sum^{k-2}_{j=-\infty}\|\chi_{k}V^{\beta_{2}}(-\Delta+V)^{-\beta_{1}}f_{j}\|_{L^{p_{2}}(\mathbb{R}^{n})}\right)^{q}\\
&&+(1+rm_{V}(x_{0}))^{\alpha}r^{-\lambda n}\sum^{0}_{k=-\infty}|E_{k}|^{\theta q}\left(\sum^{k+1}_{j=k-1}\|\chi_{k}V^{\beta_{2}}(-\Delta+V)^{-\beta_{1}}f_{j}\|_{L^{p_{2}}(\mathbb{R}^{n})}\right)^{q}\\
&&+(1+rm_{V}(x_{0}))^{\alpha}r^{-\lambda n}\sum^{0}_{k=-\infty}|E_{k}|^{\theta q}\left(\sum^{\infty}_{j=k+2}\|\chi_{k}V^{\beta_{2}}(-\Delta+V)^{-\beta_{1}}f_{j}\|_{L^{p_{2}}(\mathbb{R}^{n})}\right)^{q}\\
&&=M_{1}+M_{2}+M_{3}.
\end{eqnarray*}

We first estimate $M_{2}$.  For $1<p_{2}<\frac{s}{\beta_{2}}$, by (2) of Corollary \ref{coro3.6},  we can get
\begin{eqnarray*}
M_{2}&\lesssim&\frac{(1+rm_{V}(x_{0}))^{\alpha}}{r^{\lambda n}}\sum^{0}_{k=-\infty}|E_{k}|^{\theta q}\left(\sum^{k+1}_{j=k-1}\|f_{j}\|_{L^{p_{1}}(\mathbb{R}^{n})}\right)^{q}
\lesssim\|f\|^{q}_{L^{p_{1},q,\lambda}_{\alpha,\theta,V}}.
\end{eqnarray*}

Now we deal with the terms $M_{1}$ and $M_{3}$. We choose $N$ large enough such that
$$(N/k_{0}+1)-(\log_{2}C_{0}+1)\beta_{2}+{\alpha}/{q}>0$$ and take a positive $N_{1}<(N/k_{0}+1)-(\log_{2}C_{0}+1)\beta_{2}$. For $M_{1}$, note that if $x\in E_{k}$, $y\in E_{j}$ and $j\leq k-2$, then $|x-y|\sim 2^{k}r$. By Lemmas \ref{le3.2} \& \ref{le3.3}, we use H\"older's inequality to obtain
\begin{eqnarray*}
&&\|\chi_{k}V^{\beta_{2}}(-\Delta+V)^{-\beta_{1}}f_{j}\|_{L^{p_{2}}(\mathbb{R}^{n})}\\
&&\lesssim \left(\int_{E_{k}}\mid V^{\beta_{2}}(x)\int_{E_{j}}\frac{1}{(1+\mid x-y\mid m_{v}(x))^{N}}\frac{1}{|x-y|^{n-2{\beta_{1}}}}f(y)dy\mid^{p_{2}}dx\right)^{\frac{1}{p_{2}}}\\
&&\lesssim\frac{1}{(1+2^{k}rm_{V}(x_{0}))^{N/k_{0}+1}}\frac{1}{(2^{k}r)^{n-2{\beta_{1}}}}\int_{E_{j}}|f(y)|dy\left(\int_{E_{k}}|V(x)|^{\beta_{2} p_{2}}dx\right)^{\frac{1}{p_{2}}}\\
&&\lesssim\frac{|E_{j}|^{1-\frac{1}{p_{1}}}|E_{k}|^{\frac{1}{p_{2}}}}{(1+2^{k}rm_{V}(x_{0}))^{N/k_{0}+1}}\frac{1}{(2^{k}r)^{n-2{\beta_{1}}}}\|f_{j}\|_{L^{p_{1}}(\mathbb{R}^{n})}
\left(\frac{1}{|E_{k}|}\int_{E_{k}}V(x)^{s}dx\right)^{\frac{\beta_{2}}{s}}\\
&&\lesssim\frac{|E_{j}|^{1-\frac{1}{p_{1}}}|E_{k}|^{\frac{1}{p_{2}}}}{(1+2^{k}rm_{V}(x_{0}))^{N/k_{0}+1}}\frac{1}{(2^{k}r)^{n-2{\beta_{1}}}}\|f_{j}\|_{L^{p_{1}}(\mathbb{R}^{n})}
\left(\frac{1}{|B_{k}|}\int_{B_{k}}V(x)dx\right)^{\beta_{2}}\\
&&\lesssim\frac{1}{(1+2^{k}rm_{V}(x_{0}))^{N_{1}}}\frac{1}{(2^{k}r)^{n-2\beta_{1}+2\beta_{2}}}|E_{k}|^{\frac{1}{p_{2}}}|E_{j}|^{1-\frac{1}{p_{1}}}\|f_{j}\|_{L^{p_{1}}(\mathbb{R}^{n})},
\end{eqnarray*}
where $\frac{1}{p_{1}}-\frac{1}{p_{2}}=\frac{2\beta_{1}-2\beta_{2}}{n}$. Since $\frac{\lambda}{q}-\frac{1}{p_{1}}+\frac{2\beta_{1}}{n}<\theta<\frac{\lambda}{q}+1-\frac{1}{p_{1}}$, we obtain
\begin{eqnarray*}
M_{1}&\lesssim&(1+rm_{V}(x_{0}))^{\alpha}r^{-\lambda n}\sum^{0}_{k=-\infty}|E_{k}|^{\theta q}\\
&&\times\left(\sum^{k-2}_{j=-\infty}\frac{1}{(1+2^{k}rm_{V}(x_{0}))^{N_{1}}}\frac{1}{(2^{k}r)^{n-2\beta_{1}+2\beta_{2}}}|E_{k}|^{\frac{1}{p_{2}}}|E_{j}|^{1-\frac{1}{p_{1}}}\|f_{j}\|_{L^{p_{1}}(\mathbb{R}^{n})}\right)^{q}\\
&\lesssim&(1+rm_{V}(x_{0}))^{\alpha}r^{-\lambda n}\sum^{0}_{k=-\infty}|E_{k}|^{\theta q}\\
&&\times\left(\sum^{k-2}_{j=-\infty}\frac{(1+2^{j}rm_{V}(x_{0}))^{-\frac{\alpha}{q}}}{(1+2^{k}rm_{V}(x_{0}))^{N_{1}}}\frac{(2^{j}r)^{\frac{\lambda n}{q}}|E_{j}|^{-\theta}}{(2^{k}r)^{n-2\beta_{1}+2\beta_{2}}}|E_{k}|^{\frac{1}{p_{2}}}|E_{j}|^{1-\frac{1}{p_{1}}}\right)^{q}
\|f\|^{q}_{L^{p_{1},\lambda,q}_{\alpha,v,\theta}}\\
&\lesssim&(1+rm_{V}(x_{0}))^{\alpha}r^{-\lambda n}\sum^{0}_{k=-\infty}|E_{k}|^{\lambda}\left(\sum^{k-2}_{j=-\infty}2^{(j-k)n(\frac{\lambda}{q}-\theta-\frac{1}{p_{1}}+1)}\right)^{q}
\|f\|^{q}_{L^{p_{1},\lambda,q}_{\alpha,v,\theta}}\\
&\lesssim&\|f\|^{q}_{L^{p_{1},\lambda,q}_{\alpha,v,\theta}}.
\end{eqnarray*}

For $M_{3}$,  note that when $x\in E_{k}$, $y\in E_{j}$ and $j\geq k+2$, then $|x-y|\sim2^{j}r$.  Similar to $E_{1}$, we have
\begin{eqnarray*}
&&\|\chi_{k}V^{\beta_{2}}(-\Delta+V)^{-\beta_{1}}f_{j}\|_{L^{p_{2}}(\mathbb{R}^{n})}\\
&&\lesssim\frac{1}{(1+2^{j}rm_{V}(x_{0}))^{N/k_{0}+1}}\frac{1}{(2^{j}r)^{n-2{\beta_{1}}}}\int_{E_{j}}|f(y)|dy\left(\int_{E_{k}}|V(x)|^{\beta_{2} p_{2}}dx\right)^{\frac{1}{p_{2}}}\\
&&\lesssim\frac{1}{(1+2^{j}rm_{V}(x_{0}))^{N_{1}}}|E_{j}|^{\frac{2\beta_{1}}{n}-\frac{1}{p_{1}}}|E_{k}|^{\frac{1}{p_{2}}-\frac{2\beta_{2}}{n}}\|f_{j}\|_{L^{p_{1}}(\mathbb{R}^{n})},
\end{eqnarray*}
where $\frac{1}{p_{1}}-\frac{1}{p_{2}}=\frac{2\beta_{1}-2\beta_{2}}{n}$. Since $\frac{\lambda}{q}-\frac{1}{p_{1}}+\frac{2\beta_{1}}{n}<\theta<\frac{\lambda}{q}+1-\frac{1}{p_{1}}$, we obtain
\begin{eqnarray*}
M_{3}&\lesssim&(1+rm_{V}(x_{0}))^{\alpha}r^{-\lambda n}\sum^{0}_{k=-\infty}|E_{k}|^{\theta q}\\
&&\times\left(\sum^{\infty}_{j=k+2}\frac{1}{(1+2^{j}rm_{V}(x_{0}))^{N_{1}}}|E_{j}|^{\frac{2\beta_{1}}{n}-\frac{1}{p_{1}}}|E_{k}|^{\frac{1}{p_{2}}-\frac{2\beta_{2}}{n}}\|f_{j}\|_{L^{p_{1}}(\mathbb{R}^{n})}\right)^{q}\\
&\lesssim&(1+rm_{V}(x_{0}))^{\alpha}r^{-\lambda n}\sum^{0}_{k=-\infty}|E_{k}|^{\theta q}\\
&&\times\left(\sum^{\infty}_{j=k+2}\frac{(1+2^{j}rm_{V}(x_{0}))^{-\frac{\alpha}{q}}(2^{j}r)^{\frac{\lambda n}{q}}|E_{j}|^{-\theta}}{(1+2^{j}rm_{V}(x_{0}))^{N_{1}}}
\frac{|E_{k}|^{\frac{1}{p_{2}}-\frac{2\beta_{2}}{n}}}{|E_{j}|^{\frac{2\beta_{1}}{n}-\frac{1}{p_{1}}}}\right)^{q}
\|f\|^{q}_{L^{p_{1},q,\lambda}_{\alpha,\theta,V}}\\
&\lesssim&(1+rm_{V}(x_{0}))^{\alpha}r^{-\lambda n}\sum^{0}_{k=-\infty}|E_{k}|^{\lambda}\left(\sum^{\infty}_{j=k+2}2^{(k-j)n(\theta-\frac{\lambda}{q}+\frac{1}{p_{1}}+\frac{2\beta_{1}}{n})}\right)^{q}
\|f\|^{q}_{L^{p_{1},q,\lambda}_{\alpha,\theta,V}}\\
&\lesssim&\|f\|^{q}_{L^{p_{1},q,\lambda}_{\alpha,\theta,V}}.
\end{eqnarray*}
Choosing $N$ large enough, we obtain
$$\|V^{\beta_{2}}(-\Delta+V)^{-\beta_{1}}f\|_{L^{p_{2},q,\lambda}_{\alpha,\theta,V}}
\lesssim\|f\|_{L^{p_{1},q,\lambda}_{\alpha,\theta,V}}.$$
\end{proof}

\begin{theorem}\label{th-dual-operator}
Suppose that $V\in B_{s}$, $s\geq\frac{n}{2}$, $\alpha\in (-\infty,0]$, $\lambda\in (0,n)$ and $1<q<\infty.$
Let $0<\beta_{2}\leq\beta_{1}<\frac{n}{2}$, $\frac{s}{s-\beta_{2}}<p_{1}<\frac{n}{2\beta_{1}-2\beta_{2}}$ with $\frac{1}{p_{2}}=\frac{1}{p_{1}}-\frac{2\beta_{1}-2\beta_{2}}{n}$. If $\frac{\lambda}{q}-\frac{1}{p_{2}}<\theta<\frac{\lambda}{q}-\frac{1}{p_{2}}+1-\frac{2\beta_{1}}{n}$, then
$$ \|(-\Delta+V)^{-\beta_{1}}V^{\beta_{2}}f\|_{L^{p_{2},q,\lambda}_{\alpha,\theta,V}}
\lesssim\|f\|_{L^{p_{1},q,\lambda}_{\alpha,\theta,V}}.$$
\end{theorem}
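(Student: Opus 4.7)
The proof mirrors that of Theorem~\ref{th-operator}, with one structural modification: since $V^{\beta_2}$ now sits to the right of $(-\Delta+V)^{-\beta_1}$, the potential factor is integrated on the $y$-side, and Hölder's inequality must be applied with exponent $p_1'$ (rather than $p_2$). As before, for a fixed ball $B(x_0,r)$ I will write $f = \sum_{j\in\mathbb{Z}} f_j$ with $f_j = f\chi_{E_j}$, and split
\[
(1+rm_V(x_0))^\alpha r^{-\lambda n}\sum_{k\le 0}|E_k|^{\theta q}\|\chi_k T^\ast_{\beta_1,\beta_2}f\|^q_{L^{p_2}}\;\lesssim\; M_1+M_2+M_3,
\]
where $M_1,M_2,M_3$ correspond to the $j$-ranges $j\le k-2$, $k-1\le j\le k+1$, and $j\ge k+2$. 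For the near-diagonal term $M_2$ the hypothesis $s/(s-\beta_2)<p_1<n/(2\beta_1-2\beta_2)$ lets us invoke Corollary~\ref{coro3.6}(1) to get $\|T^\ast_{\beta_1,\beta_2} f_j\|_{L^{p_2}}\lesssim \|f_j\|_{L^{p_1}}$, and the resulting triple is absorbed into $\|f\|^q_{L^{p_1,q,\lambda}_{\alpha,\theta,V}}$ exactly as in the proof of Theorem~\ref{th-operator}.

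For the far terms $M_1$ and $M_3$, I would apply Lemma~\ref{le3.2} to bound the kernel of $(-\Delta+V)^{-\beta_1}$, then H\"older's inequality with $(p_1,p_1')$ on the $y$-integral:
\[
\int_{E_j}V(y)^{\beta_2}|f(y)|\,dy\le \Bigl(\int_{E_j}V^{\beta_2 p_1'}\Bigr)^{1/p_1'}\|f_j\|_{L^{p_1}}.
\]
The assumption $p_1>(s/\beta_2)'$ forces $\beta_2 p_1'<s$, so reverse H\"older combined with Lemma~\ref{le3.3} gives
\[
\Bigl(\int_{E_j}V^{\beta_2 p_1'}\Bigr)^{1/p_1'}\lesssim |E_j|^{1/p_1'}\Bigl(\frac{1}{|B_j|}\int_{B_j}V\Bigr)^{\beta_2}\lesssim |E_j|^{1/p_1'-2\beta_2/n}(1+2^j rm_V(x_0))^{N_0\beta_2},
\]
where $N_0>\log_2 C_0+1$. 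On the $x$-side I would use the geometry $|x-y|\sim 2^k r$ (for $j\le k-2$) or $|x-y|\sim 2^j r$ (for $j\ge k+2$), together with Lemma~\ref{le2.1}(c), to replace $(1+|x-y|m_V(x))^{-N}$ by $(1+2^k rm_V(x_0))^{-N/(k_0+1)}$ or $(1+2^j rm_V(x_0))^{-N/(k_0+1)}$ respectively.

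Combining these ingredients produces (for $M_1$) a summand of the form
\[
\frac{(1+2^j rm_V(x_0))^{N_0\beta_2}}{(1+2^k rm_V(x_0))^{N/(k_0+1)}}|E_k|^{1/p_2-1+2\beta_1/n}|E_j|^{1/p_1'-2\beta_2/n}\|f_j\|_{L^{p_1}},
\]
and (for $M_3$) one of the form $(1+2^j rm_V(x_0))^{N_0\beta_2-N/(k_0+1)}|E_k|^{1/p_2}|E_j|^{-1/p_2}\|f_j\|_{L^{p_1}}$. I then insert the Morrey bound (obtained by applying the definition of $\|f\|_{L^{p_1,q,\lambda}_{\alpha,\theta,V}}$ to the ball $B(x_0,2^j r)$ and retaining only the $l=0$ term)
\[
\|f_j\|_{L^{p_1}}\le (1+2^j rm_V(x_0))^{-\alpha/q}(2^j r)^{\lambda n/q}|E_j|^{-\theta}\|f\|_{L^{p_1,q,\lambda}_{\alpha,\theta,V}}.
\]
Using the relation $1/p_1-1/p_2=2(\beta_1-\beta_2)/n$, which rewrites $1-1/p_1-2\beta_2/n=1-1/p_2-2\beta_1/n$, the total $|E_j|$-exponent in $M_1$ is $1-1/p_2-2\beta_1/n-\theta+\lambda/q$, positive precisely under $\theta<\lambda/q+1-1/p_2-2\beta_1/n$; this makes the $j$-sum from $-\infty$ to $k-2$ a convergent geometric series. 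Similarly the $|E_j|$-exponent in $M_3$ is $-1/p_2+\lambda/q-\theta$, negative precisely under $\theta>\lambda/q-1/p_2$, giving convergence of $\sum_{j\ge k+2}$.

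\textbf{Main obstacle.} The delicate step is the auxiliary-function bookkeeping in Step~3--4: I must choose $N$ with $N/(k_0+1)>N_0\beta_2-\alpha/q$ so that the factor $(1+2^k rm_V(x_0))^{-N/(k_0+1)+N_0\beta_2}$ decays (for $M_1$) and likewise $(1+2^j rm_V(x_0))^{N_0\beta_2-N/(k_0+1)-\alpha/q}$ decays for $M_3$. This is possible because $\alpha\le 0$. After these decays are secured, the outer $k$-sum collapses to a convergent geometric series of $(2^k r)^{\lambda n}$, which cancels $r^{-\lambda n}$, and the factor $(1+rm_V(x_0))^\alpha\le 1$ is absorbed trivially. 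The algebraic identity $1-1/p_1-2\beta_2/n=1-1/p_2-2\beta_1/n$ is what guarantees that the index window stated in the theorem is exactly the window produced by the two far-field convergence conditions.
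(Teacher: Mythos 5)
Your proposal follows essentially the same route as the paper's proof: the same annular decomposition into the ranges $j\le k-2$, $|j-k|\le 1$, $j\ge k+2$, Corollary \ref{coro3.6}(1) for the near-diagonal piece, and Lemma \ref{le3.2} combined with H\"older in $(p_1,p_1')$ on the $y$-integral plus the reverse H\"older inequality and Lemma \ref{le3.3} for the far pieces, yielding exactly the paper's geometric-series exponents $\theta-\frac{\lambda}{q}+\frac{1}{p_2}-1+\frac{2\beta_1}{n}$ and $\theta-\frac{\lambda}{q}+\frac{1}{p_2}$. Your bookkeeping of the auxiliary-function powers and of the identity $1-\frac{1}{p_1}-\frac{2\beta_2}{n}=1-\frac{1}{p_2}-\frac{2\beta_1}{n}$ is correct and in fact more explicit than the paper's.
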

\begin{proof}
For any ball $B(x_{0}, r)$, let $E_{j}=B(x_{0},2^{j}r)\backslash B(x_{0},2^{j-1}r)$. We can decompose $f$ as follows.
$$f(y)=\sum^{\infty}_{j=-\infty}f(y)\chi_{E_{j}}(y)=\sum^{\infty}_{j=-\infty}f_{j}(y).$$
Similar to the proof of Theorem \ref{th-operator}, we have
\begin{eqnarray*}
&&(1+rm_{V}(x_{0}))^{\alpha}r^{-\lambda n}\sum^{0}_{k=-\infty}|E_{k}|^{\theta q}\|\chi_{k}(-\Delta+V)^{-\beta_{1}}V^{\beta_{2}}f\|^{q}_{L^{p_{2}}(\mathbb{R}^{n})}\\
&&\lesssim(1+rm_{V}(x_{0}))^{\alpha}r^{-\lambda n}\sum^{0}_{k=-\infty}|E_{k}|^{\theta q}\left(\sum^{k-2}_{j=-\infty}\|\chi_{k}(-\Delta+V)^{-\beta_{1}}V^{\beta_{2}}f_{j}\|_{L^{p_{2}}(\mathbb{R}^{n})}\right)^{q}\\
&&+C(1+rm_{V}(x_{0}))^{\alpha}r^{-\lambda n}\sum^{0}_{k=-\infty}|E_{k}|^{\theta q}\left(\sum^{k+1}_{j=k-1}\|\chi_{k}(-\Delta+V)^{-\beta_{1}}V^{\beta_{2}}f_{j}\|_{L^{p_{2}}(\mathbb{R}^{n})}\right)^{q}\\
&&+C(1+rm_{V}(x_{0}))^{\alpha}r^{-\lambda n}\sum^{0}_{k=-\infty}|E_{k}|^{\theta q}\left(\sum^{\infty}_{j=k+2}\|\chi_{k}(-\Delta+V)^{-\beta_{1}}V^{\beta_{2}}f_{j}\|_{L^{p_{2}}(\mathbb{R}^{n})}\right)^{q}\\
&&=L_{1}+L_{2}+L_{3}.
\end{eqnarray*}

For $L_{2}$ , because $1<\frac{s}{s-\beta_{2}}<p_{1}<\frac{n}{2\beta_{1}-\beta_{2}}$, we use Corollary \ref{coro3.6} to obtain
\begin{eqnarray*}
L_{2}&\lesssim&\frac{(1+rm_{V}(x_{0}))^{\alpha}}{r^{\lambda n}}\sum^{0}_{k=-\infty}|E_{k}|^{\theta q}\left(\sum^{k+1}_{j=k-1}\|f_{j}\|_{L^{p_{1}}(\mathbb{R}^{n})}\right)^{q}
\lesssim\|f\|^{q}_{L^{p_{1},q,\lambda}_{\alpha,\theta,V}}.
\end{eqnarray*}

For $L_{1}$,  we can see that if $x\in E_{k}$ and $y\in E_{j}$,  then $|x-y|\sim 2^{k}r$ for $j\leq k-2$. By H\"older's inequality and the fact that $V\in B_{s}$, we deduce from Lemmas \ref{le3.2} \& \ref{le3.3} that
\begin{eqnarray*}
&&\|\chi_{k}(-\Delta+V)^{-\beta_{1}}V^{\beta_{2}}f_{j}\|^{q}_{L^{p_{2}}(\mathbb{R}^{n})}\\
&&\lesssim\frac{1}{(1+2^{k}rm_{V}(x_{0}))^{N/k_{0}+1}}\frac{|E_{k}|^{\frac{1}{p_{2}}}}{(2^{k}r)^{n-2{\beta_{1}}}}\int_{E_{j}}V(x)^{\beta_{2}}|f(y)|dy\\
&&\lesssim\frac{1}{(1+2^{k}rm_{V}(x_{0}))^{N/k_{0}+1}}\frac{|E_{k}|^{\frac{1}{p_{2}}}}{(2^{k}r)^{n-2{\beta_{1}}}}|E_{j}|^{1-\frac{1}{p_{1}}}
\left(\frac{1}{|B_{j}|}\int_{B_{j}}V(x)dx\right)^{\beta_{2}}\|f_{j}\|_{L^{p_{1}}(\mathbb{R}^{n})}\\
&&\lesssim\frac{1}{(1+2^{k}rm_{V}(x_{0}))^{N_{2}}}\frac{|E_{k}|^{\frac{1}{p_{2}}}}{(2^{k}r)^{n-2{\beta_{1}}}}
|E_{j}|^{1-\frac{1}{p_{1}}}(2^{j}r)^{-2\beta_{2}}\|f_{j}\|_{L^{p_{1}}(\mathbb{R}^{n})},
\end{eqnarray*}
where $\frac{1}{p_{2}}=\frac{1}{p_{1}}-\frac{2\beta_{1}-2\beta_{2}}{n}$ and $N_{2}<(N/k_{0}+1)-(\log_{2}C_{0}+1)\beta_{2}$. Since $\frac{\lambda}{q}-\frac{1}{p_{2}}<\theta<\frac{\lambda}{q}-\frac{1}{p_{2}}+1-\frac{2\beta_{1}}{n}$, we obtain
\begin{eqnarray*}
L_{1}&\lesssim&(1+rm_{V}(x_{0}))^{\alpha}r^{-\lambda n}\sum^{0}_{k=-\infty}|E_{k}|^{\theta q}\\
&&\times\left(\sum^{k-2}_{j=-\infty}\frac{1}{(1+2^{k}rm_{V}(x_{0}))^{N_{2}}}
\frac{|E_{k}|^{\frac{1}{p_{2}}}}{(2^{k}r)^{n-2{\beta_{1}}}}|E_{j}|^{1-\frac{1}{p_{1}}}(2^{j}r)^{-2\beta_{2}}\|f_{j}\|_{L^{p_{1}}(\mathbb{R}^{n})}\right)^{q}\\
&\lesssim&(1+rm_{V}(x_{0}))^{\alpha}r^{-\lambda n}\sum^{0}_{k=-\infty}|E_{k}|^{\theta q}\\
&&\left(\sum^{k-2}_{j=-\infty}\frac{(1+2^{j}rm_{V}(x_{0}))^{-\frac{\alpha}{q}}}{(1+2^{k}rm_{V}(x_{0}))^{N_{2}}}
\frac{(2^{j}r)^{\frac{\lambda n}{q}}|E_{j}|^{-\theta}}{(2^{k}r)^{n-2{\beta_{1}}}}\frac{|E_{k}|^{\frac{1}{p_{2}}}|E_{j}|^{1-\frac{1}{p_{1}}}}{(2^{j}r)^{2\beta_{2}}}
\right)^{q}\|f\|^{q}_{L^{p_{1},\lambda,q}_{\alpha,V,\theta}}\\
&\lesssim&(1+rm_{V}(x_{0}))^{\alpha}r^{-\lambda n}\sum^{0}_{k=-\infty}|E_{k}|^{\lambda}\left(\sum^{k-2}_{j=-\infty}2^{(k-j)n(\theta-\frac{\lambda}{q}+\frac{1}{p_{2}}-1+\frac{2\beta_{1}}{n})}\right)^{q}
\|f\|^{q}_{L^{p_{1},\lambda,q}_{\alpha,\theta,V}}\\
&\lesssim&\|f\|^{q}_{L^{p_{1},q,\lambda}_{\alpha, V,\theta}}.
\end{eqnarray*}

For $L_{3}$,  note that when $x\in E_{k}$, $y\in E_{j}$ and $j\geq k+2$, then $|x-y|\sim2^{j}r$. Similar to $E_{1}$, we have
\begin{eqnarray*}
&&\|\chi_{k}(-\Delta+V)^{-\beta_{1}}V^{\beta_{2}}f_{j}\|^{q}_{L^{p_{2}}(\mathbb{R}^{n})}\\
&&\lesssim\frac{1}{(1+2^{j}rm_{V}(x_{0}))^{N/k_{0}+1}}\frac{|E_{k}|^{\frac{1}{p_{2}}}}{(2^{j}r)^{n-2{\beta_{1}}}}\int_{E_{j}}V(x)^{\beta_{2}}|f(y)|dy\\
&&\lesssim\frac{1}{(1+2^{j}rm_{V}(x_{0}))^{N_{2}}}\frac{|E_{k}|^{\frac{1}{p_{2}}}}{(2^{j}r)^{n-2{\beta_{1}}}}|E_{j}|^{1-\frac{1}{p_{1}}}(2^{j}r)^{-2\beta_{2}}
\|f_{j}\|_{L^{p_{1}}(\mathbb{R}^{n})},
\end{eqnarray*}
where $\frac{1}{p_{2}}=\frac{1}{p_{1}}-\frac{2\beta_{1}-2\beta_{2}}{n}$, and $N_{2}<(N/k_{0}+1)-(\log_{2}C_{0}+1)\beta_{2}$.
Since $\frac{\lambda}{q}-\frac{1}{p_{2}}<\theta<\frac{\lambda}{q}-\frac{1}{p_{2}}+1-\frac{2\beta_{1}}{n}$, we obtain
\begin{eqnarray*}
L_{3}&\lesssim&(1+rm_{V}(x_{0}))^{\alpha}r^{-\lambda n}\sum^{0}_{k=-\infty}|E_{k}|^{\theta q}\\
&&\times\left(\sum^{\infty}_{j=k+2}\frac{1}{(1+2^{j}rm_{V}(x_{0}))^{N_{2}}}\frac{|E_{k}|^{\frac{1}{p_{2}}}}{(2^{j}r)^{n-2{\beta_{1}}}}|E_{j}|^{1-\frac{1}{p_{1}}}(2^{j}r)^{-2\beta_{2}}\|f_{j}\|_{L^{p_{1}}(\mathbb{R}^{n})}\right)^{q}\\
&\lesssim&(1+rm_{V}(x_{0}))^{\alpha}r^{-\lambda n}\sum^{0}_{k=-\infty}|E_{k}|^{\lambda}\left(\sum^{\infty}_{j=k+2}2^{(k-j)n(\theta-\frac{\lambda}{q}+\frac{1}{p_{2}})}\right)^{q}
\|f\|^{q}_{L^{p_{1},q,\lambda}_{\alpha,\theta,V}}\\
&\lesssim&\|f\|^{q}_{L^{p_{1},q,\lambda}_{\alpha,\theta,V}}.
\end{eqnarray*}
Let $N$ large enough. We finally get
$\|(-\Delta+V)^{-\beta_{1}}V^{\beta_{2}}f\|_{L^{p_{2},q,\lambda}_{\alpha,\theta,V}}\lesssim\|f\|_{L^{p_{1},q,\lambda}_{\alpha,\theta,V}}$.
\end{proof}

\section{Boundedness of the commutators on $L^{p,q,\lambda}_{\alpha,\theta,V}(\mathbb{R}^{n})$}\label{sec-5}
In this section, let $b\in BMO(\mathbb{R}^{n})$. We consider the boundedness of commutators $[b, (-\Delta+V)^{-\beta_{1}}V^{\beta_{2}}]$ and its duality  on the generalized Morrey spaces $L^{p,q,\lambda}_{\alpha,\theta,V}(\mathbb{R}^{n})$. For this purpose, we prove the commutator $[b,\  (-\Delta)^{-\beta_{1}}V^{\beta_{2}}]$ is bounded from  $L^{p_{1}}(\mathbb{R}^{n})$ to $L^{p_{2}}(\mathbb{R}^{n})$. For sake of simplicity,  we denote by  $b_{2^{k}r}$ the mean value of $b$ on the ball $B(x_{0}, 2^{k}r)$.
\begin{theorem}\label{th-bdd-Tbeta}
Suppose that $V\in B_{s}$, $s\geq\frac{n}{2}$ and $b\in BMO(\mathbb{R}^{n})$.
\begin{itemize}
\item[(i)] If $0<\beta_{2}\leq\beta_{1}<\frac{n}{2}$ , $\frac{s}{s-\beta_{2}}<p_{1}<\frac{n}{2\beta_{1}-2\beta_{2}}$ , $\frac{1}{p_{2}}=\frac{1}{p_{1}}-\frac{2\beta_{1}-2\beta_{2}}{n}$, then
$$ \Big\|[b,\ (-\Delta+V)^{-\beta_{1}}V^{\beta_{2}}]f\Big\|_{L^{p_{2}}(\mathbb{R}^{n})}\lesssim \|f\|_{L^{p_{1}}(\mathbb{R}^{n})}\|b\|_{BMO}.$$
\item[(ii)] If $1<p_{2}<\frac{s}{\beta_{2}}$ and $\frac{1}{p_{2}}=\frac{1}{p_{1}}-\frac{2\beta_{1}-2\beta_{2}}{n}$, then
$$\|[b,\ V^{\beta_{2}}(-\Delta+V)^{-\beta_{1}}]f\|_{L^{p_{2}}(\mathbb{R}^{n})}\lesssim\|f\|_{L^{p_{1}}(\mathbb{R}^{n})}.$$
\end{itemize}
\end{theorem}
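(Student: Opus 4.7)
The plan is to reduce both estimates to pointwise control by a fractional maximal operator of the appropriate order, mirroring the strategy used in the proof of Theorem \ref{th-4.5}, and then conclude via Lemma \ref{le3.5}. For part (i), I start from
\[
[b, (-\Delta+V)^{-\beta_{1}}V^{\beta_{2}}]f(x) = \int_{\mathbb{R}^{n}} K_{\beta_{1}}(x,y)\, V(y)^{\beta_{2}}\,(b(x)-b(y))\, f(y)\,dy,
\]
fix $r = 1/m_{V}(x)$, decompose $\mathbb{R}^{n}=\bigcup_{k\in\mathbb{Z}} A_{k}$ with $A_{k}=\{y:2^{k-1}r<|x-y|\leq 2^{k}r\}$, and apply Lemma \ref{le3.2} on each annulus. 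On $A_{k}$ I split $b(x)-b(y) = (b(x)-b_{B_{k}}) + (b_{B_{k}}-b(y))$ with $B_{k}=B(x,2^{k}r)$, pulling $|b(x)-b_{B_{k}}|$ out of the inner integral and absorbing $|b(y)-b_{B_{k}}|$ into the integrand.

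The second step is to apply H\"older's inequality exactly as in the proof of Theorem \ref{th-4.5}, writing
\[
\int_{A_{k}} V(y)^{\beta_{2}} |b(y)-b_{B_{k}}|\, |f(y)|\, dy \lesssim |B_{k}|\Big(\tfrac{1}{|B_{k}|}\int_{B_{k}} V\Big)^{\beta_{2}} \Big(\tfrac{1}{|B_{k}|}\int_{B_{k}}|b-b_{B_{k}}|^{\tau}\Big)^{1/\tau} \Big(\tfrac{1}{|B_{k}|}\int_{B_{k}}|f|^{(s/\beta_{2})'}\Big)^{1/(s/\beta_{2})'},
\]
where $\tau$ is the remaining conjugate exponent; the middle factor is controlled by $\|b\|_{BMO}$ by John--Nirenberg, and the factor $|b(x)-b_{B_{k}}|$ coming from the first piece of the BMO splitting is bounded by $C|k|\|b\|_{BMO}+|b(x)-b_{B_{0}}|$ via the usual telescoping of averages across the nested balls $B_{0},B_{1},\ldots,B_{k}$.

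The third step is to insert these bounds into the dyadic sum, using the decay $(1+2^{k}rm_{V}(x))^{-N}$ from Lemma \ref{le3.2}, and controlling $\big(|B_{k}|^{-1}\int_{B_{k}} V\big)^{\beta_{2}}$ by $(2^{k})^{(\log_{2}C_{0})\beta_{2}} r^{-2\beta_{2}}$ for $k\geq 0$ (from the doubling in \eqref{eq-double}) and by $(2^{k})^{2-n/s}r^{-2\beta_{2}}$ for $k\leq 0$ (from Lemma \ref{le3.3.0}), exactly as in the proof of Theorem \ref{th-4.5}. Taking $N$ large enough absorbs both the polynomial factor in $V$ and the linear factor $|k|$ coming from the telescoping, so the geometric series converges to give the pointwise estimate
\[
\big|[b,(-\Delta+V)^{-\beta_{1}}V^{\beta_{2}}]f(x)\big| \lesssim \|b\|_{BMO}\, M_{2(\beta_{1}-\beta_{2}),(s/\beta_{2})'}f(x).
\]
Lemma \ref{le3.5} with $\sigma = 2(\beta_{1}-\beta_{2})$ and $\gamma=(s/\beta_{2})'$ then yields (i), since $s/(s-\beta_{2})<p_{1}<n/(2\beta_{1}-2\beta_{2})$ is equivalent to $(s/\beta_{2})'<p_{1}<n/\sigma$.

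For part (ii) the kernel is instead $V(x)^{\beta_{2}}K_{\beta_{1}}(x,y)$, so $V(x)^{\beta_{2}}$ sits outside the integral. Running the same dyadic decomposition and BMO splitting, but now controlling $V(x)^{\beta_{2}}$ against $(|B_{k}|^{-1}\int_{B_{k}}V)^{\beta_{2}}$ by a bound of Sugano type (the pointwise inequality displayed just before Theorem \ref{th-4.5}), I obtain
\[
\big|[b,V^{\beta_{2}}(-\Delta+V)^{-\beta_{1}}]f(x)\big| \lesssim \|b\|_{BMO}\, M_{2(\beta_{1}-\beta_{2}),1}f(x),
\]
and another application of Lemma \ref{le3.5}, this time for $1<p_{1}<n/(2\beta_{1}-2\beta_{2})$ with $1<p_{2}<s/\beta_{2}$, finishes the proof. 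The main obstacle will be the bookkeeping in step three: the telescoping constant $|k|$ from the BMO difference must be dominated by the decay, so $N$ must be chosen strictly larger than $(\log_{2}C_{0}+1)\beta_{2}$ plus a safety margin (as already required in the proof of Theorem \ref{th-4.5}). A secondary subtlety is that the stated inequality in (ii) is missing the factor $\|b\|_{BMO}$; this is almost surely a typographical omission and the argument naturally produces that factor.
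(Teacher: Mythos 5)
Your approach diverges from the paper's: the paper proves (i) by the operator identity $[b,L^{\beta_{2}-\beta_{1}}T_{\beta_{2}}]=[b,L^{\beta_{2}-\beta_{1}}]T_{\beta_{2}}+L^{\beta_{2}-\beta_{1}}[b,T_{\beta_{2}}]$ with $T_{\beta_{2}}=(-\Delta+V)^{-\beta_{2}}V^{\beta_{2}}$, reducing everything to the known $L^{p}$ boundedness of the fractional integral, of $T_{\beta_{2}}$, and of their commutators, and then deduces (ii) by duality. Your route, via a pointwise bound by a fractional maximal function, contains a genuine gap. The claimed estimate
\[
\big|[b,(-\Delta+V)^{-\beta_{1}}V^{\beta_{2}}]f(x)\big|\lesssim\|b\|_{BMO}\,M_{2(\beta_{1}-\beta_{2}),(s/\beta_{2})'}f(x)
\]
is false. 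After the splitting $b(x)-b(y)=(b(x)-b_{B_{k}})+(b_{B_{k}}-b(y))$ and the telescoping $|b_{B_{k}}-b_{B_{0}}|\lesssim|k|\,\|b\|_{BMO}$, you are left with the base term $|b(x)-b_{B_{0}}|$ with $B_{0}=B(x,1/m_{V}(x))$. This factor is independent of $k$, so no choice of $N$ absorbs it, and for unbounded $b\in BMO$ it is an unbounded function of $x$; what your computation actually yields is $\big(\|b\|_{BMO}+|b(x)-b_{B_{0}(x)}|\big)M_{\sigma,\gamma}f(x)$. This is exactly the reason commutators are \emph{not} pointwise dominated by maximal operators (and why they fail the weak-type endpoint bounds that such a domination would imply): already for $[b,I_{\alpha}]$ with $b=\log|x|$ and $f\geq 0$ compactly supported, the ratio $|[b,I_{\alpha}]f(x)|/M_{\alpha}f(x)$ grows like $\log|x|$. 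Handling the leftover term requires a separate argument — e.g.\ a finite-overlap covering of $\mathbb{R}^{n}$ by critical balls $B(x_{j},1/m_{V}(x_{j}))$ together with John--Nirenberg and H\"older on each ball against $M_{\sigma,\gamma'}f$ for some $\gamma'\in(\gamma,p_{1})$, or the sharp-maximal-function route of Str\"omberg — none of which is supplied by the lemmas you invoke.

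A secondary, fixable flaw: in your three-factor H\"older step the exponents $s/\beta_{2}$ on $V^{\beta_{2}}$ and $(s/\beta_{2})'$ on $f$ already exhaust the budget, since $\beta_{2}/s+1/(s/\beta_{2})'=1$; there is no room for a finite $\tau$ on $|b-b_{B_{k}}|$, and $\tau=\infty$ is not controlled by $BMO$. You must take the exponent on $f$ strictly larger than $(s/\beta_{2})'$ (possible because $p_{1}>s/(s-\beta_{2})$ is strict, or by the self-improvement $V\in B_{s+\epsilon}$), which changes the maximal operator to $M_{2(\beta_{1}-\beta_{2}),\gamma}$ with $(s/\beta_{2})'<\gamma<p_{1}$. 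Your observations that $(s/\beta_{2})'=s/(s-\beta_{2})$ and that $\|b\|_{BMO}$ is missing from the statement of (ii) are correct, but as written the proposal does not establish the theorem.
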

\begin{proof}
We only prove (i). (ii) can be obtained by duality. Because $\beta_{2}\leq\beta_{1}$, we can decompose the operator $(-\Delta+V)^{-\beta_{1}}V^{\beta_{2}}$ as
$$(-\Delta+V)^{-\beta_{1}}V^{\beta_{2}}=(-\Delta+V)^{\beta_{2}-\beta_{1}}(-\Delta+V)^{-\beta_{2}}V^{\beta_{2}}.$$
Denote by $L^{\beta_{2}-\beta_{1}}$ and $T_{\beta_{2}}$ the operators $(-\Delta+V)^{\beta_{2}-\beta_{1}}$ and $(-\Delta+V)^{-\beta_{2}}V^{\beta_{2}}$, respectively. Then we can get
\begin{eqnarray*}
&&[b,\ (-\Delta+V)^{-\beta_{1}}V^{\beta_{2}}]f(x)\\
&&=[b,\ (-\Delta+V)^{\beta_{2}-\beta_{1}}(-\Delta+V)^{-\beta_{2}}V^{\beta_{2}}]f(x)\\
&&= bL^{\beta_{2}-\beta_{1}}T_{\beta_{2}}f(x)-L^{\beta_{2}-\beta_{1}}T_{\beta_{2}}(b f)(x)\\
&&=bL^{\beta_{2}-\beta_{1}}T_{\beta_{2}}f(x)-L^{\beta_{2}-\beta_{1}}(bT_{\beta_{2}}f(x))+L^{\beta_{2}-\beta_{1}}(bT_{\beta_{2}}f(x))
-L^{\beta_{2}-\beta_{1}}T_{\beta_{2}}(b f)(x)\\
&&=[b,\ L^{\beta_{2}-\beta_{1}}]T_{\beta_{2}}f(x)+L^{\beta_{2}-\beta_{1}}[b, T_{\beta_{2}}]f(x).
\end{eqnarray*}
By (1) of Corollary \ref{coro3.6}, we can get
\begin{eqnarray*}
\Big\|\Big[b,\ (-\Delta+V)^{-\beta_{1}}V^{\beta_{2}}\Big]f\Big\|_{L^{p_{2}}(\mathbb{R}^{n})}
&\lesssim&\Big\|\Big[b,\ L^{\beta_{2}-\beta_{1}}\Big]T_{\beta_{2}}f\Big\|_{L^{p_{2}}(\mathbb{R}^{n})}+\Big\|L^{\beta_{2}-\beta_{1}}\Big[b,\ T_{\beta_{2}}\Big]f\Big\|_{L^{p_{2}}(\mathbb{R}^{n})}\\
&\lesssim&\Big\|T_{\beta_{2}}f\Big\|_{L^{p_{1}}(\mathbb{R}^{n})}+\Big\|\Big[b,\ T_{\beta_{2}}\Big]f\Big\|_{L^{p_{1}}(\mathbb{R}^{n})}\\
&\lesssim&\|f\|_{L^{p_{1}}(\mathbb{R}^{n})}.
\end{eqnarray*}
This completes the proof.
\end{proof}
In the rest of this section, we prove the boundedness of the commutators $[b,V^{\beta_{2}}(-\Delta+V)^{-\beta_{1}}]$ and $[b,(-\Delta+V)^{-\beta_{1}}V^{\beta_{2}}]$ on $L^{p_{2},q,\lambda}_{\alpha,\theta,V}(\mathbb{R}^{n})$, respectively. 
\begin{theorem}\label{th-5.2}
Suppose that $V\in B_{s}$, $s\geq\frac{n}{2}$,
 $\alpha\in(-\infty,0]$, and $\lambda\in(0,n)$. Let $1<q<\infty$, $1<\beta_{2}\leq\beta_{1}<\frac{n}{2}$ and $1<p_{2}<\frac{s}{\beta_{2}}$ with $\frac{1}{p_{1}}-\frac{1}{p_{2}}=\frac{2\beta_{1}-2\beta_{2}}{n}$.
If  $\frac{\lambda}{q}-\frac{1}{p_{1}}+\frac{2\beta_{1}}{n}<\theta<\frac{\lambda}{q}+1-\frac{1}{p_{1}}$, then for $b\in BMO(\mathbb{R}^{n})$,
$$\|[b,V^{\beta_{2}}(-\Delta+V)^{-\beta_{1}}]f\|_{L^{p_{2},q,\lambda}_{\alpha,\theta,V}}\lesssim \|f\|_{L^{p_{1},q,\lambda}_{\alpha,\theta,V}}\|b\|_{BMO}.$$
\end{theorem}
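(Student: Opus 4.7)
The plan is to imitate the structure used for Theorem \ref{th-operator}, grafting onto it the BMO bookkeeping that appears in Theorem \ref{th-commutator}. Fix a ball $B(x_{0},r)$, set $E_{j}=B(x_{0},2^{j}r)\setminus B(x_{0},2^{j-1}r)$, and decompose $f=\sum_{j\in\mathbb{Z}} f_{j}$ with $f_{j}=f\chi_{E_{j}}$. Split the inner sum over $j$ into the three regimes $j\le k-2$, $k-1\le j\le k+1$, $j\ge k+2$, producing quantities $B_{1},B_{2},B_{3}$ exactly parallel to $M_{1},M_{2},M_{3}$ in the proof of Theorem \ref{th-operator}.

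The close-by term $B_{2}$ is handled by Theorem \ref{th-bdd-Tbeta}(ii), which gives the $L^{p_{1}}\to L^{p_{2}}$ boundedness of $[b,V^{\beta_{2}}(-\Delta+V)^{-\beta_{1}}]$ with operator norm $\lesssim\|b\|_{BMO}$; summing as in the $M_{2}$ estimate yields $B_{2}\lesssim\|b\|^{q}_{BMO}\|f\|^{q}_{L^{p_{1},q,\lambda}_{\alpha,\theta,V}}$. For the far terms $B_{1}$ and $B_{3}$, I would use the kernel representation of $V^{\beta_{2}}(-\Delta+V)^{-\beta_{1}}$: its kernel is $V(x)^{\beta_{2}} K_{\beta_{1}}(x,y)$, so the commutator has kernel $V(x)^{\beta_{2}}K_{\beta_{1}}(x,y)(b(x)-b(y))$, which by Lemma \ref{le3.2} is dominated by
\[
\frac{C_{N}V(x)^{\beta_{2}}|b(x)-b(y)|}{(1+|x-y|m_{V}(x))^{N}|x-y|^{n-2\beta_{1}}}.
\]
Writing $b(x)-b(y)=(b(x)-b_{2^{k}r})+(b_{2^{k}r}-b(y))$ (or $b_{2^{j}r}$ in the $B_{3}$ regime where $|x-y|\sim 2^{j}r$) splits the integrand into two pieces, each of which is estimated exactly as in the commutator argument in Theorem \ref{th-commutator}, using H\"older's inequality and the John--Nirenberg inequality on the appropriate ball. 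The weight factor $V(x)^{\beta_{2}}$ is then absorbed by applying H\"older in the $x$--variable with exponent $p_{2}$, invoking the $B_{s}$ condition ($\frac{1}{|E_{k}|}\int_{E_{k}}V^{s}\lesssim (\frac{1}{|B_{k}|}\int_{B_{k}}V)^{s}$), and finally bounding the average of $V$ by Lemma \ref{le3.3} to produce a factor $(1+2^{k}rm_{V}(x_{0}))^{(\log_{2}C_{0}+1)\beta_{2}}(2^{k}r)^{-2\beta_{2}}$; the kernel's $N$--th power decay is chosen large enough to eat this factor and still leave a remaining decay $(1+2^{k}rm_{V}(x_{0}))^{N_{1}}$ for some $N_{1}>-\alpha/q$, exactly as in the $M_{1},M_{3}$ estimates.

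Putting it together, $B_{1}$ becomes a sum of the form
\[
\frac{(1+rm_{V}(x_{0}))^{\alpha}}{r^{\lambda n}}\sum_{k\le 0}|E_{k}|^{\lambda}\Bigl(\sum_{j\le k-2}(k-j)\,2^{(j-k)n(\tfrac{\lambda}{q}-\theta-\tfrac{1}{p_{1}}+1)}\Bigr)^{q}\|b\|^{q}_{BMO}\|f\|^{q}_{L^{p_{1},q,\lambda}_{\alpha,\theta,V}},
\]
and similarly $B_{3}$ with exponent $\theta-\frac{\lambda}{q}+\frac{1}{p_{1}}+\frac{2\beta_{1}}{n}$ and an extra $(j-k)$. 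The hypothesis $\frac{\lambda}{q}-\frac{1}{p_{1}}+\frac{2\beta_{1}}{n}<\theta<\frac{\lambda}{q}+1-\frac{1}{p_{1}}$ makes both geometric ratios strictly less than $1$, so the polynomial factors $(k-j)$ and $(j-k)$ coming from John--Nirenberg are harmless; choosing finally $N=[-\alpha/q+1](k_{0}+1)$ completes the bound.

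The main obstacle, and the one place that required thought beyond a routine merge of the two earlier arguments, is the handling of the factor $V(x)^{\beta_{2}}$ inside the commutator's kernel together with the BMO difference $b(x)-b(y)$. One has to apply H\"older in $x$ to strip $V^{\beta_{2}}$ out, then use the $B_{s}$ reverse H\"older inequality and Lemma \ref{le3.3} to trade the $V$--average for a growth factor in $m_{V}(x_{0})$, and only then split $b(x)-b(y)$ via $b_{2^{k}r}$ or $b_{2^{j}r}$; doing the BMO splitting first makes the subsequent H\"older step messy because the $L^{p'}$ norms of $b-b_{B}$ over the annuli have to be paired carefully with $V^{\beta_{2}}$. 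Once the two BMO techniques are applied in this order, both $B_{1}$ and $B_{3}$ reduce to convergent geometric series in $2^{k-j}$ and the conclusion follows.
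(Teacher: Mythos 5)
Your proposal follows essentially the same route as the paper's proof: the same annular decomposition into the three regimes, Theorem \ref{th-bdd-Tbeta}(ii) for the diagonal term, and for the off-diagonal terms the kernel bound of Lemma \ref{le3.2} combined with the splitting $b(x)-b(y)=(b(x)-b_{2^{k}r})+(b_{2^{k}r}-b(y))$, H\"older, John--Nirenberg, the $B_{s}$ reverse H\"older inequality and Lemma \ref{le3.3} to absorb $V(x)^{\beta_{2}}$, leading to the identical geometric series with the extra harmless factors $(k-j)$ and $(j-k)$. The only cosmetic difference is the order in which you apply H\"older in $x$ and the BMO splitting (the paper splits $b$ first and then pairs $V^{\beta_{2}p_{2}}$ with $|b-b_{2^{k}r}|^{p_{2}}$ by a further H\"older), which does not change the argument.
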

\begin{proof}
For any ball $B(x_{0},r)$, we have
$$f(y)=\sum^{\infty}_{j=-\infty}f(y)\chi_{E_{j}}(y)=\sum^{\infty}_{j=-\infty}f_{j}(y),$$
where $E_{j}=B(x_{0},2^{j}r)\backslash B(x_{0},2^{j-1}r)$. Hence, we have
\begin{eqnarray*}
&&(1+rm_{V}(x_{0}))^{\alpha}r^{-\lambda n}\sum^{0}_{k=-\infty}|E_{k}|^{\theta q}\Big\|\chi_{k}\Big[b,V^{\beta_{2}}(-\Delta+V)^{-\beta_{1}}\Big]f\Big\|^{q}_{L^{p_{2}}(\mathbb{R}^{n})}\\
&&\lesssim (1+rm_{V}(x_{0}))^{\alpha}r^{-\lambda n}\sum^{0}_{k=-\infty}|E_{k}|^{\theta q}\left(\sum^{k-2}_{j=-\infty}\Big\|\chi_{k}\Big[b,V^{\beta_{2}}(-\Delta+V)^{-\beta_{1}}\Big]f_{j}\Big\|_{L^{p_{2}}(\mathbb{R}^{n})}\right)^{q}\\
&&+(1+rm_{V}(x_{0}))^{\alpha}r^{-\lambda n}\sum^{0}_{k=-\infty}|E_{k}|^{\theta q}\left(\sum^{k+1}_{j=k-1}\Big\|\chi_{k}\Big[b,V^{\beta_{2}}(-\Delta+V)^{-\beta_{1}}\Big]f_{j}\Big\|_{L^{p_{2}}(\mathbb{R}^{n})}\right)^{q}\\
&&+(1+rm_{V}(x_{0}))^{\alpha}r^{-\lambda n}\sum^{0}_{k=-\infty}|E_{k}|^{\theta q}\left(\sum^{\infty}_{j=k+2}\Big\|\chi_{k}\Big[b,V^{\beta_{2}}(-\Delta+V)^{-\beta_{1}}\Big]f_{j}\Big\|_{L^{p_{2}}(\mathbb{R}^{n})}\right)^{q}\\
&&=:D_{1}+D_{2}+D_{3}.
\end{eqnarray*}

For $D_{2}$, by (ii) of Theorem \ref{th-bdd-Tbeta}, we have
\begin{eqnarray*}
D_{2}&\lesssim& (1+rm_{V}(x_{0}))^{\alpha}r^{-\lambda n}\sum^{0}_{k=-\infty}|E_{k}|^{\theta q}\left(\sum^{k+1}_{j=k-1}\|f_{j}\|_{L^{p_{1}}(\mathbb{R}^{n})}\right)^{q}\|b\|^{q}_{BMO}\\
&\lesssim& \|f\|^{q}_{L^{p_{1},q,\lambda}_{\alpha,\theta,V}}\|b\|^{q}_{BMO}.
\end{eqnarray*}

For $D_{1}$, by Lemmas \ref{le3.3} \& \ref{le3.2}, we obtain
\begin{eqnarray*}
&&\|\chi_{k}[b,V^{\beta_{2}}(-\Delta+V)^{-\beta_{1}}]f_{j}\|_{L^{p_{2}}(\mathbb{R}^{n})}\\
&&\lesssim \frac{1}{(1+2^{k}rm_{V}(x_{0}))^{N/k_{0}+1}}\frac{1}{(2^{k}r)^{n-2{\beta_{1}}}}
\left(\int_{E_{k}}\Big|\int_{E_{j}}V^{\beta_{2}}(x)(b(x)-b(y))f(y)dy\Big|^{p_{2}}dx\right)^{\frac{1}{p_{2}}}\\
&&\lesssim\frac{1}{(1+2^{k}rm_{V}(x_{0}))^{N/k_{0}+1}}\frac{1}{(2^{k}r)^{n-2{\beta_{1}}}}
\Big[\Big(\int_{E_{k}}V^{\beta_{2}p_{2}}(x)|b(x)-b_{2^{k}r}|^{p_{2}}dx\Big)^{\frac{1}{p_{2}}}\int_{E_{j}}|f(y)|dy\\
&&+\Big(\int_{E_{k}}V^{\beta_{2}p_{2}}(x)dx\Big)^{\frac{1}{p_{2}}}\int_{E_{j}}|b(y)-b_{2^{k}r}||f(y)|dy\Big]\\
&&\lesssim\frac{\|b\|_{BMO}}{(1+2^{k}rm_{V}(x_{0}))^{N/k_{0}+1}}\frac{1}{(2^{k}r)^{n-2{\beta_{1}}}}
\Big[\Big(\int_{E_{k}}V(x)dx\Big)^{\beta_{2}}|E_{k}|^{\frac{1}{p_{2}}-\beta_{2}}\int_{E_{j}}|f(y)|dy\\
&&\quad +\Big(\int_{E_{k}}V(x)dx\Big)^{\beta_{2}}|E_{k}|^{\frac{1}{p_{2}}-\beta_{2}}|E_{j}|^{1-\frac{1}{p_{1}}}(k-j)\|f_{j}\|_{L^{p_{1}}(\mathbb{R}^{n})}\Big]\\
&&\lesssim\frac{\|b\|_{BMO}}{(1+2^{k}rm_{V}(x_{0}))^{N_{1}}}\frac{k-j}{(2^{k}r)^{n-2{\beta_{1}}}}|E_{k}|^{\frac{1}{p_{2}}-\frac{2\beta_{2}}{n}}|E_{j}|^{1-\frac{1}{p_{1}}}
\|f_{j}\|_{L^{p_{1}}(\mathbb{R}^{n})},
\end{eqnarray*}
where $\frac{1}{p_{1}}-\frac{1}{p_{2}}=\frac{2\beta_{1}-2\beta_{2}}{n}$ and $N_{1}<(N/k_{0}+1)-(\log_{2}C_{0}+1)\beta_{2}$. Since $\frac{\lambda}{q}-\frac{1}{p_{1}}+\frac{2\beta_{1}}{n}<\theta<\frac{\lambda}{q}+1-\frac{1}{p_{1}}$, we obtain
\begin{eqnarray*}
D_{1}&\lesssim& \|b\|^{q}_{BMO}(1+rm_{V}(x_{0}))^{\alpha}r^{-\lambda n}\sum^{0}_{k=-\infty}|E_{k}|^{\theta q}\\
&&\times\left(\sum^{k-2}_{j=-\infty}\frac{1}{(1+2^{k}rm_{V}(x_{0}))^{N_{1}}}\frac{k-j}{(2^{k}r)^{n-2{\beta_{1}}}}|E_{k}|^{\frac{1}{p_{2}}-\frac{2\beta_{2}}{n}}|E_{j}|^{1-\frac{1}{p_{1}}}\|f_{j}\|_{L^{p_{1}}(\mathbb{R}^{n})}\right)^{q}\\
&\lesssim& \|b\|^{q}_{BMO}(1+rm_{V}(x_{0}))^{\alpha}r^{-\lambda n}\sum^{0}_{k=-\infty}|E_{k}|^{\theta q}\\
&&\times\left(\sum^{k-2}_{j=-\infty}\frac{(1+2^{j}rm_{V}(x_{0}))^{-\frac{\alpha}{q}}}{(1+2^{k}rm_{V}(x_{0}))^{N_{1}}}\frac{(2^{j}r)^{\frac{\lambda n}{q}}|E_{j}|^{-\theta}}{(2^{k}r)^{n-2\beta_{1}}}\frac{|E_{k}|^{\frac{1}{p_{2}}-\frac{2\beta_{2}}{n}}}{|E_{j}|^{\frac{1}{p_{1}}-1}}(k-j)\right)^{q}
\|f\|^{q}_{L^{p_{1},q,\lambda}_{\alpha,\theta,V}}\\
&\lesssim& \|b\|^{q}_{BMO}\frac{(1+rm_{V}(x_{0}))^{\alpha}}{r^{\lambda n}}\sum^{0}_{k=-\infty}|E_{k}|^{\lambda}\left(\sum^{k-2}_{j=-\infty}(k-j)2^{(j-k)n(\frac{\lambda}{q}-\theta-\frac{1}{p_{1}}+1)}\right)^{q}
\|f\|^{q}_{L^{p_{1},q,\lambda}_{\alpha,\theta,V}}\\
&\lesssim& \|f\|^{q}_{L^{p_{1},q,\lambda}_{\alpha,\theta,V}}\|b\|^{q}_{BMO}.
\end{eqnarray*}

For $D_{3}$, because $\frac{1}{p_{1}}-\frac{1}{p_{2}}=\frac{2\beta_{1}-2\beta_{2}}{n}$ and $N_{1}<(N/k_{0}+1)-(\log_{2}C_{0}+1)\beta_{2}$, we have
\begin{eqnarray*}
&&\|\chi_{k}[b,V^{\beta_{2}}(-\Delta+V)^{-\beta_{1}}]f_{j}\|_{L^{p_{2}}(\mathbb{R}^{n})}\\
&&\lesssim\frac{1}{(1+2^{j}rm_{V}(x_{0}))^{N/k_{0}+1}}\frac{1}{(2^{j}r)^{n-2{\beta_{1}}}}\left(\int_{E_{k}}|\int_{E_{j}}V(x)^{\beta_{2} }(b(x)-b(y))f(y)dy|^{p_{2}}dx\right)^{\frac{1}{p_{2}}}\\
&&\lesssim \frac{j-k}{(1+2^{j}rm_{V}(x_{0}))^{N_{1}}}|E_{j}|^{\frac{2\beta_{1}}{n}-\frac{1}{p_{1}}}|E_{k}|^{\frac{1}{p_{2}}-\frac{2\beta_{2}}{n}}
\|b\|_{BMO}\|f_{j}\|_{L^{p_{1}}(\mathbb{R}^{n})},
\end{eqnarray*}
where we have used the fact that  $|x-y|\sim2^{j}r$ for $x\in E_{k}$, $y\in E_{j}$ and $j\geq k+2$.
Since $\frac{\lambda}{q}-\frac{1}{p_{1}}+\frac{2\beta_{1}}{n}<\theta<\frac{\lambda}{q}+1-\frac{1}{p_{1}}$, we obtain
\begin{eqnarray*}
D_{3}&\lesssim& (1+rm_{V}(x_{0}))^{\alpha}r^{-\lambda n}\sum^{0}_{k=-\infty}|E_{k}|^{\theta q}\\
&&\times\left(\sum^{\infty}_{j=k+2}\frac{j-k}{(1+2^{j}rm_{V}(x_{0}))^{N_{1}}}|E_{j}|^{\frac{2\beta_{1}}{n}-\frac{1}{p_{1}}}|E_{k}|^{\frac{1}{p_{2}}-\frac{2\beta_{2}}{n}}\|b\|_{BMO}\|f_{j}\|_{L^{p_{1}}(\mathbb{R}^{n})}\right)^{q}\\
&\lesssim& \|b\|^{q}_{BMO}(1+rm_{V}(x_{0}))^{\alpha}r^{-\lambda n}\sum^{0}_{k=-\infty}|E_{k}|^{\theta q}\\
&&\times\left(\sum^{\infty}_{j=k+2}\frac{(1+2^{j}rm_{V}(x_{0}))^{-\frac{\alpha}{q}}(2^{j}r)^{\frac{\lambda n}{q}}|E_{j}|^{-\theta}}{(1+2^{j}rm_{V}(x_{0}))^{N_{1}}}\frac{|E_{k}|^{\frac{1}{p_{2}}-\frac{2\beta_{2}}{n}}}{|E_{j}|^{\frac{2\beta_{1}}{n}-\frac{1}{p_{1}}}}(j-k)\right)^{q}
\|f\|^{q}_{L^{p_{1},q,\lambda}_{\alpha,\theta,V}}\\
&\lesssim& \|b\|^{q}_{BMO}(1+rm_{V}(x_{0}))^{\alpha}r^{-\lambda n}\sum^{0}_{k=-\infty}|E_{k}|^{\lambda}\left(\sum^{\infty}_{j=k+2}(j-k)2^{(k-j)n(\theta-\frac{\lambda}{q}+\frac{1}{p_{1}}+\frac{2\beta_{1}}{n})}\right)^{q}
\|f\|^{q}_{L^{p_{1},q,\lambda}_{\alpha,\theta,V}}\\
&\lesssim& \|f\|^{q}_{L^{p_{1},q,\lambda}_{\alpha,\theta,V}}\|b\|^{q}_{BMO}.
\end{eqnarray*}
Let $N$ large enough. Finally, we  get
$$\Big\|\Big[b,\ V^{\beta_{2}}(-\Delta+V)^{-\beta_{1}}\Big]f\Big\|_{L^{p_{2},q,\lambda}_{\alpha,\theta,V}}\lesssim\|f\|_{L^{p_{1},q,\lambda}_{\alpha,\theta,V}}\|b\|_{BMO}.$$
\end{proof}

\begin{theorem}\label{th-5.3}
Suppose that $V\in B_{s}$, $s\geq\frac{n}{2}$ and $b\in BMO(\mathbb{R}^{n})$. Let $\alpha\in (-\infty,0]$, $\lambda\in (0,n)$ and $1<q<\infty.$
If $0<\beta_{2}\leq\beta_{1}<\frac{n}{2}$ , $\frac{s}{s-\beta_{2}}<p_{1}<\frac{n}{2\beta_{1}-2\beta_{2}}$ , $\frac{1}{p_{2}}=\frac{1}{p_{1}}-\frac{2\beta_{1}-2\beta_{2}}{n}$ , $\frac{\lambda}{q}-\frac{1}{p_{2}}<\theta<\frac{\lambda}{q}-\frac{1}{p_{2}}+1-\frac{2\beta_{1}}{n}$, then
$$ \Big\|\Big[b,\ (-\Delta+V)^{-\beta_{1}}V^{\beta_{2}}\Big]f\Big\|_{L^{p_{2},q,\lambda}_{\alpha,\theta,V}}\lesssim \|f\|_{L^{p_{1},q,\lambda}_{\alpha,\theta,V}}\|b\|_{BMO}.$$
\end{theorem}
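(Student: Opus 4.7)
The plan is to follow the same pattern used in the proof of Theorem \ref{th-5.2}, replacing the ingredients tailored to $V^{\beta_{2}}(-\Delta+V)^{-\beta_{1}}$ by those appropriate for its ``dual'' $(-\Delta+V)^{-\beta_{1}}V^{\beta_{2}}$: namely, the $L^{p_{1}}\to L^{p_{2}}$ bound for the commutator given by Theorem \ref{th-bdd-Tbeta}(i), the pointwise kernel estimate of Lemma \ref{le3.2}, and the $B_{s}$-reverse H\"older inequality combined with Lemma \ref{le3.3} to handle $V^{\beta_{2}}$.

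For a fixed ball $B(x_{0},r)$, write $f=\sum_{j\in\mathbb{Z}}f_{j}$ with $f_{j}=f\chi_{E_{j}}$ and decompose
$\|\chi_{k}[b,(-\Delta+V)^{-\beta_{1}}V^{\beta_{2}}]f\|_{L^{p_{2}}}$ into contributions from $j\le k-2$, $k-1\le j\le k+1$, and $j\ge k+2$. Inserting this into the Morrey sum gives three terms $G_{1}+G_{2}+G_{3}$ parallel to $L_{1},L_{2},L_{3}$ in the proof of Theorem \ref{th-dual-operator}. The middle term $G_{2}$ is controlled directly by Theorem \ref{th-bdd-Tbeta}(i), which gives the $L^{p_{1}}\to L^{p_{2}}$ bound for $[b,(-\Delta+V)^{-\beta_{1}}V^{\beta_{2}}]$; the factor $\|b\|_{BMO}$ is built in there.

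For $G_{1}$ ($j\le k-2$), fix $x\in E_{k}$ and $y\in E_{j}$, so $|x-y|\sim 2^{k}r$. Lemma \ref{le3.2} gives
\begin{equation*}
|\chi_{k}[b,(-\Delta+V)^{-\beta_{1}}V^{\beta_{2}}]f_{j}(x)|\lesssim \frac{\chi_{E_{k}}(x)}{(1+2^{k}rm_{V}(x_{0}))^{N/(k_{0}+1)}(2^{k}r)^{n-2\beta_{1}}}\int_{E_{j}}|b(x)-b(y)|V(y)^{\beta_{2}}|f(y)|\,dy.
\end{equation*}
I split $b(x)-b(y)=(b(x)-b_{2^{k}r})-(b(y)-b_{2^{k}r})$. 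The first piece yields $\|b(\cdot)-b_{2^{k}r}\|_{L^{p_{2}}(E_{k})}\lesssim |E_{k}|^{1/p_{2}}\|b\|_{BMO}$ by John--Nirenberg, and the factor $\int_{E_{j}}V^{\beta_{2}}|f|$ is estimated by H\"older's inequality with exponents $p_{1},p_{1}'$:
\begin{equation*}
\int_{E_{j}}V^{\beta_{2}}|f|\le \Bigl(\int_{E_{j}}V^{\beta_{2}p_{1}'}\Bigr)^{1/p_{1}'}\|f_{j}\|_{L^{p_{1}}}.
\end{equation*}
Here the assumption $p_{1}>s/(s-\beta_{2})$ guarantees $\beta_{2}p_{1}'<s$, so the reverse H\"older inequality applies to $V^{\beta_{2}p_{1}'}$ and combined with Lemma \ref{le3.3} gives
\begin{equation*}
\Bigl(\int_{E_{j}}V^{\beta_{2}p_{1}'}\Bigr)^{1/p_{1}'}\lesssim |E_{j}|^{1/p_{1}'}(2^{j}r)^{-2\beta_{2}}(1+2^{j}rm_{V}(x_{0}))^{C\beta_{2}}.
\end{equation*}
The second piece $(b(y)-b_{2^{k}r})$ is absorbed by Hölder and the standard BMO estimate $\|b(\cdot)-b_{2^{k}r}\|_{L^{q}(B(x_{0},2^{j}r))}\lesssim (k-j)|B_{j}|^{1/q}\|b\|_{BMO}$ (for any $1<q<\infty$), which introduces the harmless linear factor $(k-j)$. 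Assembling yields
\begin{equation*}
\|\chi_{k}[b,(-\Delta+V)^{-\beta_{1}}V^{\beta_{2}}]f_{j}\|_{L^{p_{2}}}\lesssim \frac{(k-j)\|b\|_{BMO}}{(1+2^{k}rm_{V}(x_{0}))^{N_{1}}}\frac{|E_{k}|^{1/p_{2}}|E_{j}|^{1/p_{1}'}}{(2^{k}r)^{n-2\beta_{1}}(2^{j}r)^{2\beta_{2}}}\|f_{j}\|_{L^{p_{1}}},
\end{equation*}
where $N_{1}$ can be chosen arbitrarily large by enlarging $N$. Plugging this into the Morrey sum and multiplying/dividing by the normalization weights $(1+2^{j}rm_{V}(x_{0}))^{-\alpha/q}(2^{j}r)^{\lambda n/q}|E_{j}|^{-\theta}$ (exactly as in Theorem \ref{th-5.2}) reduces $G_{1}$ to a geometric series in $2^{(k-j)n(\theta-\lambda/q+1/p_{2}-1+2\beta_{1}/n)}$; the upper bound $\theta<\lambda/q-1/p_{2}+1-2\beta_{1}/n$ makes the exponent strictly negative in $j-k$ (i.e., positive in $k-j$), so the series sums and produces $\|f\|_{L^{p_{1},q,\lambda}_{\alpha,\theta,V}}\|b\|_{BMO}$.

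For $G_{3}$ ($j\ge k+2$), the analogous argument applies with $|x-y|\sim 2^{j}r$, producing a geometric series in $2^{(k-j)n(\theta-\lambda/q+1/p_{2})}$, whose convergence is ensured by the lower bound $\theta>\lambda/q-1/p_{2}$. The main technical obstacle is the $G_{1}$ estimate: unlike in Theorem \ref{th-5.2}, the factor $V^{\beta_{2}}$ sits on the $y$-side of the kernel, so the reverse H\"older inequality must be invoked at scale $2^{j}r$ (the potentially small scale) rather than $2^{k}r$, and this is precisely why the hypothesis $p_{1}>s/(s-\beta_{2})$ is needed. Once that estimate is in place, combining $G_{1},G_{2},G_{3}$ and taking $N$ large enough yields the desired bound.
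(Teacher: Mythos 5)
Your proposal is correct and follows essentially the same route as the paper's proof: the same three-part dyadic decomposition with the near-diagonal term handled by Theorem \ref{th-bdd-Tbeta}(i), the off-diagonal terms by Lemma \ref{le3.2} together with the splitting $b(x)-b(y)=(b(x)-b_{2^{k}r})-(b(y)-b_{2^{k}r})$, John--Nirenberg, H\"older with exponents $p_{1},p_{1}'$ plus the reverse H\"older inequality and Lemma \ref{le3.3} for the $V^{\beta_{2}}$ factor (your intermediate bound $|E_{k}|^{1/p_{2}}|E_{j}|^{1/p_{1}'}(2^{k}r)^{-(n-2\beta_{1})}(2^{j}r)^{-2\beta_{2}}$ coincides with the paper's $|E_{k}|^{1/p_{2}+2\beta_{1}/n-1}|E_{j}|^{1-1/p_{1}-2\beta_{2}/n}$), and the same geometric series with exponents governed by the two inequalities on $\theta$.
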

\begin{proof}
Similarly, we can decompose $f$ based on an arbitrary ball  $B(x_{0},r)$ as follows.
$$f(y)=\sum^{\infty}_{j=-\infty}f(y)\chi_{E_{j}}(y)=\sum^{\infty}_{j=-\infty}f_{j}(y),$$
where $E_{j}=B(x_{0},2^{j}r)\backslash B(x_{0},2^{j-1}r)$. Hence, we have
\begin{eqnarray*}
&&(1+rm_{V}(x_{0}))^{\alpha}r^{-\lambda n}\sum^{0}_{k=-\infty}|E_{k}|^{\theta q}\Big\|\chi_{k}\Big[b,\ (-\Delta+V)^{-\beta_{1}}V^{\beta_{2}}\Big]f\Big\|^{q}_{L^{p_{2}}(\mathbb{R}^{n})}\\
&&\lesssim (1+rm_{V}(x_{0}))^{\alpha}r^{-\lambda n}\sum^{0}_{k=-\infty}|E_{k}|^{\theta q}\left(\sum^{k-2}_{j=-\infty}\Big\|\chi_{k}\Big[b,\ (-\Delta+V)^{-\beta_{1}}V^{\beta_{2}}\Big]f_{j}\Big\|_{L^{p_{2}}(\mathbb{R}^{n})}\right)^{q}\\
&&+(1+rm_{V}(x_{0}))^{\alpha}r^{-\lambda n}\sum^{0}_{k=-\infty}|E_{k}|^{\theta q}\left(\sum^{k+1}_{j=k-1}\Big\|\chi_{k}\Big[b,\ (-\Delta+V)^{-\beta_{1}}V^{\beta_{2}}\Big]f_{j}\Big\|_{L^{p_{2}}(\mathbb{R}^{n})}\right)^{q}\\
&&+(1+rm_{V}(x_{0}))^{\alpha}r^{-\lambda n}\sum^{0}_{k=-\infty}|E_{k}|^{\theta q}\left(\sum^{\infty}_{j=k+2}\Big\|\chi_{k}\Big[b,\ (-\Delta+V)^{-\beta_{1}}V^{\beta_{2}}\Big]f_{j}\Big\|_{L^{p_{2}}(\mathbb{R}^{n})}\right)^{q}\\
&&=F_{1}+F_{2}+F_{3}.
\end{eqnarray*}

 Applying Theorem \ref{th-bdd-Tbeta}, we can get
\begin{eqnarray*}
F_{2}&\lesssim& \frac{(1+rm_{V}(x_{0}))^{\alpha}}{r^{\lambda n}}\sum^{0}_{k=-\infty}|E_{k}|^{\theta q}\left(\sum^{k+1}_{j=k-1}\|f_{j}\|_{L^{p_{1}}(\mathbb{R}^{n})}\right)^{q}\|b\|^{q}_{BMO}\\
&\lesssim& \|f\|^{q}_{L^{p_{1},q,\lambda}_{\alpha,\theta,V}}\|b\|^{q}_{BMO}.
\end{eqnarray*}

For $F_{1}$,  by H\"older's inequality and the fact that $V\in B_{s}$, we apply  Lemmas \ref{le3.2} \& \ref{le3.3}  to deduce that
\begin{eqnarray*}
&&\|\chi_{k}[b,(-\Delta+V)^{-\beta_{1}}V^{\beta_{2}}]f_{j}\|_{L^{p_{2}}(\mathbb{R}^{n})}\\
&&\lesssim\frac{1}{(1+2^{k}rm_{V}(x_{0}))^{N/k_{0}+1}}\frac{1}{(2^{k}r)^{n-2{\beta_{1}}}}\left(\int_{E_{k}}\mid\int_{E_{j}}(b(x)-b(y))V^{\beta_{2}}(y)f(y)dy\mid^{p_{2}}dx\right)^{\frac{1}{p_{2}}}\\
&&\lesssim\frac{1}{(1+2^{k}rm_{V}(x_{0}))^{N/k_{0}+1}}\frac{1}{(2^{k}r)^{n-2{\beta_{1}}}}\Big[\Big(
\int_{E_{k}}|b(x)-b_{2^{k}r}|^{p_{2}}dx\Big)^{\frac{1}{p_{2}}}\int_{E_{j}}|V^{\beta_{2}}(y)f(y)|dy\\
&&+|E_{k}|^{\frac{1}{p_{2}}}\int_{E_{j}}|b(y)-b_{2^{k}r}||V^{\beta_{2}}(y)f(y)|dy\Big]\\
&&\lesssim \frac{(\int_{E_{j}}V(y)dy)^{\beta_{2}}}{(1+2^{k}rm_{V}(x_{0}))^{N/k_{0}+1}}\frac{k-j}{(2^{k}r)^{n-2{\beta_{1}}}}
|E_{k}|^{\frac{1}{p_{2}}}|E_{j}|^{1-\frac{1}{p_{1}}}\|b\|_{BMO}\|f_{j}\|_{L^{p_{1}}(\mathbb{R}^{n})}\\
&&\lesssim \|b\|_{BMO}\frac{k-j}{(1+2^{k}rm_{V}(x_{0}))^{N_{2}}}|E_{k}|^{\frac{1}{p_{2}}+\frac{2\beta_{1}}{n}-1}|E_{j}|^{1-\frac{1}{p_{1}}-\frac{2\beta_{2}}{n}}
\|f_{j}\|_{L^{p_{1}}(\mathbb{R}^{n})},
\end{eqnarray*}
where $\frac{1}{p_{2}}=\frac{1}{p_{1}}-\frac{2\beta_{1}-2\beta_{2}}{n}$ and $N_{2}<(N/k_{0}+1)-(\log_{2}C_{0}+1)\beta_{2}$. Since $\frac{\lambda}{q}-\frac{1}{p_{2}}<\theta<\frac{\lambda}{q}-\frac{1}{p_{2}}+1-\frac{2\beta_{1}}{n}$, we obtain
\begin{eqnarray*}
F_{1}&\lesssim& \|b\|^{q}_{BMO}(1+rm_{V}(x_{0}))^{\alpha}r^{-\lambda n}\sum^{0}_{k=-\infty}|E_{k}|^{\theta q}\\
&&\times\left(\sum^{k-2}_{j=-\infty}\frac{k-j}{(1+2^{k}rm_{V}(x_{0}))^{N_{2}}}|E_{k}|^{\frac{1}{p_{2}}+\frac{2\beta_{1}}{n}-1}|E_{j}|^{1-\frac{1}{p_{1}}-\frac{2\beta_{2}}{n}}\|f_{j}\|_{L^{p_{1}}(\mathbb{R}^{n})}\right)^{q}\\
&\lesssim& \|b\|^{q}_{BMO}(1+rm_{V}(x_{0}))^{\alpha}r^{-\lambda n}\sum^{0}_{k=-\infty}|E_{k}|^{\theta q}\\
&&\left(\sum^{k-2}_{j=-\infty}\frac{(1+2^{j}rm_{V}(x_{0}))^{-\frac{\alpha}{q}}}{(1+2^{k}rm_{V}(x_{0}))^{N_{2}}}
(2^{j}r)^{\frac{\lambda n}{q}}|E_{j}|^{-\theta}\frac{|E_{j}|^{1-\frac{1}{p_{2}}-\frac{2\beta_{1}}{n}}}{|E_{k}|^{1-\frac{1}{p_{2}}-\frac{2\beta_{1}}{n}}}(k-j)
\right)^{q}\|f\|^{q}_{L^{p_{1},q,\lambda}_{\alpha,\theta,V}}\\
&\lesssim& \|b\|^{q}_{BMO}\frac{(1+rm_{V}(x_{0}))^{\alpha}}{r^{\lambda n}}\sum^{0}_{k=-\infty}|E_{k}|^{\lambda}\left(\sum^{k-2}_{j=-\infty}(k-j)2^{(k-j)n(\theta-\frac{\lambda}{q}+\frac{1}{p_{2}}-1+\frac{2\beta_{1}}{n})}\right)^{q}
\|f\|^{q}_{L^{p_{1},q,\lambda}_{\alpha,\theta,V}}\\
&\lesssim& \|f\|^{q}_{L^{p_{1},q,\lambda}_{\alpha,\theta,V}}\|b\|^{q}_{BMO}.
\end{eqnarray*}

For $F_{3}$,  note that when $x\in E_{k}$, $y\in E_{j}$ and $j\geq k+2$, then $|x-y|\sim2^{j}r$. Similar to $F_{1}$, we have
\begin{eqnarray*}
&&\|\chi_{k}[b,(-\Delta+V)^{-\beta_{1}}V^{\beta_{2}}]f_{j}\|_{L^{p_{2}}(\mathbb{R}^{n})}\\
&&\lesssim\frac{1}{(1+2^{j}rm_{V}(x_{0}))^{N/k_{0}+1}}\frac{1}{(2^{j}r)^{n-2{\beta_{1}}}}\left(\int_{E_{k}}\mid\int_{E_{j}}(b(x)-b(y))V(y)^{\beta_{2}}f(y)dy\mid^{p_{2}}dx\right)^{\frac{1}{p_{2}}}\\
&&\lesssim \frac{j-k}{(1+2^{j}rm_{V}(x_{0}))^{N_{2}}}|E_{k}|^{\frac{1}{p_{2}}}|E_{j}|^{-\frac{1}{p_{2}}}\|f_{j}\|_{L^{p_{1}}(\mathbb{R}^{n})}\|b\|_{BMO},
\end{eqnarray*}
where $\frac{1}{p_{2}}=\frac{1}{p_{1}}-\frac{2\beta_{1}-2\beta_{2}}{n}$ and $N_{2}<(N/k_{0}+1)-(\log_{2}C_{0}+1)\beta_{2}$.
Since $\frac{\lambda}{q}-\frac{1}{p_{2}}<\theta<\frac{\lambda}{q}-\frac{1}{p_{2}}+1-\frac{2\beta_{1}}{n}$, we obtain
\begin{eqnarray*}
F_{3}&\lesssim& \|b\|^{q}_{BMO}(1+rm_{V}(x_{0}))^{\alpha}r^{-\lambda n}\sum^{0}_{k=-\infty}|E_{k}|^{\theta q}\\
&&\times\left(\sum^{\infty}_{j=k+2}\frac{(1+2^{j}rm_{V}(x_{0}))^{-\frac{\alpha}{q}}}{(1+2^{j}rm_{V}(x_{0}))^{N_{2}}}(2^{j}r)^{\frac{\lambda n}{q}}|E_{j}|^{-\theta}\frac{|E_{k}|^{\frac{1}{p_{2}}}}{|E_{j}|^{\frac{1}{p_{2}}}}(j-k)\|f_{j}\|_{L^{p_{1}}(\mathbb{R}^{n})}\right)^{q}\\
&\lesssim& \|b\|^{q}_{BMO}(1+rm_{V}(x_{0}))^{\alpha}r^{-\lambda n}\sum^{0}_{k=-\infty}|E_{k}|^{\lambda}\left(\sum^{\infty}_{j=k+2}(j-k)2^{(k-j)n(\theta-\frac{\lambda}{q}+\frac{1}{p_{2}})}\right)^{q}
\|f\|^{q}_{L^{p_{1},q,\lambda}_{\alpha,\theta,V}}\\
&\lesssim& \|f\|^{q}_{L^{p_{1},q,\lambda}_{\alpha,\theta,V}}\|b\|^{q}_{BMO}.
\end{eqnarray*}
Let $N$ large enough. We finally get
$$\|[b,(-\Delta+V)^{-\beta_{1}}V^{\beta_{2}}]f\|_{L^{p_{2},q,\lambda}_{\alpha,\theta,V}}\lesssim\|f\|_{L^{p_{1},q,\lambda}_{\alpha,\theta,V}}\|b\|_{BMO}.$$
\end{proof}

{\bf Competing interests:} The authors declare that they have no competing interests.

{\bf Authors' contributions:}
All authors read and approved the final manuscript.

{\bf Acknowledgements:} Project supported by NSFC No.11171203; New Teacher's Fund for Doctor Stations, Ministry of Education No.20114402120003;
Guangdong Natural Science Foundation S2011040004131; Foundation for
Distinguished Young Talents in Higher Education of Guangdong, China,
LYM11063.

\end{document}